%%
%% Copyright 2007, 2008, 2009 Elsevier Ltd
%%
%% This file is part of the 'Elsarticle Bundle'.
%% ---------------------------------------------
%%
%% It may be distributed under the conditions of the LaTeX Project Public
%% License, either version 1.2 of this license or (at your option) any
%% later version.  The latest version of this license is in
%%    http://www.latex-project.org/lppl.txt
%% and version 1.2 or later is part of all distributions of LaTeX
%% version 1999/12/01 or later.
%%
%% The list of all files belonging to the 'Elsarticle Bundle' is
%% given in the file `manifest.txt'.
%%

%% Template article for Elsevier's document class `elsarticle'
%% with numbered style bibliographic references
%% SP 2008/03/01
%%
%%
%%
%% $Id: elsarticle-template-num.tex 4 2009-10-24 08:22:58Z rishi $
%%
%%
%%\documentclass[preprint,12pt,3p]{elsarticle}
 
%% Use the option review to obtain double line spacing
% \documentclass[preprint,review,12pt]{elsarticle}

%% Use the options 1p,twocolumn; 3p; 3p,twocolumn; 5p; or 5p,twocolumn
%% for a journal layout:
\documentclass[final,1p,times]{elsarticle}
%% \documentclass[final,1p,times,twocolumn]{elsarticle}
%% \documentclass[final,3p,times]{elsarticle}
%% \documentclass[final,3p,times,twocolumn]{elsarticle}
%% \documentclass[final,5p,times]{elsarticle}
%% \documentclass[final,5p,times,twocolumn]{elsarticle}

%% if you use PostScript figures in your article
%% use the graphics package for simple commands
%% \usepackage{graphics}
%% or use the graphicx package for more complicated commands
%% \usepackage{graphicx}
%% or use the epsfig package if you prefer to use the old commands
%% \usepackage{epsfig}

%% The amssymb package provides various useful mathematical symbols
\usepackage{amssymb}
\usepackage{amsmath}
%% The amsthm package provides extended theorem environments
\usepackage{amsthm}
\usepackage{subcaption}
\usepackage{marginnote}
\usepackage[usenames,dvipsnames]{color}

\usepackage{float}

\def \blue{\color{black}}
\def \red{\color{black}}
\newcommand{\newblue}[1]{ {\color{black} #1} }
\newcommand{\newred}[1]{ {\color{black} #1} }

%\usepackage[nodots]{numcompress}

%% The lineno packages adds line numbers. Start line numbering with
%% \begin{linenumbers}, end it with \end{linenumbers}. Or switch it on
%% for the whole article with \linenumbers after \end{frontmatter}.
%% \usepackage{lineno}

%% natbib.sty is loaded by default. However, natbib options can be
%% provided with \biboptions{...} command. Following options are
%% valid:

%%   round  -  round parentheses are used (default)
%%   square -  square brackets are used   [option]
%%   curly  -  curly braces are used      {option}
%%   angle  -  angle brackets are used    <option>
%%   semicolon  -  multiple citations separated by semi-colon
%%   colon  - same as semicolon, an earlier confusion
%%   comma  -  separated by comma
%%   numbers-  selects numerical citations
%%   super  -  numerical citations as superscripts
%%   sort   -  sorts multiple citations according to order in ref. list
%%   sort&compress   -  like sort, but also compresses numerical citations
%%   compress - compresses without sorting
%%
%% \biboptions{comma,round}

\biboptions{sort}

\graphicspath{ {./images/} }

\newtheorem{Definition}{Definition}[section]
\newtheorem{Assumption}{Assumption}[section]
\newtheorem{Theorem}{Theorem}[section]
\newtheorem{Lemma}{Lemma}[section]

\journal{ArXiv}

\begin{document}

\begin{frontmatter}

\title{Split$-$step Milstein methods for multi-channel stiff stochastic differential systems}

\author[label5]{V. Reshniak\corref{cor1}}
\ead{vr2m@mtmail.mtsu.edu}

\author[label5]{A.Q.M. Khaliq}
\address[label5]{Department of Mathematical Sciences and Center for Computational Science, Middle Tennessee State University, Murfreesboro, TN 37132, USA}
\ead{Abdul.Khaliq@mtsu.edu}

\author[label1]{D.A. Voss}
\address[label1]{Professor Emeritus, Department of Mathematics, Western Illinois University, 1 University Circle, Macomb, IL 61455, USA}
\ead{d-voss1@wiu.edu}
%\ead[url]{author-one-homepage.com}

\author[label6]{G. Zhang}
\address[label6]{Computer Science and Mathematics Division, Oak Ridge National Laboratory, Oak Ridge, TN 37831, USA}
\ead{zhangg@ornl.gov}

\cortext[cor1]{Corresponding author}
%\fntext[label3]{I also want to inform about\ldots}

\begin{abstract}
We consider split-step Milstein methods for the solution of stiff stochastic differential equations with an emphasis on systems driven by multi-channel noise. We show their strong order of convergence and investigate mean-square stability properties for different noise and drift structures. The stability matrices are established in a form convenient for analyzing their impact arising from different deterministic drift integrators. Numerical examples are provided to illustrate the effectiveness and reliability of these methods.
\end{abstract}

\begin{keyword}
%% keywords here, in the form: keyword \sep keyword
Stochastic differential equations \sep split-step method \sep Langevin equations \sep stiff equations \sep multi-channel noise \sep mean-square stability
%% MSC codes here, in the form: \MSC code \sep code
%% or \MSC[2008] code \sep code (2000 is the default)
\end{keyword}

\end{frontmatter}

%%
%% Start line numbering here if you want
%%
% \linenumbers

%% main text
\section{Introduction}
\label{sec1}

%Ordinary differential equations provide a classical tool for mathematical modeling of behavior of idealized dynamical systems. However, a lot of real life phenomena contain uncertainties and stochastic effects which cannot be properly described by classical deterministic models. One way to incorporate stochastisity into the model is to consider deterministic system under the influence of random noise, which leads to necessity of solving stochastic differential equations of the It\^{o} type
%

%In this paper we consider 

%Not surprisingly that in recent years there is a huge interest in stochastic modelling. arise naturally in chemistry, biology, fluid mechanics, particle dynamics, finance, etc.

%It is needless to say that stochastic differential equations 

%Stochastic differential equations appear in mathematical modelling of various applications 

%It is needless to say that stochastic differential equations provide an efficient tool for
It is hard to overestimate the role which stochastic differential equations play in mathematical modelling of phenomena with uncertainties and random effects which cannot be properly described in terms of classical deterministic models. 
{\color{black} For instance, it has been known for a long time that many real-life processes are intrinsically stochastic with multiple sources of randomness. Numerous examples include various applications in electrical circuit engineering \cite{Penski2000}, neuroscience  \cite{laing2009}, gene regulatory networks \cite{Samad2005}, chemistry \cite{Gillespie2007}, biology \cite{Wilkinson2012} and other fields of science and engineering. Often the dynamical behavior of such systems can be effectively modeled by the system of It\^o stochastic differential equations \cite{allen2007}}

\begin{equation}\label{eq:1}
    dX(t) = f(t,X) dt + \sum_{j=1}^m g_j (t,X) dW_j, \quad X(t_0)=X_0, \quad t \in [t_0,T],
\end{equation}
where $X \in \mathbb{R}^d$, $f:[t_0,T] \times \mathbb{R}^d \to \mathbb{R}^d$ is a drift vector, $g=(g_1,g_2,..,g_m):[t_0,T] \times \mathbb{R}^d \to \mathbb{R}^{d \times m}$  is a diffusion matrix and $W(t)$ is an $m$-dimensional Wiener process defined on the complete probability space $(\Omega, \mathcal{F},\mathbb{P})$ with a filtration $\{\mathcal{F}_t\}_{t \geq 0}$ satisfying the usual conditions  \cite{Kloeden1992}. 

This paper is concerned with strong solutions of the system in \eqref{eq:1}. Unfortunately, only a few of such equations can be solved analytically and numerical methods are necessary to obtain approximate solutions. In this paper we use the denotation $X_n$ for the value of numerical approximation of the solution $X_{t_n}$ defined at the nodes of equidistant time discretization $t_n = t_0 + nh, \; n = 0,1,...,N$ with a time step $h = (T-t_0)/N$. As we consider strong numerical solutions we recall the following
\begin{Definition}
(\cite{Kloeden1992})
A discrete approximation $X_n$ converges strongly with order $\gamma \in (0, \infty)$ if there exist a finite constant $K$ and a positive constant $\delta_0$ (independent of h) such that for each $h \in (0,\delta_0)$
\begin{align}\label{eq:2}
    \mathsf{E} \Big[ | X_{t_n}-X_n | \Big] \leq K h^{\gamma}.
\end{align}
\end{Definition}

{\color{black} Stability of the scheme represents another important property of a numerical method illustrating its ability to preserve the qualitative behavior of the solution of the original system. There are two main contributors to the stability properties of SDEs: stiffness and geometry of stochastic perturbations. It should be mentioned that, despite their equal importance, most of the research effort in the literature was concentrated on the study of the stiffness property. }
This seems to be logical but unfair. 
Of course, stiffness has been studied for a long time in the theory of deterministic ODEs and a lot of valuable results have been obtained (see, for example, \cite{Hairer1996}). Besides, stochastic stiffness is a generalization of its deterministic counterpart in the sense that a deterministically stiff problem is also stochastically stiff. 
{\color{black} However, that is where the similarities between deterministic and stochastic systems end. For example, in the above mentioned  biological and chemical networks, main issues arise from the multiscale nature of the underlying problem: presence of multiple timescales and the need to include in the simulation both species that are present in relatively small quantities and should be modeled by a discrete stochastic process, and species that are present in larger quantities and are more efficiently modeled by a deterministic differential equation \cite{Samad2005}. Moreover, chemical components usually interact in a highly nonlinear way. Therefore, small random fluctuations can significantly influence the behavior of the entire system and change its stiffness with uncertainty.}

Stability of stochastic systems also depends on the geometry of random noise and the type of interaction between the drift and diffusion components of SDEs.
\newred{ 
It is well known that noise can be used to stabilize or destabilize deterministic systems. The literature on stabilization~/~destabilization theory is extensive, analysis of the asymptotic stability in almost sure sense can be found in \cite{Mao1994, Appleby2008, Huang2013, Buckwar2010} and mean-square stability of systems with stochastic perturbation was discussed in \cite{Buckwar2010, Higham2006}.
For instance, it was shown in \cite{Higham2006} that under certain conditions imposed on the drift and diffusion geometries even a non-stiff stable deterministic system can be destabilized in the mean-square sense by an infinitesimally small white noise perturbation. 
In the following we will use this example in our analysis of the mean-square asymptotic stability of numerical methods which will discussed later.}
%According to this recent result from the theory of stochastic stabilization and destabilization of ODEs \cite{Mao1994, Appleby2008, Huang2013, Buckwar2010}, it is clear that under certain conditions even non-stiff stable deterministic system can be destabilized with an infinitesimally small noise perturbation \cite{Higham2006}. 

{\color{black}  Additionally, the presence of several independent noise channels by itself might become the source of additional difficulties, both analytical and computational, which are not present in SDEs with a scalar noise.  Of course, it is not difficult to extend algorithms for the case of multiple noise channels but it is known that the stability condition of numerical methods might become very restrictive for an increasing number of noise terms \cite{Buckwar2011}.}

Hence, in view of the above mentioned it should be evident that a novel approach is required for the proper analysis of multi-dimensional stochastic systems.
A classical way to resolve the problem of stiffness is by incorporating implicitness into numerical schemes. However, in the case of SDEs, the straightforward implementation of an implicit approach can lead to unbounded solutions \cite{Kloeden1992}. Various implicit schemes appeared in the literature to overcome this issue. The family of semi-implicit (drift-implicit) methods has been known for a long time \cite{Kloeden1992, Saito1996, Buckwar2006} and is well adapted to the problems with stiff deterministic part. 
{\color{black} For equations where both drift and diffusion parts are stiff, fully implicit schemes can be used  at a cost of higher computational complexity} \cite{Ahmad2009, Omar2011, Wang2012}.
Also, an elegant explicit approach based on a stochastic modification of Chebyshev methods which possess very good stability properties was recently proposed  in \cite{Abdulle2007, Abdulle2008}. 

{\color{black} Split-step methods represent a class of fully implicit methods which allow the incorporation of  implicitness in the stochastic part of the system with relatively little additional cost.
%associated with necessity of solving huge systems of nonlinear equations. 
This feature makes them very attractive for solving equations with multi-dimensional noises.}
% in the sense that properties of the scheme are fully determined by the properties of the corresponding deterministic solver.} 
The first method of this type was introduced by Higham in \cite{Higham2002} as a modification of the classical Euler-Maruyama method. Then Wang {\it{et al.}} in \cite{WANG2009,Wang2010} presented several variants of split-step backward and forward Milstein methods for systems driven by a single noise term.  {\color{black} Also, some valuable results for multi-dimensional SDEs with multiplicative multi-channel noise were obtained recently in \cite{Haghighi2012} where convergence properties of split-step methods with  one-step ODE solvers possessing at least first order were investigated.
However their stability analysis was restricted to a single noise term only and numerical examples included systems with at most two noise channels. 
In this work we explore the trend.  

The paper is organized as follows. In section 2, we provide an overview of existing split-step methods and also introduce the new methods. In section 3, we briefly discuss convergence properties of split-step Milstein methods. In section 4, we give the mean-square stability analysis of the proposed methods and apply results to different test systems. 
Numerical examples in section 5 are used to confirm theoretical results. Example 1 is a benchmark problem for testing convergence rates of the numerical methods. Example 2 illustrates the influence of stiffness on the stability of the system. Finally, example 3 is an application problem.
All the results were computed in FORTRAN 90 on Intel(R) Core(TM) i7-4930K, 3.4 GHz processor along with 16 GB memory.

%Numerical results in section 5 illustrate behavior of the proposed methods in solving stiff multi-channel systems.
%
%
%The contributions of the paper can be briefly summarized as follows:
%
%\begin{itemize} 
%	\item we consider new split-step methods for SDEs with multi-dimensional noise based on a predictor-corrector technique with an implicit Adams-Moulton predictor and corrector analysed for 1-d noise in \cite{Khaliq2014}, namely, the Spilt-Step Adams -Moulton Milstein (SSAMM) method and its modification (MSSAMM);
%	\item we extend the Split-Step Composite $\theta$ - Milstein method (SSCTM) and its modification  (MSSTCM)  of Guo et al. \cite{Guo2013} to multi--dimensional noise and provide rigorous convergence analyses of SSCTM, MSSCTM, SSAMM and MSSAMM schemes;
%	\item we establish stability matrices for SSCTM, MSSCTM, SSAMM and MSSAMM methods in general forms which can be applied to any deterministic drift solvers and any test systems; 
%	\item based on the stability analysis, we show how the splitting technique with proper choice of drift integrator can significantly improve efficiency of the numerical method in application to stiff multi-channel stochastic systems;
%%	and show how main the issues associated with the multi-channel SDEs can be resolved by the proper choice of such solver;
%	\item finally, we apply the proposed schemes to several systems including the chemical Langevin equations.
%\end{itemize}
%}

\section{Split-step methods}
\label{sec2}
%The emphasis of the current paper is on the stochastic systems driven by multi-channel noise. Unfortunately,  there is no available rigorous computational analysis for such systems in the context of split-step methods. 

%Some valuable results for multi-dimensional SDEs with multiplicative multi-channel noise were obtained recently in \cite{Haghighi2012} where convergence properties of split-step methods with an arbitrary one-step ODE solver were investigated.
%% the class of split-step methods with an arbitrary one-step ODE solver of order at least one was shown to have stochastic order of convergence one in the strong sense. 
%However their stability analysis was restricted to a single noise only and numerical examples included systems with at most two noise channels. 
%But as we have already mentioned presence of several independent noise channels might become the source of additional difficulties, both analytical and computational, which are not present in SDEs with a scalar noise. 

%In support of this point we refer to some recent results in the theory of stochastic stabilization and destabilization of ODEs which show clearly that stability properties of stochastic systems are highly dependent on the geometry of the drift and diffusion components of SDEs \cite{Mao1994, Appleby2008, Huang2013, Buckwar2010}. We will address this question in more detail in section \ref{sec:4}. 

In this section we extend existing split-step methods to systems with multi-dimensional noise and introduce the new split-step Adams-Moulton-Milstein method together with its modified analogue.
We start with the first split-step method proposed in the literature - the split-step Euler $\theta$-method 

\begin{align}\label{eq:3}
    &\hat{X}_{n} = X_n + h \Big[ (1-\theta)f(t_n,X_n) + \theta f(t_n,\hat{X}_n) \Big]
    \\ \nonumber
    &X_{n+1} = \hat{X}_n + g(t_n,\hat{X}_n) \Delta W_n.
\end{align}
Ding et al. \cite{Ding2010} investigated the convergence and stability properties of this method under the global Lipschitz condition. The backward Euler variant of this scheme with $\theta=1$ was first introduced and analyzed by Higham et al. in \cite{Higham2002} under one-sided Lipschitz condition.

Though it is known that Euler-type methods always possess bigger stability regions than their Milstein-type counterparts \cite{Buckwar2011}, we are interested in the latter because of their higher rate of strong convergence. With respect to the used splitting technique, split-step Milstein methods can be classified into two families. According to this classification we distinguish between the class of \textit{split-step composite Milstein methods}

\begin{align}\label{eq:4}
    \hat{X}_{n} &= X_n + h \Phi(t_n,t_{n+1},X_n,\hat{X}_{n})
    \\ \nonumber
    X_{n+1} &= \hat{X}_n + \eta \left[ \sum_{j=1}^m g_j(t_n,\hat{X}_n) I_{(j)} + \sum_{j_1=1}^m \sum_{j_2=1}^m L^{j_1} g_{j_2}(t_n,\hat{X}_n) I_{(j_1,j_2)} \right] 
    \\ \nonumber
    &+ (1-\eta) \left[ \sum_{j=1}^m  g_j(t_n,{X}_n) I_{(j)} + \sum_{j_1=1}^m \sum_{j_2=1}^m L^{j_1} g_{j_2}(t_n,X_n) I_{(j_1,j_2)} \right]
\end{align}
and the class of \textit{modified split-step composite Milstein methods}

\begin{align}\label{eq:5}
    \hat{X}_{n} &= X_n + h \left[ \Phi(t_n,t_{n+1},X_n,\hat{X}_{n}) - \frac{1}{2} \left( \eta \sum_{j=1}^m L^{j} g_{j}(t_n,\hat{X}_n) + (1-\eta) \sum_{j=1}^m L^{j} g_{j}(t_n,X_n) \right) \right]
    \\ \nonumber
    X_{n+1} &= \hat{X}_n + \eta \left[ \sum_{j=1}^m g_j(t_n,\hat{X}_n) I_{(j)} + \sum_{j_1=1}^m \sum_{j_2=1}^m L^{j_1} g_{j_2}(t_n,\hat{X}_n) J_{(j_1,j_2)} \right] 
    \\ \nonumber
    &+ (1-\eta) \left[ \sum_{j=1}^m  g_j(t_n,{X}_n) I_{(j)} + \sum_{j_1=1}^m \sum_{j_2=1}^m L^{j_1} g_{j_2}(t_n,X_n) J_{(j_1,j_2)} \right]
\end{align}
where $\Phi(t_n,t_{n+1},X_n,\hat{X}_{n})$ is an increment function of the deterministic ODE solver and $\eta \in [0,1]$ is a parameter.
The latter scheme is obtained by collecting all deterministic terms in the first splitting step of the method using the fact that $I_{(j,j)} = \frac{1}{2} \left( \Big[ \Delta W_n^j \Big]^2 - h \right)$.

\newblue{
\textit{Remark.} In the equations \eqref{eq:4} and \eqref{eq:5} $I_{(j)}$, $I_{(j_1,j_2)}$ and $J_{(j_1,j_2)}$ denote stochastic It\^o and Stratonovich integrals respectively

\begin{align}\label{eq:10}
    I_{(j)} &= \int_{t_n}^{t_n+\Delta t} dW_j(s) \cong \Delta W_n^j = \sqrt{h} \xi^j,
    \\ \nonumber
    I_{(j_1,j_2)} &= \int_{t_n}^{t_n+\Delta t} \left( \int_{t_n}^{s_1} dW_{j_1}(s_2) \right) dW_{j_2}(s_1),
    \\ \nonumber
    I_{(j_1,j_2)} &= J_{(j_1,j_2)}, \quad j_1 \neq j_2,
    \\ \nonumber
    I_{(j,j)} &= J_{(j,j)} - \frac{1}{2} h
\end{align}
where $\xi^j =  N(0,1)$ is a normally distributed random variable. $L^{j}$ is a differential operator defined as follows

\begin{align}\label{eq:11}
    L^{j} = \sum_{k=1}^d g_{j}^k \frac{\partial}{\partial y^k}.
\end{align}

Computational aspects of the efficient evaluation of triple product sums of the form

$$
\sum_{j_1=1}^m \sum_{j_2=1}^m L^{j_1} g_{j_2}(t_n,\hat{X}_n) I_{(j_1,j_2)} = 
\sum_{j_1=1}^m \sum_{j_2=1}^m \sum_{k=1}^d g_{j_1}^k \frac{\partial g_{j_2}(t_n,\hat{X}_n)}{\partial y^k} I_{(j_1,j_2)}
$$

will be discussed in Appendix A.
}

It is obvious that the choice of deterministic solver determines the properties of the above numerical schemes. Here we provide several variants of the first (deterministic) stage which appeared in the literature earlier.
Split-step composite $\theta$-Milstein method (\textbf{SSCTM}) and modified split-step composite $\theta$-Milstein method (\textbf{MSSCTM}) were discussed by Guo et al. \cite{Guo2013} and correspond to the classical $\theta$-method used as deterministic solver

\begin{equation}\label{eq:6}
	\Phi(t_n,t_{n+1},X_n,\hat{X}_{n}) = \theta f(t_n,\hat{X}_n) + (1-\theta)f(t_n,X_n).
\end{equation}

{\color{black} With $\eta = 1$, the drifting split-step backward Milstein method (\textbf{DSSBM}) and the modified backward Milstein method (\textbf{MSSBM}) are implicit variants of methods \eqref{eq:4} and \eqref{eq:5}, respectively, using
$\theta = 1$ in the deterministic solver \eqref{eq:6}.

Deterministic solvers with more than one implicit stage have been considered by various authors. In particular, a two-stage  Rosenbrock method (\textbf{RSB}) considered in \cite{Haghighi2012} has form

\begin{align}\label{eq:8a}
	(I - \gamma h J_f) K_1 &= f(t_n,X_n)
	\\
	(I - \gamma h J_f) K_2 &= f(t_n,X_n + \alpha_{21} K_1) + \gamma_{21} J_f K_1,
\end{align}
with increment function

\begin{align}\label{eq:8b}
	\Phi(t_n,t_{n+1},X_n) = b_1 K_1 + b_2 K_2,
\end{align}
where $J_f$ is a Jacobian matrix and the determining parameters are given in \cite{Haghighi2012}. With $\eta = 1$, method \textbf{RSB} results when $\hat{X}_n$ in \eqref{eq:5} is replaced by

\begin{align}\label{eq:8c}
    \hat{X}_{n} = X_n + h \left[ \Phi(t_n,t_{n+1},X_n) - \frac{1}{2} \sum_{j=1}^m L^{j} g_{j}(t_n,\hat{X}_n) \right].
\end{align}

Finally, we consider a new split-step Adams-Moulton-Milstein (\textbf{SSAMM}) method and its modified analogue (\textbf{MSSAMM}) based on their development in \cite{Khaliq2014} for single channel noise. Their deterministic component is also a two-stage method, but based on the predictor-corrector approach wherein both the predictor and corrector are implicit but share the same Newton iteration matrix, $I-(\frac{1}{2}-\theta)hJ_f$. The method \textbf{SSAMM} replaces $\hat{X}_n$ in \eqref{eq:4} by

\begin{align}\label{eq:8d}
    &\tilde{X}_{n} = X_n + h \Phi_1(t_n,t_{n+1},X_n,\tilde{X}_{n})
    \\ \nonumber
    &\hat{X}_{n} = X_n + h \Phi_2(t_n,t_{n+1},X_n,\tilde{X}_{n},\hat{X}_{n})
\end{align}
where

\begin{align}\label{eq:8e}
	\Phi_1(t_n,t_{n+1},X_n,\tilde{X}_{n}) &=  ( \frac{1}{2} + \theta ) f(t_n,X_n) + ( \frac{1}{2} - \theta ) f(t_n,\tilde{X}_n) 
	\\
	\Phi_2(t_n,t_{n+1},X_n,\tilde{X}_{n},\hat{X}_{n}) &=  \frac{1}{2} f(t_n,X_n) + ( \frac{1}{2}-\theta ) f(t_n,\hat{X}_n) + \theta f(t_n,\tilde{X}_n)
\end{align}
with $\theta = -\frac{1}{2} \pm \frac{1}{\sqrt{2}}$. For details on the construction of deterministic solver and choice of values for parameters $\theta$ we refer to \cite{Voss1989}. In the following \textbf{SSAMM+} will be used for \textbf{SSAMM} method with $\theta = -\frac{1}{2} + \frac{1}{\sqrt{2}}$ while \textbf{SSAMM-} will be used for \textbf{SSAMM} method with $\theta = -\frac{1}{2} - \frac{1}{\sqrt{2}}$. For method \textbf{MSSAMM}, $\hat{X}_{n}$ in \eqref{eq:5} is replaced by

\begin{align}\label{eq:8f}
    &\tilde{X}_{n} = X_n + h \left[ \Phi_1(t_n,t_{n+1},X_n,\tilde{X}_{n}) - \frac{1}{2} \left( \eta \sum_{j=1}^m L^{j} g_{j}(t_n,\tilde{X}_n) + (1-\eta) \sum_{j=1}^m L^{j} g_{j}(t_n,X_n) \right) \right]
    \\ \nonumber
    &\hat{X}_{n} = X_n + h \left[ \Phi_2(t_n,t_{n+1},X_n,\tilde{X}_{n},\hat{X}_{n}) - \frac{1}{2} \left( \eta \sum_{j=1}^m L^{j} g_{j}(t_n,\hat{X}_n) + (1-\eta) \sum_{j=1}^m L^{j} g_{j}(t_n,X_n) \right) \right]
\end{align}
where $\Phi_1$ and $\Phi_2$ are given above.
}

\section{Convergence properties}
\label{sec:3}
In this section we provide convergence analysis of methods \eqref{eq:4} and \eqref{eq:5} {with different deterministic integrators}. Using the proof similar to that in \cite{Haghighi2012}  and \cite{WANG2009} we find that these methods have strong order of convergence 1.0. %Convergence of the methods \eqref{eq:4} and \eqref{eq:5} follows then immediately. 
We accomplish our proof under the usual conditions on $f$ and $g$.

\begin{Assumption}
The functions $f$, $g_j$ and $L^{j_1}g_{j_2}$ for all $j, j_1, j_2 = 1,...,m$  satisfy the global Lipschitz condition for constant $K>0$ and linear growth bound, as follows:
    
\begin{align}\label{eq:12}
    &\Big| f(t,a)-f(t,b) \Big| + \Big| g(t,a)-g(t,b) \Big| 
    + \Big| L^{j_1}g_{j_2}(t,a) - L^{j_1}g_{j_2}(t,b) \Big| \leq K_1 \Big| a-b \Big|,
    \\ \nonumber
    & 
    \forall a,b \in \mathbb{R}^d, \; t \in [t_0,T],
    \\[1em] \label{eq:13}
    &\Big| f(t,a) \Big| + \Big| g(t,a) \Big| + \Big| L^{j_1}g_{j_2}(t,a) \Big| \leq K_2 \Big( 1+ |a| \Big)^{\frac{1}{2}}, 
    \\ \nonumber
    & \forall a \in \mathbb{R}^d, \; t \in [t_0,T].
\end{align}
\end{Assumption}

\newblue{It is worth noting that deterministic steps in the schemes \eqref{eq:4} and \eqref{eq:5} are given by implicit equations and therefore the question of existence and uniqueness of the solution $\hat{X}_n$ arises. However since the first step in the scheme \eqref{eq:4} merely approximates the drift flow of  $X(t),~t~\in~[t_n,~ t_{n+1}]$, existence of this solution follows from the properties of the corresponding ODE solver. For instance, using the fixed point theorem and Assumption 3.1 one may show that for the classical $\theta$-method \eqref{eq:6} a unique solution exists, with probability one, for $K_1 \theta h < 1$ \cite[Theorem 7.2]{Hairer2008}. Moreover, for the same $\theta$-method global Lipschitz continuity of $L^{j_1}g_{j_2}$ implies existence of the solution for the deterministic part of \eqref{eq:5} if $K_1' \theta h < 1$ with $K_1' > K_1$.
Similar results also hold for more general Runge-Kutta and multistep methods; see, for example, \cite{Hairer2008, Hairer1996, Stuart1998}.
}

Now to measure the order of convergence we use the following theorem given by Milstein:

\begin{Theorem}
(\cite{Milstein1995})
Assume for a one-step discrete time approximation $y$ that the local mean error and mean square error for all $N=1,2,...$ and $n=0,2,...,N-1$ satisfy the estimates
\begin{align}\label{eq:14}
    \Big| \mathsf{E} \Big[ X_{n+1} - X_{t_{n+1}} | \mathcal{F}_n \Big] \Big| 
    \leq K \Big( 1 + |X_n|^2 \Big)^{1/2} h^{p_1}
\end{align}
and
\begin{align}\label{eq:15}
    \Big| \mathsf{E} \Big[ \Big( X_{n+1} - X_{t_{n+1}} \Big)^2 | \mathcal{F}_n \Big] \Big|^{1/2} 
    \leq K \Big( 1 + |X_n|^2 \Big)^{1/2} h^{p_2}
\end{align}
with $p_2 \geq \frac{1}{2}$ and $p_1 \geq p_2 + \frac{1}{2}$. Then
\begin{align}\label{eq:16}
    \Big| \mathsf{E} \Big[ \Big( X_{n} - X(t_{n}) \Big)^2 | \mathcal{F}_0 \Big] \Big|^{1/2} 
    \leq K \Big(1 + |X_0|^2 \Big)^{1/2} h^{p_2-1/2}
\end{align}
holds for each $k=0,1,2,...,N$. Here $K$ is independent of $h$ but dependent on the length of the time interval $T-t_0$.
\end{Theorem}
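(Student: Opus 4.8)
\emph{Proof (plan).} The idea is to convert the one-step estimates \eqref{eq:14}--\eqref{eq:15} into a global bound through the classical error recursion. Denote by $X_{s,x}(t)$ the exact solution of \eqref{eq:1} with $X(s)=x$ --- well defined with finite moments under Assumption 3.1 --- and by $X_n$ the given one-step approximation. Put $\epsilon_k = X_k - X(t_k)$ and split the error into a fresh one-step error plus the propagation of the previous error through one step of the exact flow:
\begin{align}\label{eq:errrec}
    \epsilon_k &= \rho_k + \Theta_{k-1}, \\ \nonumber
    \rho_k &:= X_k - X_{t_{k-1},X_{k-1}}(t_k), \qquad \Theta_{k-1} := X_{t_{k-1},X_{k-1}}(t_k) - X_{t_{k-1},X(t_{k-1})}(t_k).
\end{align}
By construction $\rho_k$ is exactly the one-step error to which \eqref{eq:14}--\eqref{eq:15} apply (with $X_n$ there replaced by $X_{k-1}$), while $\Theta_{k-1}$ is the difference of the exact flow over $[t_{k-1},t_k]$ issued from the two nearby initial data $X_{k-1}$ and $X(t_{k-1})$, whose separation is $\epsilon_{k-1}$. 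Since $\rho_k$ and $\Theta_{k-1}$ are both built from the driving increments on $[t_{k-1},t_k]$ they are correlated, and handling this correlation correctly is the one genuine difficulty. All expectations below may be taken conditionally on $\mathcal{F}_0$, i.e. with $X_0$ treated as deterministic.

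Two auxiliary ingredients are needed. First, uniform second moments $\sup_{0\le n\le N}\mathsf{E}|X_n|^2 \le C(1+|X_0|^2)$, which follow from the linear-growth bound \eqref{eq:13} together with \eqref{eq:14}--\eqref{eq:15} by a discrete Gronwall argument. Second, one-step stability and consistency of the exact flow: applying It\^o's formula to the difference of two exact solutions started from nearby data, together with the Lipschitz bounds \eqref{eq:12} and Gronwall, yields
\begin{align}\label{eq:flowest}
    \mathsf{E}\big[\,|\Theta_{k-1}|^2\,\big|\,\mathcal{F}_{t_{k-1}}\big] &\le (1+Ch)\,|\epsilon_{k-1}|^2,
    \\ \nonumber
    \mathsf{E}\big[\,|\Theta_{k-1}-\epsilon_{k-1}|^2\,\big|\,\mathcal{F}_{t_{k-1}}\big] &\le Ch\,|\epsilon_{k-1}|^2,
\end{align}
the second line expressing that over a step of length $h$ the exact flow moves a displacement only by an $O(h^{1/2})$-fraction of itself in mean square.

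Squaring \eqref{eq:errrec} and taking expectations gives $\mathsf{E}|\epsilon_k|^2 = \mathsf{E}|\Theta_{k-1}|^2 + 2\,\mathsf{E}[\Theta_{k-1}\cdot\rho_k] + \mathsf{E}|\rho_k|^2$. The first term is at most $(1+Ch)\,\mathsf{E}|\epsilon_{k-1}|^2$ by \eqref{eq:flowest}, and the third is at most $Ch^{2p_2}$ by \eqref{eq:15} and the moment bound. For the cross term I would write $\Theta_{k-1} = \epsilon_{k-1} + (\Theta_{k-1}-\epsilon_{k-1})$: since $\epsilon_{k-1}$ is $\mathcal{F}_{t_{k-1}}$-measurable, $\mathsf{E}[\epsilon_{k-1}\cdot\rho_k] = \mathsf{E}\big[\epsilon_{k-1}\cdot\mathsf{E}[\rho_k\,|\,\mathcal{F}_{t_{k-1}}]\big]$, which by the \emph{mean} bound \eqref{eq:14}, Cauchy--Schwarz and the moment bound is $\le C(\mathsf{E}|\epsilon_{k-1}|^2)^{1/2}h^{p_1}$; while $\mathsf{E}[(\Theta_{k-1}-\epsilon_{k-1})\cdot\rho_k] \le C(\mathsf{E}|\epsilon_{k-1}|^2)^{1/2}h^{p_2+1/2}$ by Cauchy--Schwarz together with \eqref{eq:15} and \eqref{eq:flowest}. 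Using the hypotheses $p_1\ge p_2+\tfrac12$, $p_2\ge\tfrac12$ and Young's inequality to absorb the two mixed contributions, one arrives at the scalar recursion $\mathsf{E}|\epsilon_k|^2 \le (1+C'h)\,\mathsf{E}|\epsilon_{k-1}|^2 + C'h^{2p_2}$. Since $\epsilon_0=0$ and $Nh=T-t_0$, the discrete Gronwall lemma then gives $\mathsf{E}|\epsilon_n|^2 \le C'(T-t_0)\,e^{C'(T-t_0)}\,h^{2p_2-1}$, which is \eqref{eq:16} after taking square roots, with $K$ depending on $T-t_0$ but not on $h$. The crux is the cross term: it must be controlled through the sharp first-moment bound \eqref{eq:14} --- using only \eqref{eq:15} there would leave a contribution of size $(\mathsf{E}|\epsilon_{k-1}|^2)^{1/2}h^{p_2}$, and the Gronwall step would then deliver only the suboptimal rate $h^{p_2-1}$. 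The moment bounds serve merely to clear the weights $(1+|X_{k-1}|^2)^{1/2}$ on the right-hand sides of \eqref{eq:14}--\eqref{eq:15} once expectations are taken.
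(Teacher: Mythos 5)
Your plan is correct and is essentially the canonical argument of Milstein (1995), which the paper itself does not reproduce but simply cites: the decomposition of the global error into the exact-flow propagation of the accumulated error plus a fresh one-step error, the two flow estimates, and the treatment of the cross term via conditioning on $\mathcal{F}_{t_{k-1}}$ together with the first-moment bound \eqref{eq:14} are exactly the ingredients of the standard proof. Nothing further is needed.
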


{\color{black} To prove the convergence of the methods SSCTM, MSSCTM, SSAMM and MSSAMM we need the following Lemma.

\begin{Lemma}
For the methods \eqref{eq:4} and \eqref{eq:5} \newblue{under assumption 3.1} we have the following estimate
	
\begin{align}\label{eq:15a}
	\Big| \hat{X}_n - X_n \Big| \leq K_{method} h \Big( 1+|X_n|^2 \Big)^{1/2},
\end{align}
where for the methods SSCTM, MSSCTM, SSAMM and MSSAMM we obtain

\begin{eqnarray}
	\nonumber
	&K_{SSCTM}  =  \displaystyle{ \frac{ {K_2}}{1-h \theta {K_1}} },
	\qquad
	&K_{SSAMM}  = \frac{ 2 {K_2}}{2- h {K_1}} ,
	\\[1em] \label{eq:15b}
	&K_{MSSCTM}  = \displaystyle{ \frac{(2+m) {K_2}}{2-h (2 \theta+\eta m ) {K_1}} },
    \qquad
	&K_{MSSAMM}  =  \frac{ (2+m) {K_2}}{2- (h+\eta m ) {K_1}}.
\end{eqnarray}
\end{Lemma}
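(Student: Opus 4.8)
The plan is to treat all four methods with a single device. Fix $n$ and consider the first (deterministic) stage, which expresses $\hat X_n$ --- and, for the two-stage schemes \textbf{SSAMM} and \textbf{MSSAMM}, also an intermediate value $\tilde X_n$ --- through an implicit equation of the form $\hat X_n = X_n + h\sum_i c_i\, f(t_n,Y_i) - \tfrac{h}{2}\sum_i d_i \sum_{j=1}^m L^{j}g_{j}(t_n,Y_i)$, where each $Y_i\in\{X_n,\tilde X_n,\hat X_n\}$, the weights satisfy $\sum_i c_i=1$, and (for the modified schemes only) $\sum_i d_i=1$; for \textbf{SSCTM} and \textbf{SSAMM} the second sum is absent. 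Subtracting $X_n$ and writing every factor evaluated at an implicit argument as $f(t_n,Y_i)=f(t_n,X_n)+(f(t_n,Y_i)-f(t_n,X_n))$, and likewise for $L^{j}g_{j}$, the fact that the $c_i$ sum to one (and, in the modified case, the $d_i$ sum to one) makes the right-hand side collapse to $h f(t_n,X_n)-\tfrac{h}{2}\sum_{j=1}^m L^{j}g_{j}(t_n,X_n)$ plus a finite sum of Lipschitz-controlled remainders of the types $h c_i(f(t_n,Y_i)-f(t_n,X_n))$ and $\tfrac{h}{2}d_i(L^{j}g_{j}(t_n,Y_i)-L^{j}g_{j}(t_n,X_n))$. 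The whole point of this relinearization is that the linear-growth bound \eqref{eq:13} is then applied only to $f(t_n,X_n)$ and to $L^{j}g_{j}(t_n,X_n)$, each dominated by $K_2(1+|X_n|^2)^{1/2}$ with no circular dependence on $|\hat X_n|$, whereas the Lipschitz bound \eqref{eq:12} controls the remainders by terms of the type $(\text{const})\cdot hK_1|\hat X_n-X_n|$ and, in the two-stage case, also $hK_1|\tilde X_n-X_n|$.

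For \textbf{SSCTM} this reads $\hat X_n-X_n = h f(t_n,X_n) + h\theta(f(t_n,\hat X_n)-f(t_n,X_n))$; taking norms and moving the term $h\theta K_1|\hat X_n-X_n|$ to the left gives $(1-h\theta K_1)|\hat X_n-X_n| \le hK_2(1+|X_n|^2)^{1/2}$, which is the constant $K_{SSCTM}$. For \textbf{MSSCTM} the same steps leave, in addition, the term $-\tfrac{h}{2}\sum_{j=1}^m L^{j}g_{j}(t_n,X_n)$, dominated by $\tfrac{hm}{2}K_2(1+|X_n|^2)^{1/2}$, and the extra remainder $\tfrac{h\eta m}{2}K_1|\hat X_n-X_n|$; collecting terms gives $(1-h(\theta+\tfrac{\eta m}{2})K_1)|\hat X_n-X_n| \le h\tfrac{2+m}{2}K_2(1+|X_n|^2)^{1/2}$, i.e.\ $K_{MSSCTM}$ after clearing the factor $2$.

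For the two-stage schemes I would set $Z_n := \max\{|\tilde X_n-X_n|,|\hat X_n-X_n|\}$ and bound both increments against $Z_n$. The first stage has $f$-weights $\tfrac12+\theta$ and $\tfrac12-\theta$, summing to one, so relinearization gives $|\tilde X_n-X_n| \le h|f(t_n,X_n)| + h|\tfrac12-\theta|K_1 Z_n$; the second stage has $f$-weights $\tfrac12,\ \tfrac12-\theta,\ \theta$, again summing to one, so $|\hat X_n-X_n| \le h|f(t_n,X_n)| + h(|\tfrac12-\theta|+|\theta|)K_1 Z_n$. For the admissible parameter $\theta=-\tfrac12+\tfrac1{\sqrt2}$ one has $|\tfrac12-\theta|=1-\tfrac1{\sqrt2}\le\tfrac12$ and $|\tfrac12-\theta|+|\theta|=\tfrac12$, so both increments are at most $h|f(t_n,X_n)|+\tfrac{hK_1}{2}Z_n$; hence $Z_n \le h|f(t_n,X_n)|+\tfrac{hK_1}{2}Z_n$, and therefore $|\hat X_n-X_n| \le Z_n \le \tfrac{h|f(t_n,X_n)|}{1-hK_1/2} \le \tfrac{2K_2}{2-hK_1}\,h(1+|X_n|^2)^{1/2}$, which is $K_{SSAMM}$. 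The modified scheme \textbf{MSSAMM} carries the $-\tfrac12\sum_j L^{j}g_{j}$ blocks along exactly as \textbf{MSSCTM} did, which adds the growth factor $\tfrac{2+m}{2}K_2$ and an $\tfrac{\eta m}{2}K_1$ contribution to the coefficient of $Z_n$, reproducing $K_{MSSAMM}$ (with $1+\eta m$ playing the role that $2\theta+\eta m$ played for \textbf{MSSCTM}).

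Two points need attention. First, every division by a factor of the form $1-(\cdots)hK_1$ is legitimate only while that factor is positive, so the estimate is implicitly a statement for $h$ below a method-dependent threshold --- exactly the smallness already invoked after Assumption~3.1 for the implicit stages to be well posed --- and this restriction should accompany the lemma. Second, the only genuinely delicate part is the constant bookkeeping for \textbf{SSAMM} and \textbf{MSSAMM}: one must use the specific identities among the Adams-Moulton weights (that they sum to one and that $|\tfrac12-\theta|+|\theta|=\tfrac12$ at the prescribed $\theta$) rather than crude triangle-inequality estimates, or else the constant will not reduce to the stated closed form. Everything else is a routine application of the triangle inequality together with \eqref{eq:12}--\eqref{eq:13}.
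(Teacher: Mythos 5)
Your treatment of SSCTM and MSSCTM coincides with the paper's: isolate $h f(t_n,X_n)$ (and, for the modified scheme, $\tfrac{h}{2}\sum_j L^j g_j(t_n,X_n)$) via the linear growth bound \eqref{eq:13}, absorb the Lipschitz remainders in $|\hat X_n - X_n|$ into the left-hand side, and divide; your explicit remark that each such division requires $h$ small enough for the denominator to stay positive is a point the paper leaves implicit. For the two-stage schemes you take a genuinely different route: you relinearize both stages around $X_n$ and close a single inequality for $Z_n=\max\{|\tilde X_n-X_n|,\,|\hat X_n-X_n|\}$, whereas the paper proceeds sequentially, first bounding $|\tilde X_n-X_n|=O(h)$, then $|\hat X_n-\tilde X_n|=O(h^2)$, and finally writing the corrector stage as $f(X_n)+\tfrac12\big(f(\hat X_n)-f(X_n)\big)+\theta\big(f(\tilde X_n)-f(\hat X_n)\big)$, so that the entire $\theta$-dependence multiplies an $O(h^2)$ quantity and is relegated to an $O(h^3)$ correction to the stated constant.

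This difference is where your argument has a gap. Your derivation of $K_{SSAMM}$ hinges on the identity $|\tfrac12-\theta|+|\theta|=\tfrac12$, which holds for $\theta=-\tfrac12+\tfrac1{\sqrt2}$ but fails for the other admissible value $\theta=-\tfrac12-\tfrac1{\sqrt2}$ (the SSAMM- variant, which the paper favors in its experiments): there $|\tfrac12-\theta|+|\theta|=\tfrac32+\sqrt2\approx 2.91$, so your $Z_n$ inequality only yields $|\hat X_n-X_n|\le hK_2\big(1+|X_n|^2\big)^{1/2}\big/\big(1-(\tfrac32+\sqrt2)hK_1\big)$ --- still an $O(h)$ bound of the required form, but not the claimed $K_{SSAMM}=2K_2/(2-hK_1)$. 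To recover the stated constant for both values of $\theta$ you need the paper's grouping, in which $\theta$ multiplies $f(\tilde X_n)-f(\hat X_n)$ rather than $f(\tilde X_n)-f(X_n)$. The same caveat applies to your MSSAMM bookkeeping. Two smaller observations: your denominator $2-h(1+\eta m)K_1$ for $K_{MSSAMM}$ differs from the printed $2-(h+\eta m)K_1$ in \eqref{eq:15b}, and yours is the one consistent with scheme \eqref{eq:5} (the paper appears to drop a factor of $h$); and the paper itself only obtains $K_{SSAMM}$ and $K_{MSSAMM}$ up to an additive $O(h^3)$ term, so for the two-stage methods the constants in \eqref{eq:15b} are leading-order in any case.
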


\begin{proof}
Based on the Assumption 3.1 we obtain

for SSCTM method:
$$
	\Big| \hat{X}_n - X_n \Big| \leq h \Big( \theta | f(\hat{X}_n) - f(X_n) | +| f(X_n) | \Big) \leq \frac{h {K_2}}{1-h \theta {K_1}} \Big( 1+|X_n|^2 \Big)^{1/2},
$$

for MSSCTM method:
\begin{align*}
	\Big| \hat{X}_n - X_n \Big| 
	&\leq h \left( \theta | f(\hat{X}_n) - f(X_n) | +| f_n | + \frac{\eta}{2} \left| \sum_{j=1}^m \left( L^{j}g_{j}(\hat{X}_n) - L^{j}g_{j}(X_n) \right) \right| \right) 
	\\
	&\leq \frac{h \left( 1 + \frac{m}{2} \right) {K_2}}{1-h \theta \left( \theta + \frac{m \eta}{2} \right) {K_1}} \Big( 1+|X_n|^2 \Big)^{1/2},
\end{align*}

for SSAMM method:
\begin{align*}
	\Big| \tilde{X}_n - X_n \Big| 
	\leq h \left( \big| \frac{1}{2}-\theta \big| \big| f(\tilde{X}_n) - f(X_n) \big| + | f_n | \right) 
	\leq \frac{h {K_2}}{1-h \big| \frac{1}{2}-\theta \big| {K_1}} \Big( 1+|X_n|^2 \Big)^{1/2}.
\end{align*}

Then
\begin{align*}
	\Big| \hat{X}_n - \tilde{X}_n \Big| 
	&\leq h \left( \big| \frac{1}{2}-\theta \big| \big| f(\hat{X}_n) - f(\tilde{X}_n) \big| + |\theta| \big| f(X_n) - f(\tilde{X}_n) \big| \right) 
	\\
	&\leq \frac{h |\theta| {K_1}}{1-h \big| \frac{1}{2}-\theta \big| {K_1}} \big|X_n - \tilde{X}_n \big|
	\leq \frac{h^2 |\theta| {K_1 K_2}}{ \big(1-h \big| \frac{1}{2}-\theta \big| {K_1} \big)^2} \Big( 1+|X_n|^2 \Big)^{1/2},
\end{align*}
and finally
\begin{align*}
	\Big| \hat{X}_n - X_n \Big| 
	&\leq h \left( | \theta | \big| f(\hat{X}_n) - f(\tilde{X}_n) \big| + \frac{1}{2} \big| f(X_n) - f(\hat{X}_n) \big| + \big| f(X_n) \big|\right) 
	\\
	&\leq \frac{h |\theta| {K_1}}{1- \frac{h}{2} {K_1}} \big| \hat{X}_n - \tilde{X}_n \big|
	+ \frac{h {K_2}}{1- \frac{h}{2} {K_1}} \Big( 1+|X_n|^2 \Big)^{1/2}
	\\
	&\leq \frac{h {K_2}}{1- \frac{h}{2} {K_1}} \Big( 1+|X_n|^2 \Big)^{1/2} + O(h^3),
\end{align*}

for MSSAMM method:
\begin{align*}
	\Big| \hat{X}_n - X_n \Big| 
	\leq \frac{h \left( 1 + \frac{m}{2} \right) {K_2}}{1- \frac{1}{2} (h+ \eta m) {K_1}} \Big( 1+|X_n|^2 \Big)^{1/2} + O(h^3).
\end{align*}	
	
\end{proof}
}

Then the following theorem establishes convergence of methods \eqref{eq:4} and \eqref{eq:5}.
%To show the convergence rate of SSCTM and MSSCTM methods we will prove the following

\begin{Theorem}
Let $X_n$ be the numerical approximation to $X(t_n)$ at the time $T$ after $k$ steps with step size $h=T/N$, $N=1,2,...$. 
%Suppose that $1-h \theta \sqrt{K_1} > 0$. Then for SSCTM and MSSCTM 
Then for methods \eqref{eq:4} and \eqref{eq:5} with an increment function $\Phi(t_n,t_{n+1},X_n,\hat{X}_{n})$ of the selected ODE solver of order at least one under Assumption 3.1 we have
\begin{align}\label{eq:17}
    \Big| \mathsf{E} \Big[ \Big( y_{k}-X(t_{k}) \Big)^2 | \mathcal{F}_0 \Big] \Big|^{1/2} = O(h).
\end{align}
\end{Theorem}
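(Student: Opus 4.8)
The plan is to invoke Milstein's fundamental theorem (Theorem 3.2) with $p_1 = 2$ and $p_2 = 3/2$, so that \eqref{eq:17} follows with exponent $p_2 - 1/2 = 1$. This reduces the proof to establishing, for a single step of each scheme, the local mean estimate \eqref{eq:14} with $p_1 = 2$ and the local mean-square estimate \eqref{eq:15} with $p_2 = 3/2$. The natural reference point is the classical one-step Milstein map
\begin{align*}
  \bar X_{n+1} = X_n + h f(t_n, X_n) + \sum_{j=1}^m g_j(t_n,X_n) I_{(j)} + \sum_{j_1,j_2=1}^m L^{j_1} g_{j_2}(t_n,X_n) I_{(j_1,j_2)},
\end{align*}
which under Assumption 3.1 is known to satisfy \eqref{eq:14}--\eqref{eq:15} with $p_1 = 2$, $p_2 = 3/2$ (see, e.g., \cite{Kloeden1992, Milstein1995}). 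It then suffices to bound $X_{n+1} - \bar X_{n+1}$, the difference between the split-step update and the Milstein update issued from the same $X_n$, in the conditional $L^1$ and $L^2$ senses demanded by \eqref{eq:14}--\eqref{eq:15}; the split-step schemes inherit $p_1 = 2$, $p_2 = 3/2$ by the triangle inequality, with the $(1+|X_n|^2)^{1/2}$ factors preserved through \eqref{eq:13} and Lemma 3.1.

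For methods \eqref{eq:4}, the discrepancy $X_{n+1} - \bar X_{n+1}$ splits into three kinds of terms: (i) the drift discrepancy $h\bigl[\Phi(t_n,t_{n+1},X_n,\hat X_n) - f(t_n,X_n)\bigr]$; (ii) the diffusion terms $\eta\sum_j\bigl[g_j(t_n,\hat X_n) - g_j(t_n,X_n)\bigr]I_{(j)}$; and (iii) the iterated-integral terms $\eta\sum_{j_1,j_2}\bigl[L^{j_1}g_{j_2}(t_n,\hat X_n) - L^{j_1}g_{j_2}(t_n,X_n)\bigr]I_{(j_1,j_2)}$. For (i), since the selected ODE solver has classical order at least one, its local error against the drift flow of $X$ over $[t_n,t_{n+1}]$ is $O(h^2)$, and $hf(t_n,X_n)$ is the Euler approximation of the same flow, so (i) is $O(h^2)$ pathwise, controlled by $(1+|X_n|^2)^{1/2}$ via \eqref{eq:13} and Lemma 3.1; this is acceptable for both \eqref{eq:14} and \eqref{eq:15}. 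For (ii) and (iii), the Lipschitz bound \eqref{eq:12} together with Lemma 3.1 gives $|g_j(t_n,\hat X_n) - g_j(t_n,X_n)| \le K_1 K_{method}\, h\,(1+|X_n|^2)^{1/2}$, and likewise for $L^{j_1}g_{j_2}$. Since $I_{(j)}$ has conditional second moment $O(h)$ and $I_{(j_1,j_2)}$ has conditional second moment $O(h^2)$, the $L^2$ contributions of (ii) and (iii) are $O(h^{3/2})$ and $O(h^2)$, which suffices for \eqref{eq:15}.

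The delicate point is the first-moment estimate \eqref{eq:14} with $p_1 = 2$: a crude bound on (ii) would yield only $O(h)\cdot O(h^{1/2}) = O(h^{3/2})$, which is not enough. Here I would use the \emph{key structural fact} that the first (drift) sub-step of \eqref{eq:4} — and of \eqref{eq:5}, \eqref{eq:8d}, \eqref{eq:8f}, \eqref{eq:8c} — involves no increment of $W$ on $[t_n,t_{n+1}]$, so $\hat X_n$ (and $\tilde X_n$) is $\mathcal{F}_{t_n}$-measurable. Consequently $\mathsf{E}\bigl[(g_j(t_n,\hat X_n) - g_j(t_n,X_n))I_{(j)}\,\big|\,\mathcal{F}_{t_n}\bigr] = (g_j(t_n,\hat X_n) - g_j(t_n,X_n))\,\mathsf{E}[I_{(j)}\mid\mathcal{F}_{t_n}] = 0$, and $\mathsf{E}[I_{(j_1,j_2)}\mid\mathcal{F}_{t_n}] = 0$ for all $j_1,j_2$ (for $j_1=j_2$ because $I_{(j,j)} = \tfrac12([\Delta W_n^j]^2 - h)$). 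Hence (ii) and (iii) contribute nothing to the conditional mean, and \eqref{eq:14} reduces to the $O(h^2)$ drift discrepancy (i). For the modified family \eqref{eq:5} (and \eqref{eq:8f}, \eqref{eq:8c}) one additionally notes that replacing $I_{(j,j)}$ by the Stratonovich integral $J_{(j,j)} = I_{(j,j)} + \tfrac12 h$ in the diffusion step is exactly compensated by the extra $-\tfrac{h}{2}\sum_j L^j g_j$ inserted into the drift sub-step, so \eqref{eq:5} is algebraically a rearrangement of \eqref{eq:4} up to evaluation points already handled by Lemma 3.1; the two-stage predictor--corrector solvers \eqref{eq:8d}--\eqref{eq:8e} are treated the same way, using the $\tilde X_n$ estimate of Lemma 3.1 to show the predictor stage also differs from $X_n$ by $O(h)$. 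Collecting, every scheme satisfies \eqref{eq:14} with $p_1 = 2$ and \eqref{eq:15} with $p_2 = 3/2$, and Theorem 3.2 yields \eqref{eq:17}. The main obstacle, as indicated, is the first-moment bound, which would fail at the required order were it not for the $\mathcal{F}_{t_n}$-measurability of the intermediate drift stages; a secondary technical point is the uniform-in-$n$ $L^2$ boundedness of $X_n$, which follows from \eqref{eq:13}, Lemma 3.1, and a discrete Gr\"onwall inequality, ensuring the $(1+|X_n|^2)^{1/2}$ factors propagate as Theorem 3.2 requires.
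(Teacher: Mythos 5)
Your proposal is correct and follows essentially the same route as the paper: compare one step of the split-step scheme to the classical Milstein map $X_{n+1}^M$, use Lemma 3.1 to control $\hat X_n - X_n$, exploit the vanishing conditional means of $I_{(j)}$ and $I_{(j_1,j_2)}$ to get $p_1=2$ and the second-moment bounds $\mathsf{E}[I_{(j)}^2]=O(h)$, $\mathsf{E}[I_{(j_1,j_2)}^2]=O(h^2)$ to get $p_2=3/2$, then invoke Milstein's fundamental theorem. Your explicit justification that the cancellation in the first-moment estimate rests on the $\mathcal{F}_{t_n}$-measurability of $\hat X_n$ (and $\tilde X_n$) is a point the paper leaves implicit, but it is the same argument.
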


\begin{proof}
We try to estimate local mean and mean-square errors \eqref{eq:14} and \eqref{eq:15} first. Denote the local Milstein approximation step as
\begin{align}\label{eq:18}
    X_{n+1}^M = X_n + h f(t_n,X_n) + \sum_{j=1}^m g_j(t_n,X_n) I_{(j)} + \sum_{j_1=1}^m \sum_{j_2=1}^m L^{j_1} g_{j_2}(t_n,X_n) I_{(j_1,j_2)}.
\end{align}

Then we arrive at
\begin{align}\label{eq:19}
    \Big| \mathsf{E} \Big[ X_{n+1} - X_{t_{n+1}} | \mathcal{F}_n \Big] \Big| 
    &\leq 
     \Big| \mathsf{E} \Big[ X_{n+1} - X_{n+1}^M | \mathcal{F}_n \Big] \Big| 
  +  \Big| \mathsf{E} \Big[ X_{n+1}^M - X_{t_{n+1}} |\mathcal{F}_n \Big] \Big| 
    \\ \nonumber
    &\leq K \Big( 1+|X_n|^2 \Big)^{1/2}h^2 + H_1.
\end{align}

{\color{black}
%Now since $X_n + h \Phi(t_n,t_{n+1},X_n,\hat{X}_{n}) - \left( X_n + h f(t_n,X_n) \right) \leq O(h^2)$ we have that 
Now from Lemma 3.1 we have that
$$
H_1 = \Big| E \Big[ X_{n+1}^M-X_{n+1} | \mathcal{F}_n \Big] \Big| 
\leq  h^2 K_{method} {K_1} \Big( 1+|X_n|^2 \Big)^{1/2}.
$$

For example, for method SSAMM using the property of It\^o integrals $\mathsf{E}(I_{(j)})=\mathsf{E}(I_{(j_1,j_2)})=0$ we obtain
%In particular, for methods SSCTM, MSSCTM and SSAMM by Assumption 3.1 and using the property $\mathsf{E}(I_{(j)})=\mathsf{E}(I_{(j_1,j_2)})=0$ we obtain

%\begin{align*}
%&\textit{SSCTM: }
%    H_1 
%    =  h \theta \Big| f(t_n,X_n) - f(t_n,\hat{X}_n) \Big|
%    \leq h^2 K_{SSCTM} \sqrt{K_1} \Big( 1+|X_n|^2 \Big)^{1/2}
%\\[1em]
%&\textit{MSSCTM: }
%    H_1 
%    =  h \theta \Big| f(t_n,X_n) - f(t_n,\hat{X}_n) \Big|
%    \leq  h^2 K_{MSSCTM} \sqrt{K_1} \Big( 1+|X_n|^2 \Big)^{1/2}
%\end{align*}

\begin{align*}
%\textit{SSAMM: }
    H_1 &\leq h \left( \frac{1}{2} \Big| f(t_n,X_n) - f(t_n,\hat{X}_n) \Big| + \theta \Big| f(t_n,\hat{X}_n) - f(t_n,\tilde{X}_{n}) \Big| \right)
    \\
    &\leq
     \frac{h^2}{2} K_{SSAMM} {K_1} \Big( 1+|X_n|^2 \Big)^{1/2} + O(h^4).
%\\[1em]
%\textit{MSSAMM: }
%    H_1 &\leq h \left( \frac{1}{2} \Big| f(t_n,X_n) - f(t_n,\hat{X}_n) \Big| + \theta \Big| f(t_n,\hat{X}_n) - f(t_n,\tilde{X}_{n}) \Big| \right)
%    \\
%    &
%    \leq
%    h^2 \frac{1}{2} K_{MSSAMM} \sqrt{K_1} \Big( 1+|X_n|^2 \Big)^{1/2}.
\end{align*}

Hence condition \eqref{eq:14} is satisfied with $p_1=2$. On the other hand we have

\begin{align}\label{eq:20}
    &\Big| X_{n+1}^M - X_{n+1} \Big| 
    \\ \nonumber
    &\leq  h^2 \big( K + K_{method} {K_1} \big) \Big( 1+|X_n|^2 \Big)^{1/2} 
    \\ \nonumber
    &+ \eta \left| \sum_{j=1}^m ( g_j(t_n,X_n)-g_j(t_n,\hat{X}_n)) I_{(j)} 
    + \sum_{j_1=1}^m \sum_{j_2=1}^m (L^{j_1} g_{j_2}(t_n,X_n)-L^{j_1} g_{j_2}(t_n,\hat{X}_n)) I_{(j_1,j_2)} \right|
    \\ \nonumber
    & \leq h^2 \Big( K + K_{method} {K_1} \Big) \Big( 1+|X_n|^2 \Big)^{1/2} + \eta {K_1} \Big| X_n - \hat{X}_n \Big| 
      \left|  \sum_{j=1}^m \Big| I_{(j)} \Big| + \sum_{j_1=1}^m \sum_{j_2=1}^m \Big| I_{(j_1,j_2)} \Big| \right| 
    \\ \nonumber
     & \leq 
     h \left( \Big( K + K_{method} {K_1} \Big) h + \eta K_{method} K_1^2 
    \left|  \sum_{j=1}^m \Big| I_{(j)} \Big| + \sum_{j_1=1}^m \sum_{j_2=1}^m \Big| I_{(j_1,j_2)} \Big| \right| 
    \right) \Big( 1+|X_n|^2 \Big)^{1/2} .
\end{align}

%Similarly, for MSSCTM we have
%
%\begin{align}\label{eq:20a1}
%    |X_{n+1}^M-X_{n+1}| 
%    \leq \frac{ \frac{3}{2} h \sqrt{K_1 K_2} }{1-h \theta \sqrt{K_1}} \left| h \theta + \eta \sum_{j=1}^m I_{(j)} + \sum_{j_1=1}^m \sum_{j_2=1}^m I_{(j_1,j_2)} \right| (1+|X_n|^2)^{1/2}
%\end{align}

Therefore using the inequality $(a_1+...+a_n)^2 \leq n(a_1^2+...+a_n^2)$ we get

\begin{align*}
    |X_{n+1}^M-X_{n+1}|^2 
    \leq h^2 K' \left( h^2 K'' + \sum_{j=1}^m I_{(j)}^2 + \sum_{j_1=1}^m \sum_{j_2=1}^m I_{(j_1,j_2)}^2 \right)
    \Big( 1+|X_n|^2 \Big) 
\end{align*}
for some positive constants $K'$ and $K''$. From Lemma 5.7.2 in \cite{Kloeden1992} it is known that $\mathsf{E}\Big[ I_{(j)}^2 | \mathcal{F}_n \Big] \leq O(h)$ and $\mathsf{E} \Big[ I_{(j_1,j_2)}^2 | \mathcal{F}_n \Big] \leq O(h^2)$. Thus we obtain the estimate

\begin{align}\label{eq:21}
	\Big| \mathsf{E} \Big[ \Big( X_{n+1}^M-X_{n+1} \Big)^2 | \mathcal{F}_n \Big] \Big|^{1/2} \leq 
	\sqrt{K'} \Big( 1+|X_n|^2 \Big)^{1/2} h^{3/2}.
\end{align}

This completes the proof.
}
\end{proof}

%By analogy, the convergence order of SSAMM method is given by the following
%
%\begin{Theorem}
%Let $X_n$ be the numerical approximation to $X(t_n)$ at the time $T$ after $k$ steps with step size $h=T/N$, $N=1,2,...$. Suppose that $1 - h\sqrt{K_1} \left(|\theta| + \frac{1}{2} \right)  > 0$. Then for SSAMM method \eqref{eq:8} under Assumption 3.1 we have
%\begin{align}\label{eq:20c}
%    |\mathsf{E}[(y_{k}-X(t_{k}))^2|\mathcal{F}_0]|^{1/2} = O(h)
%\end{align}
%\end{Theorem}
%
%\begin{proof}
%Similarly to the proof of Theorem 3.2 we can find that
%\begin{align*}
%    H_1 &= |\mathsf{E}[X_{n+1}^M-X_{n+1}|\mathcal{F}_n]| 
%    \\ \nonumber
%    &=  \sqrt{K_1} \frac{h}{2} \left|X_n - \hat{X}_n \right| + \sqrt{K_1} \theta h \left| \hat{X}_n - \tilde{X}_{n} \right|
%    \leq \frac{h^2 \sqrt{K_1 K_2}}{1-\frac{h \sqrt{K_1}}{2}} \left( 2 + \frac{h \sqrt{K_1} |\theta| \left( |\theta| + \frac{1}{2} \right)}{1 - h\sqrt{K_1} \left(|\theta| + \frac{1}{2} \right) } \right) (1+|X_n|^2)^{1/2}
%\end{align*}
%and
%\begin{align*}
%    |X_{n+1}^M-X_{n+1}|^2 
%    \leq h^2 K'' \left( h^2 \theta^2 + \eta^2 \sum_{j=1}^m I_{(j)}^2 + \sum_{j_1=1}^m \sum_{j_2=1}^m I_{(j_1,j_2)}^2 \right) (1+|X_n|^2)
%\end{align*}
%which together with \eqref{eq:20b} completes the proof.
%\end{proof}

\section{Mean-square stability properties}
\label{sec:4}
%In this section we perform mean-square stability analysis of the proposed numerical schemes. 

Similarly to deterministic ODEs, stability analysis of numerical schemes for SDEs has been mainly focused on the ability of the scheme to preserve qualitative behavior of properly designed scalar linear test equation. We have already mentioned that in stochastic theory this approach is in general not justified because it does not capture the role of the diffusion structure. 
\newred{
It is known that stochastic perturbations destabilize deterministic systems in the mean square sense. For stiff and non-normal systems, geometry of the perturbation can be more important factor than intensity of the noise.
This effect cannot happen in one dimension and therefore is not captured by scalar linear stability analysis \cite{Higham2006, Buckwar2010}.
Buckwar and Kelly in \cite{Buckwar2010} provided several test systems which can be used to illustrate how the stability of the SDEs depend on the interaction of the drift and diffusion geometries.
}

Motivated by these results, we will investigate stability of the following linearized multi-channel system \cite{Buckwar2012a}

\begin{align}\label{eq:22}
	&dX(t) = F X(t) dt + \sum_{r=1}^m G_r X(t) dW_r(t)
	, \quad X \in \mathbb{R}^d
	, \quad F, G_r \in \mathbb{R}^{d \times d},
	\\ \nonumber
	&X(t_0) = X_0	.
\end{align}

\begin{Definition}
(\cite{Khasminskii2012, Buckwar2012a})
The zero solution of system \eqref{eq:22} is said to be 

1. \textit{Mean-square stable}, if for each $\epsilon>0$, there exist a $\delta \geq 0$ such that
$$
    || X(t;t_0,X_0) ||_{L_2}^2 < \epsilon, \quad t \geq t_0
$$
whenever $||X_0||_{L_2}^2 < \delta$.

2. \textit{Asymptotically mean-square stable}, if it is mean-square stable and, when $||X_0||_{L_2}^2 < \delta$ 
$$
    || X(t;t_0,X_0) ||_{L_2}^2 \to 0 \text{ for } t \to \infty
$$

where $||X(t)||_{L_2} = \Big( \mathsf{E}  | X(t)|^2  \Big)^{1/2}$ denotes the mean-square norm.
\end{Definition}

On the other hand the zero solution of system \eqref{eq:22} is asymptotically mean-square stable if and only if the zero solution of deterministic differential system of second moments is asymptotically stable \cite{Arnold1974, Khasminskii2012}

\begin{align}\label{eq:23}
	&d \mathsf{E} \Big[ Y(t) \Big] = \left( F \mathsf{E} \Big[ Y(t) \Big] + \mathsf{E} \Big[ Y(t) \Big] F^T + \sum_{r=1}^m G_r \mathsf{E} \Big[ Y(t) \Big] G_r^T \right) dt,
	\\ \nonumber
	&Y(t) = X(t) X^T(t).
\end{align}

Applying the vectorization operation to both sides of \eqref{eq:23} yields

\begin{align}\label{eq:24}
	&d \mathsf{E} \Big[ Z(t) \Big] = S \mathsf{E} \Big[ Z(t) \Big] dt,
	\\[1em] \nonumber
	&Z(t) = vec \Big( Y(t) \Big) 
	\\ \nonumber
	&= \Big( X_1^2(t), X_2(t) X_1(t),...,X_d(t) X_1(t), X_1(t)X_2(t), X_2^2(t),...,X_d(t)X_2(t),...,X_d^2(t) \Big)
\end{align}
with the stability matrix

\begin{align}\label{eq:25}
	S_{dif} = I \otimes F + F \otimes I + \sum_{j=1}^m G_j \otimes G_j.
\end{align}
Here $\otimes$ denotes the Kronecker product.

Stability condition of the system in \eqref{eq:23} is given now by the following

\begin{Lemma}
(\cite{Buckwar2012a})
The zero solution of deterministic system \eqref{eq:23} is asymptotically stable if and only if

\begin{align}\label{eq:26}
	\alpha (S_{dif}) < 0
\end{align}
where $\alpha (S_{method}) = max_i \mathfrak{R}(\eta_i)$ is the spectral abscissa of matrix $S$ and $\mathfrak{R}$ is the real part of the eigenvalues $\eta_i$ of the matrix $S$. 
\end{Lemma}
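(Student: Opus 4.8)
The plan is to reduce the claim to the classical stability theorem for linear autonomous systems of ODEs. Since the vectorization map $\mathrm{vec}\colon \mathbb{R}^{d\times d}\to\mathbb{R}^{d^{2}}$ is a linear isomorphism and all norms on a finite-dimensional space are equivalent, the zero solution of the matrix system \eqref{eq:23} is asymptotically stable if and only if the zero solution of its vectorized form \eqref{eq:24} is. Hence it suffices to analyse the constant-coefficient system $\tfrac{d}{dt}\mathsf{E}[Z(t)] = S_{dif}\,\mathsf{E}[Z(t)]$ with $S_{dif}$ given by \eqref{eq:25}.

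Next I would write the solution explicitly as $\mathsf{E}[Z(t)] = e^{S_{dif}(t-t_{0})}\mathsf{E}[Z(t_{0})]$, so that asymptotic stability of the origin is equivalent to $\|e^{S_{dif}t}\|\to 0$ as $t\to\infty$. Passing to the Jordan form of $S_{dif}$, every entry of $e^{S_{dif}t}$ is a finite linear combination of terms $t^{k}e^{\eta_{i}t}$ with $\eta_{i}\in\sigma(S_{dif})$ and $k$ strictly below the size of the corresponding Jordan block. Each such term tends to $0$ exactly when $\mathfrak{R}(\eta_{i})<0$; this yields the implication $\alpha(S_{dif})<0\Rightarrow$ asymptotic stability. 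For the converse, if some $\eta_{i}$ has $\mathfrak{R}(\eta_{i})\ge 0$, the solution launched from the real or imaginary part of a corresponding eigenvector does not decay to $0$, so the origin is not asymptotically stable; this gives asymptotic stability $\Rightarrow\alpha(S_{dif})<0$, and hence \eqref{eq:26}.

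The one point requiring care is that the admissible initial data for \eqref{eq:23} are symmetric positive semidefinite matrices $Y(t_{0})=X_{0}X_{0}^{T}$: a priori only the restriction of $S_{dif}$ to the invariant subspace of symmetric matrices governs the evolution, and one must exclude the possibility that the complementary skew-symmetric invariant subspace carries an eigenvalue with larger real part. I would dispose of this by noting that $e^{S_{dif}t}$ is the solution operator of the Lyapunov-type equation \eqref{eq:23} and therefore maps the cone of symmetric positive semidefinite matrices into itself; by the Perron--Frobenius theory of cone-preserving semigroups the spectral abscissa of $S_{dif}$ is attained at an eigenvalue possessing a positive semidefinite (in particular symmetric) eigenvector, so $\alpha(S_{dif})$ equals the spectral abscissa of $S_{dif}$ restricted to the symmetric subspace. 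Alternatively, following \cite{Buckwar2012a}, one may simply declare \eqref{eq:24} to be considered on all of $\mathbb{R}^{d^{2}}$, in which case this reduction is not needed.

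The linear-ODE part of the argument is entirely routine; I expect the main obstacle to be precisely this last point — justifying that the full spectral abscissa $\alpha(S_{dif})$, rather than that of its restriction to symmetric matrices, is the quantity controlling asymptotic stability of \eqref{eq:23} — which the positivity of the second-moment semigroup (or the stated convention) resolves.
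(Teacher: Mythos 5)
The paper offers no proof of this lemma at all: it is quoted verbatim from \cite{Buckwar2012a}, so there is nothing internal to compare against. Your argument is the standard one and is correct. The routine part (vectorization, matrix exponential, Jordan form, eigenvector initial data for the converse) is exactly what one expects, and you have correctly isolated the only genuinely delicate step: the admissible initial data $\mathsf{E}[X_0X_0^T]$ span only the symmetric subspace, which is $S_{dif}$-invariant along with its skew-symmetric complement, so the converse direction needs $\alpha(S_{dif})=\alpha\bigl(S_{dif}\vert_{\mathrm{Sym}}\bigr)$. Your Perron--Frobenius resolution is the right idea, but note one refinement: the PSD cone is solid only inside the symmetric subspace, not in $\mathbb{R}^{d\times d}$, so the textbook Krein--Rutman theorem for solid cones applies directly only to $S_{dif}\vert_{\mathrm{Sym}}$ and does not by itself rule out a skew-symmetric eigenvalue with larger real part. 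What closes the gap is the Perron--Frobenius theorem for positive maps on matrix algebras (Evans--H{\o}egh-Krohn): the semigroup $Y\mapsto \mathsf{E}[X(t)X(t)^T]$ generated by $\mathcal{L}(Y)=FY+YF^T+\sum_r G_rYG_r^T$ is completely positive (a limit of compositions of congruences $Y\mapsto AYA^T$), and for such maps the spectral radius is attained at a positive semidefinite eigenvector; equivalently one can check directly for the pure Lyapunov part that the eigenvalues on the skew subspace, $\lambda_i+\lambda_j$ with $i\neq j$, are dominated by $2\max_i\mathfrak{R}(\lambda_i)$. With that citation supplied, your proof is complete and self-contained, which is more than the paper provides.
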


Analogously, for the numerical method written in the form of the recurrence

\begin{align}\label{eq:27}
	X_{n+1} = R_n X_n, \quad n=0,1,...
\end{align}
with sequence of random matrices $R_n$, the stability condition is given by

\begin{Lemma}
(\cite{Buckwar2012a})
The zero solution of deterministic system \eqref{eq:27} is asymptotically stable if and only if

\begin{align}\label{eq:28}
	\rho(S) < 1
\end{align}
where $S = \mathsf{E}(R_n \otimes R_n)$ and $\rho$ is the spectral radius of matrix $S$.
\end{Lemma}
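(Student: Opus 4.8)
The plan is to reduce the stability of the random linear recursion \eqref{eq:27} to the asymptotic stability of a deterministic linear recursion for the second moments --- exactly parallel to the passage from \eqref{eq:22} to \eqref{eq:24} --- and then to read off the spectral-radius criterion. Two structural facts about the matrices $R_n$ generated by the test equation \eqref{eq:22} are needed: since $R_n$ is built from the Wiener increments on $[t_n,t_{n+1}]$, it is independent of $\mathcal{F}_{t_n}$ and hence of $X_n$; and since the coefficients of \eqref{eq:22} are constant and the mesh equidistant, the $R_n$ are identically distributed, so $\mathsf{E}(R_n\otimes R_n)=S$ is independent of $n$. By the mixed-product property of the Kronecker product, $X_{n+1}\otimes X_{n+1}=(R_n\otimes R_n)(X_n\otimes X_n)$; taking expectations and using independence gives, with $v_n:=\mathsf{E}[X_n\otimes X_n]$,
\begin{align*}
	v_{n+1}=\mathsf{E}(R_n\otimes R_n)\,v_n=Sv_n,\qquad v_n=S^nv_0 .
\end{align*}
Here $v_n=\mathrm{vec}\big(\mathsf{E}[X_nX_n^{T}]\big)$ is the vectorization of a symmetric positive semidefinite matrix, $\|X_n\|_{L_2}^2=\mathsf{E}|X_n|^2=\mathrm{tr}\,\mathsf{E}[X_nX_n^{T}]$ is a fixed linear functional of $v_n$, and --- since every entry of a PSD matrix is controlled by its trace --- $\|X_n\|_{L_2}^2$ and $|v_n|$ are equivalent up to constants depending only on $d$. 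Consequently, the zero solution of \eqref{eq:27} is asymptotically mean-square stable in the sense of Definition 4.1 if and only if $S^nv_0\to 0$ for every $v_0$ in the cone $\mathcal{C}$ of vectorized PSD matrices.

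It then remains to turn ``$S^nv_0\to 0$ on $\mathcal{C}$'' into ``$\rho(S)<1$''. The map $M\mapsto\mathsf{E}[R_nMR_n^{T}]$ leaves the subspace $V_{\mathrm{sym}}$ of symmetric matrices invariant and maps $\mathcal{C}$ into itself; because $\mathcal{C}$ is solid in $V_{\mathrm{sym}}$ (the identity is an interior point), a basis of $V_{\mathrm{sym}}$ can be chosen inside $\mathcal{C}$, so convergence on $\mathcal{C}$ forces $S^n|_{V_{\mathrm{sym}}}\to 0$, i.e. $\rho\big(S|_{V_{\mathrm{sym}}}\big)<1$. Finally one identifies this with $\rho(S)$: the complexified map $M\mapsto\mathsf{E}[R_nMR_n^{*}]$ is completely positive, hence attains its spectral radius at a PSD --- in particular Hermitian --- eigenvector, and decomposing that eigenvector into its real-symmetric and real-antisymmetric parts (the latter cannot be PSD unless it vanishes) shows that the eigenvalue $\rho(S)$ is already realized on $V_{\mathrm{sym}}$. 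Reading all the equivalences backwards gives the converse direction.

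The Kronecker bookkeeping and the trace/PSD estimates are routine. The step I expect to be the genuine obstacle is precisely the last one: converting convergence on the PSD cone into a spectral condition on the full $d^2\times d^2$ matrix $S$, which requires both the solidity of $\mathcal{C}$ in the symmetric subspace and the fact that the completely-positive structure of $S$ prevents a larger eigenvalue from hiding on the antisymmetric complement. If instead one is content to phrase asymptotic mean-square stability directly through the second-moment vector $v_n$, this subtlety disappears and the criterion $\rho(S)<1$ follows immediately from standard linear recursion theory.
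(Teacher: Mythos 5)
The paper does not actually prove this lemma --- it is imported verbatim from \cite{Buckwar2012a} --- so there is no internal proof to compare against; what can be said is that your argument is correct and follows the same route as the cited source: pass to the second--moment recursion $v_{n+1}=\mathsf{E}(R_n\otimes R_n)v_n$ via the mixed-product property and the independence of $R_n$ (built from the increments on $[t_n,t_{n+1}]$) from the $\mathcal{F}_{t_n}$-measurable $X_n$, identify $\|X_n\|_{L_2}^2$ with the trace of the PSD matrix whose vectorization is $v_n$, and invoke the spectral-radius criterion for the resulting deterministic linear recursion. The one place where you go beyond what the references typically spell out is the passage from ``$S^nv_0\to0$ on the cone of vectorized PSD matrices'' to ``$\rho(S)<1$ on all of $\mathbb{R}^{d^2}$'', and you diagnose it correctly: the rank-one data $X_0\otimes X_0$ only span the symmetric subspace, so one must rule out a larger eigenvalue on the antisymmetric complement. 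Your positivity argument does this (a purely imaginary Hermitian PSD eigenvector would be traceless and hence zero, so the Perron eigenvector of the completely positive map $M\mapsto\mathsf{E}[R_nMR_n^{*}]$ has a nonvanishing real-symmetric component carrying the eigenvalue $\rho(S)$); an equivalent and slightly more elementary route is the norm estimate $\|\Phi^n(M)\|\le 2\|M\|\,\|\Phi^n(I)\|$ valid for positive maps, which converts $\Phi^n(I)\to0$ directly into $\rho(S)<1$. Either way the proof is complete and consistent with how the lemma is used in the paper.
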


{\blue \textit{Remark.} It is convenient to represent the stability matrix of the numerical method as the sum of two matrices, one corresponding to the deterministic part of the system and the other corresponding to the stochastic part

$$
	S_{method} = S^{det} + S^{stoch}.
$$
For multi-channel stochastic systems the major distinction between stability properties of the original differential system and the numerical method is contributed by matrix $S^{stoch}$. For increasing number of noise channels the stability condition of classical numerical methods becomes very restrictive destroying their ability to preserve the qualitative behavior of the differential system and making them absolutely impractical for applied purposes. 

%stability properties of numerical methods is mostly determined by  the matrix $S^{stoch}$.
%
%For stochastic systems with multi-dimensional noise matrix $S^{stoch}$ makes the main contribution to the stability
%As we will see matrix $S^{stoch}$ makes the main contribution to the stability 
}

In the following we will construct stability matrices for SSCTM, MSSCTM, SSAMM and MSSAMM methods for SDEs with different diffusion and drift structures {\blue and show how the splitting technique with proper choice of drift integrator can significantly improve efficiency of the numerical method}.

\subsection{Stability of Milstein method}

Stability of split-step methods depends on the corresponding properties of deterministic and stochastic integrators. In this paper, the stochastic part of SDEs is treated with the Milstein method. Therefore to understand the role of diffusion geometry in the stability of the numerical solution of stochastic systems we have to analyze this method first. In this section we investigate stability of the classical Milstein method applied to systems with commutative and non-commutative noise structures.

The Milstein method applied to the SDE \eqref{eq:22} has the form
\begin{align}\label{eq:29a}
	X_{n+1} = 
	 \left( I + h F + \sqrt{h} \sum_{r=1}^m G_r \xi_n^r + \sum_{r_1,r_2=1}^m G_{r_1} G_{r_2} I_{(r_1,r_2)} \right) X_n.
\end{align}

\paragraph{\textbf{Commutative noise}}
For linear SDE \eqref{eq:22} commutativity condition reads as $G_{j_1} G_{j_2} = G_{j_2} G_{j_1}$. Together with the identity $ I_{(j_1,j_2)} = I_{(j_1)} I_{(j_2)} + I_{(j_2)} I_{(j_1)}$ the Milstein method in \eqref{eq:29a} converts to

%\begin{align}\label{eq:29}
%X_{n+1} =
%	 \underbrace{ 
%	 \left( \overline{A} + \sum_{r=1}^m B_r \xi_n^r + \sum_{r_1,r_2=1}^m C_{r_1,r_2} \xi_n^{r_1} \xi_n^{r_2} 
%	  \right) 
%	 }_\text{$R_n$} X_n
%\end{align}
%where $A$, $B$ and $C$ are determined in \eqref{eq:34}.
\begin{align}\label{eq:29}
X_{n+1} =
	 \underbrace{ 
	 \left( I + h F - \frac{h}{2} \sum_{r=1}^m G_{r}^2 + \sqrt{h} \sum_{r=1}^m G_{r} \xi_n^r + \frac{h}{2} \sum_{r_1,r_2=1}^m G_{r_1} G_{r_2} \xi_n^{r_1} \xi_n^{r_2} 
	  \right) 
	 }_\text{$R_n$} X_n
\end{align}

\paragraph{\textbf{Non-commutative noise}} 
It is well known that for SDEs with a non-commutative noise it is impossible to build high-order strong schemes which contains Wiener increments only. One also needs to calculate multiple stochastic integrals $I_{(j_1,j_2)}$. Authors of \cite{Kloeden1992} proposed the following representation of the stochastic integrals based on the Karhunen-Lo\'eve expansion

\begin{align}\label{eq:32}
	\nonumber
	I_{(r)} &= \sqrt{h} \xi_{r},
%	\\ \nonumber
	\qquad
	I_{(r_1,r_2)} = \frac{1}{2}(I_{r_1} I_{r_2} - h \delta_{r_1,r_2}) + A_{(r_1,r_2)},
	\\
	A_{(r_1,r_2)} &= \frac{h}{2 \pi} \sum_{k=1}^{\infty} \frac{1}{k} \Big[ \chi_{r_1,k} (\zeta_{r_2,k}+\sqrt{2} \xi_{r_2}) - \chi_{r_2,k} (\zeta_{r_1,k}+\sqrt{2} \xi_{r_1})\Big],
\end{align}
where $\xi_{r}$, $\chi_{r,k}$ and $\zeta_{r,k}$ are normally distributed independent random variables and $\delta_{r_1,r_2}$ is the Kronecker delta. It was shown in \cite{Kloeden1992} that by taking $p \geq 1/h$ terms in the truncated series in \eqref{eq:32} numerical scheme preserves the strong order 1 of convergence.

Thus, Milstein method applied to the system \eqref{eq:22} with non-commutative noise reads

%\begin{align}\label{eq:33}
%	X_{n+1} = 
%	 \underbrace{ 
%	 \left( \overline{A} + \sum_{r=1}^m B_r \xi_n^r + \sum_{r_1,r_2=1}^m C_{r_1,r_2} \xi_n^{r_1} \xi_n^{r_2} 
%	 + \sum_{\substack{
%                       r_1,r_2=1\\
%                       r_1 \neq r_2
%                      }
%            }^m
%     D_{r_1,r_2} A_{(r_1,r_2)}^p \right) 
%	 }_\text{$R_n$} X_n
%\end{align}
%where $A$, $B$, $C$, $D$ and $A_{(r_1,r_2)}^p$ are determined by
\begin{align}\label{eq:33}
	X_{n+1} = 
	 \underbrace{ 
	 \left( I + h F - \frac{h}{2} \sum_{r=1}^m G_{r}^2 + \sqrt{h} \sum_{r=1}^m G_{r} \xi_n^r + \frac{h}{2} \sum_{r_1,r_2=1}^m G_{r_1} G_{r_2} \xi_n^{r_1} \xi_n^{r_2} 
	 + \sum_{\substack{
                       r_1,r_2=1\\
                       r_1 \neq r_2
                      }
            }^m
     G_{r_1} G_{r_2} A_{(r_1,r_2)}^p \right) 
	 }_\text{$R_n$} X_n,
\end{align}
where $A_{(r_1,r_2)}^p$ is a truncated series from \eqref{eq:32}.

%\begin{align}\label{eq:34}
%\nonumber
%&A = I + h F  ,
%	\qquad
%	\overline{A} = A - \sum_{r=1}^m C_{r,r},
%	\qquad
%	B_r = \sqrt{h} G_{r},
%	\\
%	&C_{r_1,r_2} = \frac{1}{2} h G_{r_1} G_{r_2},
%	\qquad
%	D_{r_1,r_2} = G_{r_1} G_{r_2},
%	\\ \nonumber
%	&A_{(r_1,r_2)}^p = \frac{h}{2 \pi} \sum_{k=1}^p \frac{1}{k} \Big[ \chi_{r_1,k} (\zeta_{r_2,k}+\sqrt(2) \xi_{r_2}) - \chi_{r_2,k} (\zeta_{r_1,k}+\sqrt(2) \xi_{r_1})\Big].
%\end{align}

\begin{Theorem}
The mean-square stability matrix of the Milstein method applied to the system \eqref{eq:22} is given by

%\begin{align}\label{eq:31}
%	S^{Mil}_{Comm}= 
%	(A \otimes A) + \sum_{r=1}^m \left(B_r \otimes B_r \right)
%	+ 2 \sum_{r=1}^m \left(C_{r,r} \otimes C_{r,r} \right) 	
%	+ \left(
%	\sum_{\substack{
%                    r_1,r_2=1\\
%                    r_1 \neq r_2
%                   }
%         }^m
%    C_{r_1,r_2} 
%    \otimes 
%    \sum_{\substack{
%                    r_1,r_2=1\\
%                    r_1 \neq r_2
%                   }
%         }^m
%    C_{r_1,r_2}
%    \right).
%\end{align}
%
%The mean-square stability matrix of the Milstein method applied to the system \eqref{eq:22} with non-commutative noise is given by
%
%\begin{align}\label{eq:35}
%	S_{NComm}^{Mil} = S_{Comm}^{Mil}
%	+ \frac{6}{\pi^2} \sum_{k=1}^p \frac{1}{k^2} 
%	\left[ 
%	\left(
%	\sum_{\substack{
%                       r_1,r_2=1\\
%                       r_1 < r_2}
%                       }^m C_{r_1,r_2}	
%	-\sum_{\substack{
%                       r_1,r_2=1\\
%                       r_1 > r_2}
%                       }^m C_{r_1,r_2}
%	\right)
%	\otimes 
%	\left(
%	\sum_{\substack{
%                       r_1,r_2=1\\
%                       r_1 < r_2}
%                       }^m C_{r_1,r_2}	
%	-\sum_{\substack{
%                       r_1,r_2=1\\
%                       r_1 > r_2}
%                       }^m C_{r_1,r_2}
%	\right)
%	\right].
%\end{align}

{\red

\begin{align}\label{eq:31}
	S^{Mil}&= S_{Mil}^{det} + S_{Mil}^{stoch},
\end{align}
where matrices $S_{Mil}^{det}$ and $S_{Mil}^{stoch}$ are determined by

\begin{align*}
	\nonumber
	S_{Mil}^{det} &= (I+hF) \otimes (I+hF) ,
	\\ \nonumber
	S_{Mil}^{stoch} &= 
%	\underbrace{ 
	h\sum_{r=1}^m G_r \otimes G_r
			+ \frac{h^2}{2} \sum_{r=1}^m \left(G_{r}^2 \otimes G_{r}^2 \right) 	
			+ \frac{h^2}{4} \left(
			\sum_{\substack{
		                    r_1,r_2=1\\
		                    r_1 \neq r_2
		                   }
		         }^m
		    G_{r_1} G_{r_2} 
		    	\otimes 
		    \sum_{\substack{
		                    r_1,r_2=1\\
		                    r_1 \neq r_2
		                   }
		         }^m
		    G_{r_1} G_{r_2} 
		    \right)
%	}_\text{Commutative noise}
		    \\ 
	&+ \delta_{com}
%	\underbrace{ 
	\frac{h^2}{4}
	\left(
	\sum_{\substack{
                       r_1,r_2=1\\
                       r_1 < r_2}
                       }^m G_{r_1} G_{r_2}
	-\sum_{\substack{
                       r_1,r_2=1\\
                       r_1 > r_2}
                       }^m G_{r_1} G_{r_2}
	\right)
	\otimes 
	\left(
	\sum_{\substack{
                       r_1,r_2=1\\
                       r_1 < r_2}
                       }^m G_{r_1} G_{r_2}
	-\sum_{\substack{
                       r_1,r_2=1\\
                       r_1 > r_2}
                       }^m G_{r_1} G_{r_2}
	\right)
%	}_\text{Non-commutative noise}
\end{align*}
with 

\begin{align}\label{eq:31a}
	\delta_{com} = \left\{
	\begin{array}{l l}
		0, & \quad \text{- commutative noise}
		\\
		1, & \quad \text{- non-commutative noise}
	\end{array}
	\right. .
\end{align}
}
\end{Theorem}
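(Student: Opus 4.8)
The plan is to compute the matrix $S^{Mil}=\mathsf{E}\big(R_n\otimes R_n\big)$ directly; by Lemma 4.2 its spectral radius determines asymptotic mean-square stability of the scheme, where $R_n$ is the random coefficient matrix of \eqref{eq:29} when $\delta_{com}=0$ and of \eqref{eq:33} when $\delta_{com}=1$, and since the Wiener increments on disjoint subintervals are i.i.d.\ this expectation does not depend on $n$. First I would split $R_n=\bar R+\tilde R$ with $\bar R:=\mathsf{E}[R_n]$ deterministic and $\mathsf{E}[\tilde R]=0$; by bilinearity of the Kronecker product, $\mathsf{E}\big(R_n\otimes R_n\big)=\bar R\otimes\bar R+\mathsf{E}\big(\tilde R\otimes\tilde R\big)$, which realizes the decomposition $S_{method}=S^{det}+S^{stoch}$ noted earlier. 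Computing $\bar R$ is immediate: using $\mathsf{E}[\xi_n^r]=0$, $\mathsf{E}[(\xi_n^r)^2]=1$ and $\mathsf{E}[A_{(r_1,r_2)}^p]=0$, every stochastic term of \eqref{eq:29}/\eqref{eq:33} averages to zero except the diagonal Milstein correction $\tfrac h2\sum_r G_r^2(\xi_n^r)^2$, whose mean $\tfrac h2\sum_r G_r^2$ cancels the deterministic term $-\tfrac h2\sum_r G_r^2$; hence $\bar R=I+hF$ and $S_{Mil}^{det}=(I+hF)\otimes(I+hF)$.

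For $S_{Mil}^{stoch}=\mathsf{E}\big(\tilde R\otimes\tilde R\big)$ I would write $\tilde R=\tilde R_1+\tilde R_2+\tilde R_3+\delta_{com}\tilde R_4$ with
\begin{align*}
&\tilde R_1=\sqrt h\sum_{r=1}^m G_r\xi_n^r,\qquad \tilde R_2=\frac h2\sum_{r=1}^m G_r^2\big((\xi_n^r)^2-1\big),\\
&\tilde R_3=\frac h2\sum_{r_1\neq r_2} G_{r_1}G_{r_2}\,\xi_n^{r_1}\xi_n^{r_2},\qquad \tilde R_4=\sum_{r_1\neq r_2} G_{r_1}G_{r_2}A_{(r_1,r_2)}^p,
\end{align*}
and expand $\mathsf{E}\big(\tilde R\otimes\tilde R\big)=\sum_{a,b}\mathsf{E}\big(\tilde R_a\otimes\tilde R_b\big)$ block by block. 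All cross-blocks $\mathsf{E}(\tilde R_a\otimes\tilde R_b)$ with $a\neq b$ vanish: those not involving $\tilde R_4$ by oddness of the Gaussian moments, and those involving $\tilde R_4$ because $A_{(r_1,r_2)}^p$ carries an independent mean-zero $\chi$-factor (so $\mathsf{E}[\xi_n^{s}A_{(r_1,r_2)}^p]=\mathsf{E}[\xi_n^{s_1}\xi_n^{s_2}A_{(r_1,r_2)}^p]=0$). The surviving diagonal blocks are evaluated from $\mathsf{E}[\xi_n^r\xi_n^s]=\delta_{rs}$, from Isserlis' formula $\mathsf{E}[\xi^{r_1}\xi^{r_2}\xi^{s_1}\xi^{s_2}]=\delta_{r_1r_2}\delta_{s_1s_2}+\delta_{r_1s_1}\delta_{r_2s_2}+\delta_{r_1s_2}\delta_{r_2s_1}$ (which yields $\mathsf{E}[((\xi^r)^2-1)^2]=2$ and the two $\delta$-contractions in $\tilde R_3\otimes\tilde R_3$), and from the L\'evy-area facts $A_{(r_2,r_1)}^p=-A_{(r_1,r_2)}^p$, $\mathsf{E}\big[(A_{(r_1,r_2)}^p)^2\big]=\tfrac{3h^2}{2\pi^2}\sum_{k\le p}k^{-2}=\tfrac{h^2}{4}+O(h^3)$ for $p\ge 1/h$, and $\mathsf{E}[A_{(r_1,r_2)}^pA_{(s_1,s_2)}^p]=0$ unless $\{r_1,r_2\}=\{s_1,s_2\}$. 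This gives
\begin{align*}
\mathsf{E}(\tilde R_1\otimes\tilde R_1)&=h\sum_{r=1}^m G_r\otimes G_r,\qquad \mathsf{E}(\tilde R_2\otimes\tilde R_2)=\frac{h^2}{2}\sum_{r=1}^m G_r^2\otimes G_r^2,\\
\mathsf{E}(\tilde R_3\otimes\tilde R_3)&=\frac{h^2}{4}\Bigg(\sum_{r_1\neq r_2}G_{r_1}G_{r_2}\otimes G_{r_1}G_{r_2}+\sum_{r_1\neq r_2}G_{r_1}G_{r_2}\otimes G_{r_2}G_{r_1}\Bigg),\\
\mathsf{E}(\tilde R_4\otimes\tilde R_4)&=\frac{h^2}{4}\Bigg(\sum_{r_1\neq r_2}G_{r_1}G_{r_2}\otimes G_{r_1}G_{r_2}-\sum_{r_1\neq r_2}G_{r_1}G_{r_2}\otimes G_{r_2}G_{r_1}\Bigg)+O(h^3).
\end{align*}
Finally, folding the ordered double sums into the blocks $r_1<r_2$ and $r_1>r_2$ rewrites $\mathsf{E}(\tilde R_3\otimes\tilde R_3)$ as $\tfrac{h^2}{4}\big(\sum_{r_1\neq r_2}G_{r_1}G_{r_2}\big)\otimes\big(\sum_{r_1\neq r_2}G_{r_1}G_{r_2}\big)$ and $\mathsf{E}(\tilde R_4\otimes\tilde R_4)$ as $\tfrac{h^2}{4}\big(\sum_{r_1<r_2}G_{r_1}G_{r_2}-\sum_{r_1>r_2}G_{r_1}G_{r_2}\big)\otimes\big(\sum_{r_1<r_2}G_{r_1}G_{r_2}-\sum_{r_1>r_2}G_{r_1}G_{r_2}\big)+O(h^3)$; summing all blocks with $\delta_{com}$ gating the $\tilde R_4$ contribution, and absorbing the $O(h^3)$ remainder (consistently with the strong order one of the scheme), reproduces exactly \eqref{eq:31}.

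The main obstacle is the index bookkeeping in this last folding step: the ordered pairs $(r_1,r_2)$ must be sorted so that the symmetric combination $B+C$ and the antisymmetric combination $B-C$ emerge, with $B=\sum_{r_1<r_2}G_{r_1}G_{r_2}$ and $C=\sum_{r_1>r_2}G_{r_1}G_{r_2}$, while the sign flip $A_{(r_2,r_1)}^p=-A_{(r_1,r_2)}^p$ is tracked carefully. That flip is exactly what lets the antisymmetric block survive for non-commutative noise and, since $\sum_{r_1\neq r_2}G_{r_1}G_{r_2}A_{(r_1,r_2)}^p=\sum_{r_1<r_2}\big(G_{r_1}G_{r_2}-G_{r_2}G_{r_1}\big)A_{(r_1,r_2)}^p$ vanishes identically once the $G_r$ commute, it explains the indicator $\delta_{com}$. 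A secondary technical point is to make precise that the truncated Karhunen--Lo\'eve series for $A_{(r_1,r_2)}^p$ reproduces the true L\'evy-area second moment only up to an $O(h^3)$ error and that this error does not affect the stability matrix.
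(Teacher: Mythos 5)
Your route is genuinely different from the paper's: the paper proves this theorem by a one-line citation to Theorems 3.9 and 3.10 of Buckwar--Sickenberger, whereas you carry out the computation of $\mathsf{E}(R_n\otimes R_n)$ from scratch. The overall strategy is sound, and almost all of it checks out: the splitting $R_n=\bar R+\tilde R$ with $\bar R=I+hF$, the vanishing of every cross-block by parity of Gaussian moments and by the independent mean-zero $\chi$-factors in $A^p_{(r_1,r_2)}$, the values $\mathsf{E}[((\xi^r)^2-1)^2]=2$ and $\mathsf{E}[(A^p_{(r_1,r_2)})^2]=\tfrac{h^2}{4}+O(h^3)$, and the resulting diagonal blocks $\mathsf{E}(\tilde R_a\otimes\tilde R_a)$ are all correct.

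The gap is the final ``folding'' step. The identity you assert there,
\begin{align*}
\sum_{r_1\neq r_2}\Bigl(G_{r_1}G_{r_2}\otimes G_{r_1}G_{r_2}+G_{r_1}G_{r_2}\otimes G_{r_2}G_{r_1}\Bigr)
\;=\;
\Bigl(\sum_{r_1\neq r_2}G_{r_1}G_{r_2}\Bigr)\otimes\Bigl(\sum_{r_1\neq r_2}G_{r_1}G_{r_2}\Bigr),
\end{align*}
is false for $m\geq 3$: the right-hand side contains cross-pair terms such as $G_1G_2\otimes G_1G_3$, while Isserlis' formula shows that $\mathsf{E}[\xi^1\xi^2\xi^1\xi^3]=0$, so no such term can appear in $\mathsf{E}(\tilde R_3\otimes\tilde R_3)$. (For $m=3$ and $G_1=G_2=G_3$ scalar and equal to $1$ the left side is $12$ and the right side is $36$.) The same objection applies to rewriting $\mathsf{E}(\tilde R_4\otimes\tilde R_4)$ as $\bigl(\sum_{r_1<r_2}-\sum_{r_1>r_2}\bigr)\otimes\bigl(\sum_{r_1<r_2}-\sum_{r_1>r_2}\bigr)$, which again introduces cross terms between distinct unordered pairs even though $\mathsf{E}[A^p_{(r_1,r_2)}A^p_{(s_1,s_2)}]=0$ whenever $\{r_1,r_2\}\neq\{s_1,s_2\}$. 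The two sides agree only when $m=2$ (a single unordered pair), which is presumably why the discrepancy is easy to miss. The quantities you computed before folding, namely $\tfrac{h^2}{4}\sum_{r_1\neq r_2}\bigl(G_{r_1}G_{r_2}\otimes G_{r_1}G_{r_2}\pm G_{r_1}G_{r_2}\otimes G_{r_2}G_{r_1}\bigr)$, are the correct values of $\mathsf{E}(\tilde R_3\otimes\tilde R_3)$ and $\mathsf{E}(\tilde R_4\otimes\tilde R_4)$; you cannot force them into the product-of-sums form displayed in the theorem for general $m$, and you should either stop at the sum-of-Kronecker-products expressions or restrict the folding claim to $m=2$. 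You should also flag explicitly, rather than absorb silently, that the truncated L\'evy-area second moment only matches $\tfrac{h^2}{4}$ up to $O(h^{3})$, so the stated stochastic stability matrix for non-commutative noise is exact only in the limit $p\to\infty$.
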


\begin{proof}
	Proof follows from Theorems 3.9 and 3.10 in \cite{Buckwar2012a}.
\end{proof}

{\blue
\textit{Example.} (\cite{Buckwar2011}) In 1-dimensional case condition \eqref{eq:28} with matrix \eqref{eq:31} for commutative noise reads as

$$
\underbrace{
\mathfrak{R} (f) + \frac{1}{2} \sum_{r=1}^M |g_r|^2
}_{S_{dif}}
+ 
\frac{1}{2}h |f|^2 + \frac{1}{4}h \sum_{r=1}^M |g_r|^4 
		+ \frac{1}{8}h \Bigg| \sum_{\substack{
                       r_1,r_2=1\\
                       r_1 \neq r_2}
                       }^M g_{r_1} g_{r_2} \Bigg|^2 
< 0 .
$$
This example clearly illustrates that classical Milstein method has extremely poor stability properties and is not able to catch the qualitative behavior of the stochastic systems with multi-dimensional noise.
}

\subsection{Stability of split-step methods}
%With fixed stochastic solver stability properties of split-step numerical schemes can be improved by the proper choice of  drift integrator. 
In this section we establish stability matrices in the form convenient for analyzing their impact arising from different deterministic drift solvers.

Split-step Milstein method \eqref{eq:4} applied to the linear system \eqref{eq:22} reads

%\begin{align}\label{eq:36}
%	\nonumber
%	&\hat{X}_n = X_n + h \Phi \left( t_n,t_{n+1}, X_n, \hat{X}_n \right) = P X_n
%	\\ \nonumber
%	&X_{n+1} = 
%	 \left( I + \left[ \sqrt{h} \sum_{r=1}^m G_r \xi_n^r  + \sum_{r_1,r_2=1}^m G_{r_1} G_{r_2} I_{(r_1,r_2)} \right] 
%    \Bigg[ \eta I + (1-\eta) P^{-1} \Bigg]	\right)
%	 P X_n
%	 \\
%	 &=
%%	 \left\{ 
%%		\begin{array}{l l}
%%			\underbrace{ 
%%	 \left( 
%%	 Q^{-1} + \sqrt{h} \sum_{r=1}^m G_{r} \xi_n^r + \frac{h}{2} \sum_{r_1,r_2=1}^m G_{r_1} G_{r_2} \xi_n^{r_1} \xi_n^{r_2} 
%%	 %\overline{A} + \sum_{r=1}^m B_r \xi_n^r + \sum_{r_1,r_2=1}^m C_{r_1,r_2} \xi_n^{r_1} \xi_n^{r_2} 
%%	 \right) 
%%	 }_\text{$R_n$} Q P X_n ,
%%	        & \quad \text{- commutative noise} 
%%	        \\
%%	        \\
%%			\underbrace{ 
%	 \left( 
%%	 \overline{A} + \sum_{r=1}^m B_r \xi_n^r + \sum_{r_1,r_2=1}^m C_{r_1,r_2} \xi_n^{r_1} \xi_n^{r_2} 
%%	 + \sum_{\substack{
%%                       r_1,r_2=1\\
%%                       r_1 \neq r_2
%%                      }
%%            }^m
%%     D_{r_1,r_2} A_{(r_1,r_2)}^p 
%	 Q^{-1} + \sqrt{h} \sum_{r=1}^m G_{r} \xi_n^r + \frac{h}{2} \sum_{r_1,r_2=1}^m G_{r_1} G_{r_2} \xi_n^{r_1} \xi_n^{r_2} 
%	 + \delta_{NCom} \sum_{\substack{
%                       r_1,r_2=1\\
%                       r_1 \neq r_2
%                      }
%            }^m
%     G_{r_1} G_{r_2} A_{(r_1,r_2)}^p
%	\right) 
%	 }_\text{$R_n$} QP X_n ,
%%			& \quad \text{- non-commutative noise}
%%		\end{array} 
%%	\right.
%\end{align}

{\red
\begin{align}\label{eq:36}
	\nonumber
	\hat{X}_n &= X_n + h \Phi \left( t_n,t_{n+1}, X_n, \hat{X}_n \right) = P X_n
	\\ \nonumber
	X_{n+1} &= 
	 \left( I + \left[ \sqrt{h} \sum_{r=1}^m G_r \xi_n^r  + \sum_{r_1,r_2=1}^m G_{r_1} G_{r_2} I_{(r_1,r_2)} \right] 
    \Bigg[ \eta I + (1-\eta) P^{-1} \Bigg]	\right)
	 P X_n
	 \\
	 &=
	 \underbrace{
	 \left( 
	 Q^{-1} + \sqrt{h} \sum_{r=1}^m G_{r} \xi_n^r + \frac{h}{2} \sum_{r_1,r_2=1}^m G_{r_1} G_{r_2} \xi_n^{r_1} \xi_n^{r_2} 
	 + \delta_{com} \sum_{\substack{
                       r_1,r_2=1\\
                       r_1 \neq r_2
                      }
            }^m
     G_{r_1} G_{r_2} A_{(r_1,r_2)}^p
	\right) 
	 }_\text{$R_n$} QP X_n ,
\end{align}
where $\delta_{com}$ is defined in \eqref{eq:31a}, $P$ is a stability matrix of given deterministic solver and matrix $Q=\eta I + (1-\eta) P^{-1}$.
}

%\begin{align}\label{eq:37}
%	&A = I, \qquad \overline{A} = A - \sum_{r=1}^m C_{r,r},
%	\qquad
%	B_r = \sqrt{h} G_r Q,
%	\qquad
%	C_{r_1,r_2} = \frac{1}{2} h G_{r_1} G_{r_2} Q,
%	\\ \nonumber
%	&D_{r_1,r_2} = G_{r_1} G_{r_2} Q,
%	\qquad
%	Q = \eta I + (1-\eta) P^{-1}.
%\end{align}

Modified split-step Milstein method \eqref{eq:5} applied to the linear SDE \eqref{eq:22} has the form similar to the above with matrix $P$ given by $X_n + h\Phi \left( t_n,t_{n+1}, X_n, \hat{X}_n \right) - h \frac{1}{2} \sum_{r=1}^m G_{r}^2 \hat{X}_n
	= P X_n$ and multiple It\^o integrals substituted by Stratonovich integrals. % which also implies different matrix $\overline{A}=I$ in \eqref{eq:37}.

{\red
\begin{Theorem}
The mean-square stability matrix of the split-step and modified split-step Milstein methods in \eqref{eq:36} applied to the system \eqref{eq:22} is given by

\begin{align}\label{eq:38}
	S &= \mathsf{E} (R_n Q P \otimes R_n Q P) = \mathsf{E} (R_n \otimes R_n) (QP \otimes QP),
\end{align}
where the split-step method \eqref{eq:4} yields %for the commutative noise yields

\begin{align}\label{eq:39}
	S_{SSM} = \Big( \big( Q \otimes Q \big)^{-1} + S_{Mil}^{stoch} \Big) \Big( QP \otimes QP \Big)
\end{align}
and for the modified split-step method \eqref{eq:5} we have

\begin{align}\label{eq:41}
	S_{MSSM}
			= \Bigg( \big( Q \otimes Q \big)^{-1} + S_{Mil}^{stoch} + \frac{h}{2}\sum_{r=1}^m \Big( (G_r^2 \otimes Q^{-1}) + (Q^{-1} \otimes G_r^2) \Big) + \frac{h^2}{4} \sum_{r=1}^m (G_r^2 \otimes G_r^2) \Bigg) \Bigg( QP \otimes QP \Bigg) .
\end{align}

%\begin{align}\label{eq:39}
%	\mathsf{E} (R_n \otimes R_n) &= S_{Comm}^{SSM} = S_{Comm}^{Mil} 
%\end{align}
%and for the non-commutative noise 
%
%\begin{align}\label{eq:40}
%	\mathsf{E} (R_n \otimes R_n) &= S_{NComm}^{SSM} =  S_{NComm}^{Mil}
%\end{align}
%with matrices $A$, $B$ and $C$ defined in \eqref{eq:37}.

%\begin{align}\label{eq:41}
%	\mathsf{E} (R_n \otimes R_n) &= S_{Comm}^{MSSM} = S_{Comm}^{Mil}  + \sum_{r=1}^m \Big( (C_{r,r} \otimes I) + (I \otimes C_{r,r}) \Big) + \sum_{r=1}^m (C_{r,r} \otimes C_{r,r})
%\end{align}
%and for the non-commutative noise 
%
%\begin{align}\label{eq:42}
%	\mathsf{E} (R_n \otimes R_n) &= S_{NComm}^{MSSM}
%	\\	\nonumber
%	&=S_{Comm}^{MSSM} + \frac{6}{\pi^2} 
%	\sum_{k=1}^p \frac{1}{k^2}
%	\left[ 
%	\left(
%	\sum_{\substack{
%                       r_1,r_2=1\\
%                       r_1 < r_2}
%                       }^m C_{r_1,r_2}	
%	-\sum_{\substack{
%                       r_1,r_2=1\\
%                       r_1 > r_2}
%                       }^m C_{r_1,r_2}
%	\right)
%	\otimes 
%	\left(
%	\sum_{\substack{
%                       r_1,r_2=1\\
%                       r_1 < r_2}
%                       }^m C_{r_1,r_2}	
%	-\sum_{\substack{
%                       r_1,r_2=1\\
%                       r_1 > r_2}
%                       }^m C_{r_1,r_2}
%	\right)
%	\right]
%\end{align}

\end{Theorem}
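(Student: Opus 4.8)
The plan is to reduce everything to the Milstein stability matrix \eqref{eq:31} by peeling the random part of the one-step map off from the deterministic drift-integrator factors, and then to invoke elementary Kronecker-product algebra.

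First I record the structure of the one-step map. From the derivation leading to \eqref{eq:36}, the split-step Milstein method \eqref{eq:4} applied to \eqref{eq:22} can be written $X_{n+1}=(R_nQP)X_n$ with $R_n=Q^{-1}+\Delta_n$, where $\Delta_n=\sqrt h\sum_{r=1}^m G_r\xi_n^r+\sum_{r_1,r_2=1}^m G_{r_1}G_{r_2}I_{(r_1,r_2)}$ is exactly the stochastic increment of the plain Milstein scheme \eqref{eq:29a} and satisfies $\mathsf{E}[\Delta_n]=0$ because $\mathsf{E}[I_{(r_1,r_2)}]=0$ for all $r_1,r_2$. (For $h$ small the ODE-solver matrix $P$, and hence $Q=\eta I+(1-\eta)P^{-1}$, are of the form $I+O(h)$ and therefore invertible, which also legitimises the identity $(Q\otimes Q)^{-1}=Q^{-1}\otimes Q^{-1}$ used below.) Writing the recurrence in the form \eqref{eq:27} with recurrence matrix $R_nQP$, the matrix entering the criterion \eqref{eq:28} is $S=\mathsf{E}\big((R_nQP)\otimes(R_nQP)\big)$; since $QP$ is deterministic, the mixed-product rule $(AB)\otimes(CD)=(A\otimes C)(B\otimes D)$ gives $S=\mathsf{E}(R_n\otimes R_n)\,(QP\otimes QP)$, which is \eqref{eq:38}. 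Everything then hinges on evaluating $\mathsf{E}(R_n\otimes R_n)$.

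For the unmodified method, bilinearity of $\otimes$, the determinism of $Q^{-1}$ and $\mathsf{E}[\Delta_n]=0$ give $\mathsf{E}(R_n\otimes R_n)=Q^{-1}\otimes Q^{-1}+\mathsf{E}(\Delta_n\otimes\Delta_n)$. Applying the identical splitting to the Milstein recurrence matrix, $R_n^{Mil}=(I+hF)+\Delta_n$, and comparing with \eqref{eq:31} --- which asserts $\mathsf{E}(R_n^{Mil}\otimes R_n^{Mil})=(I+hF)\otimes(I+hF)+S_{Mil}^{stoch}$ --- forces $\mathsf{E}(\Delta_n\otimes\Delta_n)=S_{Mil}^{stoch}$. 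Hence $\mathsf{E}(R_n\otimes R_n)=(Q\otimes Q)^{-1}+S_{Mil}^{stoch}$ and multiplication by $QP\otimes QP$ gives \eqref{eq:39}. For the modified method \eqref{eq:5} the only changes are that the deterministic stage absorbs the term $-\tfrac h2\sum_r G_r^2\hat X_n$ (already folded into its $P$, as noted just before the theorem) and that the It\^o integrals $I_{(r_1,r_2)}$ are replaced by the Stratonovich ones $J_{(r_1,r_2)}$; by \eqref{eq:10} this amounts to replacing $\Delta_n$ by $\Delta_n^{S}=\Delta_n+\tfrac h2\sum_r G_r^2$, since $J_{(r,r)}=I_{(r,r)}+\tfrac h2$ while $J_{(r_1,r_2)}=I_{(r_1,r_2)}$ for $r_1\neq r_2$. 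Thus $R_n=\big(Q^{-1}+\tfrac h2\sum_r G_r^2\big)+\Delta_n$ is again a deterministic matrix plus the same zero-mean $\Delta_n$, and expanding $\mathsf{E}(R_n\otimes R_n)$ as before reproduces $(Q\otimes Q)^{-1}+S_{Mil}^{stoch}$ together with the mixed-product contributions of $Q^{-1}$ with the drift shift $\tfrac h2\sum_r G_r^2$; collecting these and multiplying by $QP\otimes QP$ gives \eqref{eq:41}.

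The computation is mostly bookkeeping, and its two load-bearing points are precisely the reductions that keep it short. The first is recognising that the random part of $R_n$ is literally the Milstein increment $\Delta_n$, so the combinatorial second-moment calculation behind $S_{Mil}^{stoch}$ need not be redone --- it is inherited verbatim from \eqref{eq:31} (hence from \cite{Buckwar2012a}). The second, for the modified scheme, is correctly classifying the It\^o--Stratonovich correction $\tfrac h2\sum_r G_r^2$ as deterministic: it merely shifts the mean of $R_n$, so no random contribution is altered and the sole new effect is the extra $O(h)$ and $O(h^2)$ terms in \eqref{eq:41}. I expect the non-commutative case (the $\delta_{com}$ terms) to demand the most care in carrying the Kronecker bookkeeping through, but no new ideas beyond those already packaged in \eqref{eq:31}.
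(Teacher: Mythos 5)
Your argument is correct and is exactly the reduction the paper intends: its entire proof is ``follows immediately from Theorem 4.1,'' which is precisely your observation that the random part of $R_n$ is the zero-mean Milstein increment $\Delta_n$, so $\mathsf{E}(R_n\otimes R_n)$ splits into the Kronecker square of the deterministic mean plus the inherited $S_{Mil}^{stoch}=\mathsf{E}(\Delta_n\otimes\Delta_n)$, after which the mixed-product rule handles the factor $QP\otimes QP$. One caveat: carried out literally, your expansion of the modified scheme's mean $\bigl(Q^{-1}+\tfrac{h}{2}\sum_r G_r^2\bigr)\otimes\bigl(Q^{-1}+\tfrac{h}{2}\sum_s G_s^2\bigr)$ produces $\tfrac{h^2}{4}\bigl(\sum_r G_r^2\bigr)\otimes\bigl(\sum_s G_s^2\bigr)$, which for $m>1$ contains off-diagonal terms absent from the diagonal-only sum $\tfrac{h^2}{4}\sum_r (G_r^2\otimes G_r^2)$ printed in \eqref{eq:41}; this appears to be a typo in the stated formula rather than a flaw in your reasoning, but your claim that the computation ``gives \eqref{eq:41}'' should be qualified accordingly.
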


\begin{proof}
Proof follows immediately from Theorem 4.1.
\end{proof}
}

\subsection{Deterministic stability matrices of SSCTM, MSSCTM, SSAMM and MSSAMM methods}
It is straightforward to verify that deterministic stability matrices of SSCTM, MSSCTM, SSAMM and MSSAMM methods are given as follows

\begin{align}\label{eq:43}
	P_{SSCTM} &= \Big(I - h \theta F \Big)^{-1} \Big(I + h (1-\theta) F \Big),
	\\ \label{eq:44}
	P_{MSSCTM} &= \Big( I - h \theta F + \eta H \Big)^{-1} \Big( I + h (1-\theta) F - (1-\eta) H \Big),
	\\ \label{eq:45}
	P_{SSAMM} &=  \Big( I - h \gamma_1 F \Big)^{-1} 
     \left(I + h F \left( \frac{1}{2} I +  \theta  \Big( I - h \gamma_1 F \Big)^{-1} \Big(I + h \gamma_2 F \Big) \right) \right),
     \\ \label{eq:46}
	P_{MSSAMM} &=  \Big( I - h \gamma_1 F + \eta H \Big)^{-1} 
     \left(I + h F \left( \frac{1}{2} I +  \theta  \Big( I - h \gamma_1 F \Big)^{-1} \Big(I + h \gamma_2 F \Big) \right) - (1-\eta) H \right),
%     \\ \label{eq:46a}
%	P_{RSB} &=  I + h \Big( I - h \gamma F \Big)^{-1} F
%     \left( (b_1+b_2)I + h b_2 (\alpha_{21}+\gamma_{21}) \Big( I - h \gamma F \Big)^{-1} F  \right),
\end{align}
where $	H = \frac{h}{2} \sum_{r=1}^m G_{r}^2$, $ \gamma_1 = \frac{1}{2} - \theta$, $\gamma_2 = \frac{1}{2} + \theta $

%\begin{eqnarray}\label{eq:43}
%	\text{SSCTM method:} & 
%	&P = \Big(I - h \theta F \Big)^{-1} \Big(I + h (1-\theta) F \Big),
%	\\\label{eq:44}
%	\text{MSSCTM method:} & 
%	&P = \left(I - h \theta F + \frac{h \eta}{2} \sum_{r=1}^m G_{r}^2 \right)^{-1} \left( I + h (1-\theta) F - \frac{h (1-\eta)}{2} \sum_{r=1}^m G_{r}^2 \right),
%\end{eqnarray}

\subsection{Application to different test equations}

\newblue{
The standard rule of thumb for analysis of stability of numerical methods is to build and compare their stability regions, i.e. the sets of points in the space of system parameters for which the method is stable in some specified sense. Unfortunately, this approach does not allow to visualize stability of general stochastic systems since we are limited to three parameter dimensions only. We cannot hope to consider all possible combinations of drift and diffusion geometries with this number of available parameters. Therefore there is no simple way to choose a universal test equation for system of SDEs.

To illustrate stability properties of the proposed numerical schemes we consider three test systems which, in our opinion, illustrate the most important factors to numerical stability of SDEs. 
%Obviously, stability matrices derived in the previous sections can easily be applied to any other system.
}
%
%These systems were constructed in \cite{Buckwar2012a, Buckwar2012b} in order to show the influence of different noise and drift structures on the stability of stochastic systems and numerical methods used for their approximation.

First test system has simple diagonal drift matrix and two non-commutative \newred{destabilizing in the mean-square} noise terms:

\begin{align}\label{eq:47}
	dX(t) = 
	\underbrace{
	\begin{pmatrix}
		\lambda & 0 \\[0.5em]
		0 & \lambda
	\end{pmatrix}
	}_\text{F}
	X(t) dt
	+
	\underbrace{
	\begin{pmatrix}
		\epsilon & 0 \\[0.5em]
		0 & -\epsilon
	\end{pmatrix}
	}_\text{$G_1$}
	X(t) dW_1(t)
	+
	\underbrace{
	\begin{pmatrix}
		0 & \sigma \\[0.5em]
		\sigma & 0
	\end{pmatrix}
	}_\text{$G_2$}
	X(t) dW_2(t).
\end{align}
\newblue{
Diffusion matrices $G_1$ and $G_2$ act along and orthogonal to the deterministic flow respectively.
This system was proposed in \cite{Buckwar2012b} and shows the effect of the discretization of different diffusion structures on the numerical stability of the method.
}

\newblue{
The form of the second test system is motivated by the study of the pseudospectra of non-normal matrices and operators \cite{Trefethen2005}. It is known that even asymptotically stable non-normal systems may exhibit large transient growth. It was shown in \cite{Higham2006, Buckwar2012a} that such systems may be destabilized in the asymptotic mean-square sense by the small amount of noise acting orthogonally to the deterministic flow. 
Thus a second test system is used to show interaction between the drift and diffusion structures. It is given by

}

\begin{align}\label{eq:48}
	dX(t) = 
	\underbrace{
	\begin{pmatrix}
		\lambda & b \\[0.5em]
		0 & \lambda
	\end{pmatrix}
	}_\text{F}
	X(t) dt
	+
	\underbrace{
	\begin{pmatrix}
		0 & \sigma \\[0.5em]
		-\sigma & 0
	\end{pmatrix}
	}_\text{$G$}
	X(t) dW(t),
\end{align}
\newblue{where parameter $b$ determines the departure from the normality of the matrix.}

\newblue{
The aim of the third test equation is to study the impact on the stability of the numerical method arising from the dimension of the noise. In this case we are not interested in the particular structures of the drift and diffusion components and consider a one-dimensional multi-channel SDE of the form:

}

\begin{align}\label{eq:48a}
	dX(t) = 
	\lambda
	X(t) dt
	+
	\sigma X(t)
	\sum_{r=1}^m
	dW_r(t).
\end{align}

%Finally, the fourth test system is a non-normal system with lateral stochastic perturbation:
%
%\begin{align}\label{eq:43}
%	dX(t) = 
%	\underbrace{
%	\begin{pmatrix}
%		\lambda & b \\[0.5em]
%		0 & \lambda
%	\end{pmatrix}
%	}_\text{F}
%	X(t) dt
%	+
%	\underbrace{
%	\begin{pmatrix}
%		0 & 0 \\[0.5em]
%		\epsilon & 0
%	\end{pmatrix}
%	}_\text{$G$}
%	X(t) dW(t)
%\end{align}

In the following we will compute and plot stability regions of the considered methods. All proofs are based on the particular form of the stability matrix and Lemmas 4.1 and 4.2 and hence are omitted.

%\subsection{Stability condition for differential system}
We start with stability regions of the differential systems in \eqref{eq:47}-\eqref{eq:48}. Applying results of Lemma 4.1 yields

\begin{Lemma}
(\cite{Buckwar2012a, Buckwar2012b})
1. The zero solution of \eqref{eq:47} is asymptotically stable if and only if

\begin{align}\label{eq:49}
	2x + y^2 + z^2 < 0
\end{align}
with $x=h \lambda$, $y^2 = h \epsilon^2$, $z^2 = h \sigma^2$.

2. The zero solution of \eqref{eq:48} is asymptotically stable if and only if

\begin{align}\label{eq:50}
	2 x+\frac{1}{3} \sqrt[3]{27 y^4 z^2+3 \sqrt{81 y^8 z^4+48 y^4 z^8}+8 z^6}+\frac{4 z^4}{3 \sqrt[3]{27 y^4 z^2+3 \sqrt{81 y^8 z^4+48 y^4 z^8}+8 z^6}}-\frac{z^2}{3} < 0
\end{align}
with $x=h \lambda$, $y^2 = h b$, $z^2 = h \sigma^2$.
%3. The zero solution of \eqref{eq:43} is asymptotically stable if and only if
%
%\begin{align}\label{eq:46}
%	\lambda + \left( \frac{\epsilon b}{2} \right)^{2/3} < 0
%\end{align}
\end{Lemma}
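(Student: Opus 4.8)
\emph{Proof proposal.} The plan is to reduce everything to Lemma~4.1: by criterion \eqref{eq:26}, the zero solution of the second-moment system of a linear SDE is asymptotically mean-square stable if and only if the spectral abscissa $\alpha(S_{dif})$ of the Kronecker-sum matrix \eqref{eq:25} is negative. Since $d=2$ for both \eqref{eq:47} and \eqref{eq:48}, $S_{dif}$ is only $4\times4$, so in each case I would assemble $S_{dif}$ from the given $F$ and $G_r$, compute its eigenvalues, single out the one of largest real part, and then rewrite $\alpha(S_{dif})<0$ in the stated variables using $x=h\lambda$, $y^2=h\epsilon^2$ (resp.\ $y^2=hb$), $z^2=h\sigma^2$ — a harmless rescaling because $h>0$ preserves the sign.

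For system \eqref{eq:47} I would first use that $F=\lambda I$ commutes with everything, so $S_{dif}=2\lambda I_4+M$ with $M=G_1\otimes G_1+G_2\otimes G_2$. One gets $G_1\otimes G_1=\epsilon^2\,\mathrm{diag}(1,-1,-1,1)$ and $G_2\otimes G_2$ equal to $\sigma^2$ times the coordinate-reversing permutation. Reindexing the four coordinates so the mirror pairs become adjacent decouples $M$ into two $2\times2$ blocks, each of the form $c\,I_2+\sigma^2 P$ with $P=\bigl(\begin{smallmatrix}0&1\\1&0\end{smallmatrix}\bigr)$ and $c=\pm\epsilon^2$; these have eigenvalues $c\pm\sigma^2$, so the eigenvalues of the symmetric matrix $M$ are $\{\pm\epsilon^2\pm\sigma^2\}$ and $\alpha(S_{dif})=2\lambda+\epsilon^2+\sigma^2$. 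Multiplying $\alpha(S_{dif})<0$ through by $h$ gives \eqref{eq:49} at once.

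For system \eqref{eq:48} I would write $F=\lambda I+bN_0$, $N_0=\bigl(\begin{smallmatrix}0&1\\0&0\end{smallmatrix}\bigr)$, so that again $S_{dif}=2\lambda I_4+M$ with $M=b(I\otimes N_0+N_0\otimes I)+G\otimes G$ explicit. Expanding $\det(M-\mu I_4)$ along its first column (only two nonzero entries) should produce the factorization
\[
	\det(M-\mu I_4)=(\mu-\sigma^2)\bigl[(\mu-\sigma^2)(\mu+\sigma^2)^2-2b^2\sigma^2\bigr],
\]
so the eigenvalues of $S_{dif}$ are $2\lambda+\sigma^2$ together with $2\lambda+\mu$ for the roots $\mu$ of the cubic $q(\mu)=(\mu-\sigma^2)(\mu+\sigma^2)^2-2b^2\sigma^2$. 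A short sign analysis of $q$ and $q'$ shows that $q$ has a unique real root $\mu_1$, that $\mu_1>\sigma^2$ (so it dominates the eigenvalue $2\lambda+\sigma^2$), and that it also exceeds the common real part of the two complex conjugate roots; hence $\alpha(S_{dif})=2\lambda+\mu_1$. To get $\mu_1$ in closed form I would depress the cubic by $\mu=w-\sigma^2/3$, reaching $w^3-\tfrac{4}{3}\sigma^4 w-\bigl(\tfrac{16}{27}\sigma^6+2b^2\sigma^2\bigr)=0$, apply Cardano's formula, simplify its second radical via $(-\tfrac q2+\sqrt{\cdot})(-\tfrac q2-\sqrt{\cdot})=-p^3/27$, and finally substitute $x=h\lambda$, $y^2=hb$, $z^2=h\sigma^2$; collecting powers of $h$ then turns $2\lambda+\mu_1<0$ into exactly \eqref{eq:50}.

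The one genuinely laborious step, and the part I expect to be the main obstacle, is this last reconciliation for \eqref{eq:48}: checking that $27y^4z^2+3\sqrt{81y^8z^4+48y^4z^8}+8z^6$ under the cube root equals $27h^3\bigl(-q/2+\sqrt{q^2/4+p^3/27}\bigr)$, that the term $4z^4/(3\sqrt[3]{\cdot})$ reproduces the second Cardano radical, and that $-z^2/3$ accounts for the shift $-\sigma^2/3$. All the rest — the Kronecker bookkeeping, the cofactor expansion, the root count for $q$ — is routine, and for \eqref{eq:49} there is essentially no hard step at all.
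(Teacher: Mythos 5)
Your proposal is correct and follows exactly the route the paper indicates for this (omitted) proof: apply Lemma 4.1 to the $4\times 4$ second-moment matrix $S_{dif}=I\otimes F+F\otimes I+\sum_r G_r\otimes G_r$ and locate its rightmost eigenvalue. The computations check out, including the points you flag as laborious — the characteristic polynomial of \eqref{eq:48} does factor as $(\mu-\sigma^2)\bigl[(\mu-\sigma^2)(\mu+\sigma^2)^2-2b^2\sigma^2\bigr]$, the unique real root of the cubic dominates (since $q(\sigma^2)=-2b^2\sigma^2\le 0$ and the three roots sum to $-\sigma^2$), and Cardano's formula reproduces \eqref{eq:50} after the substitution $x=h\lambda$, $y^2=hb$, $z^2=h\sigma^2$.
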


%For the test system \eqref{eq:47}, stability regions for SSCTM method with different values of parameters $\theta$ and $\eta$ are given on Figure 1. MSSCTM method demonstrates identical behavior. For the test system \eqref{eq:48} results are similar.

\begin{figure}[p]
	\centering
		\begin{subfigure}[t]{0.45\textwidth}
                \centering
                \includegraphics[width=\textwidth]{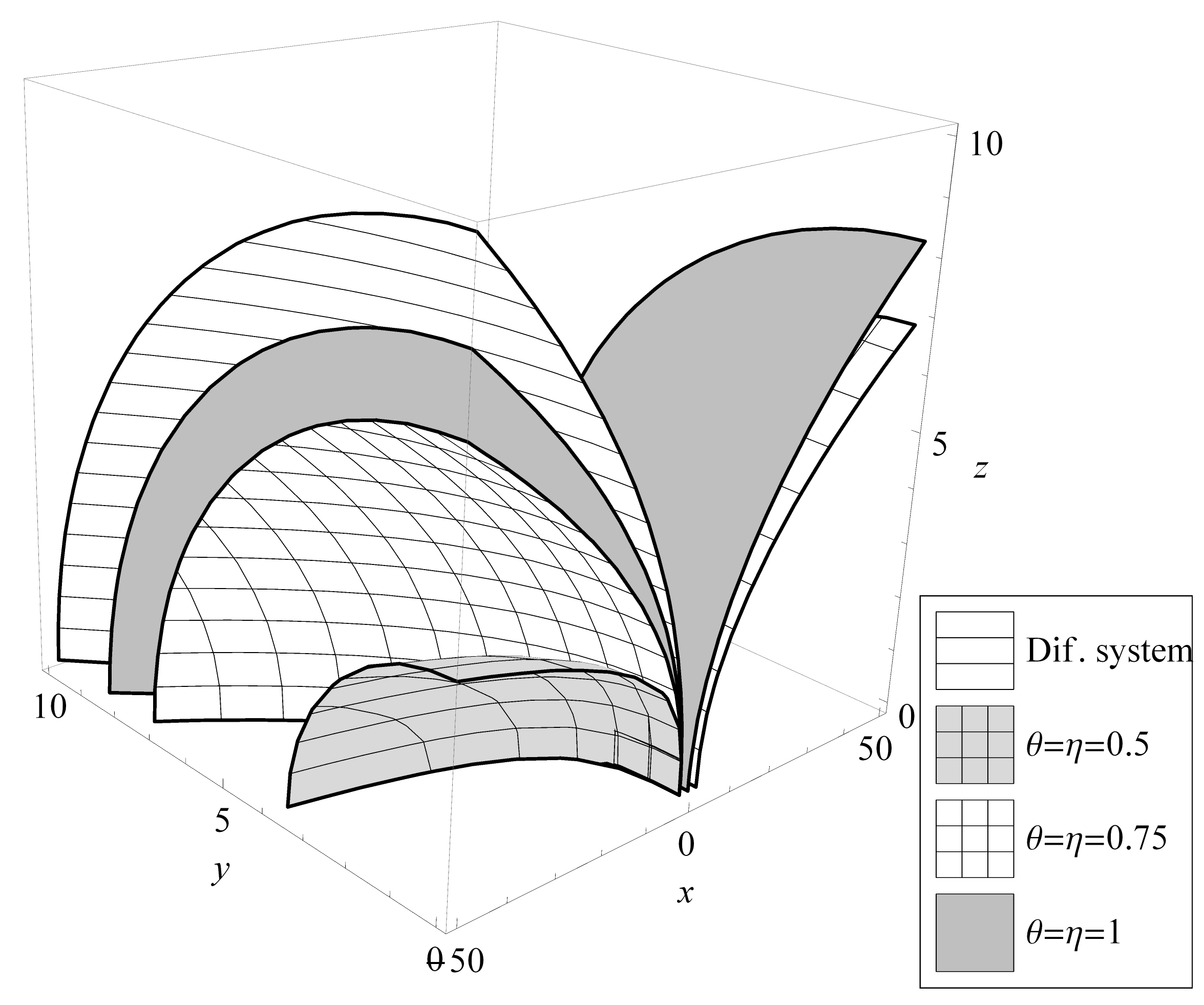}
                \caption{$\theta=\eta$}
        \end{subfigure}
	\qquad
		\begin{subfigure}[t]{0.45\textwidth}
                \centering
                \includegraphics[width=\textwidth]{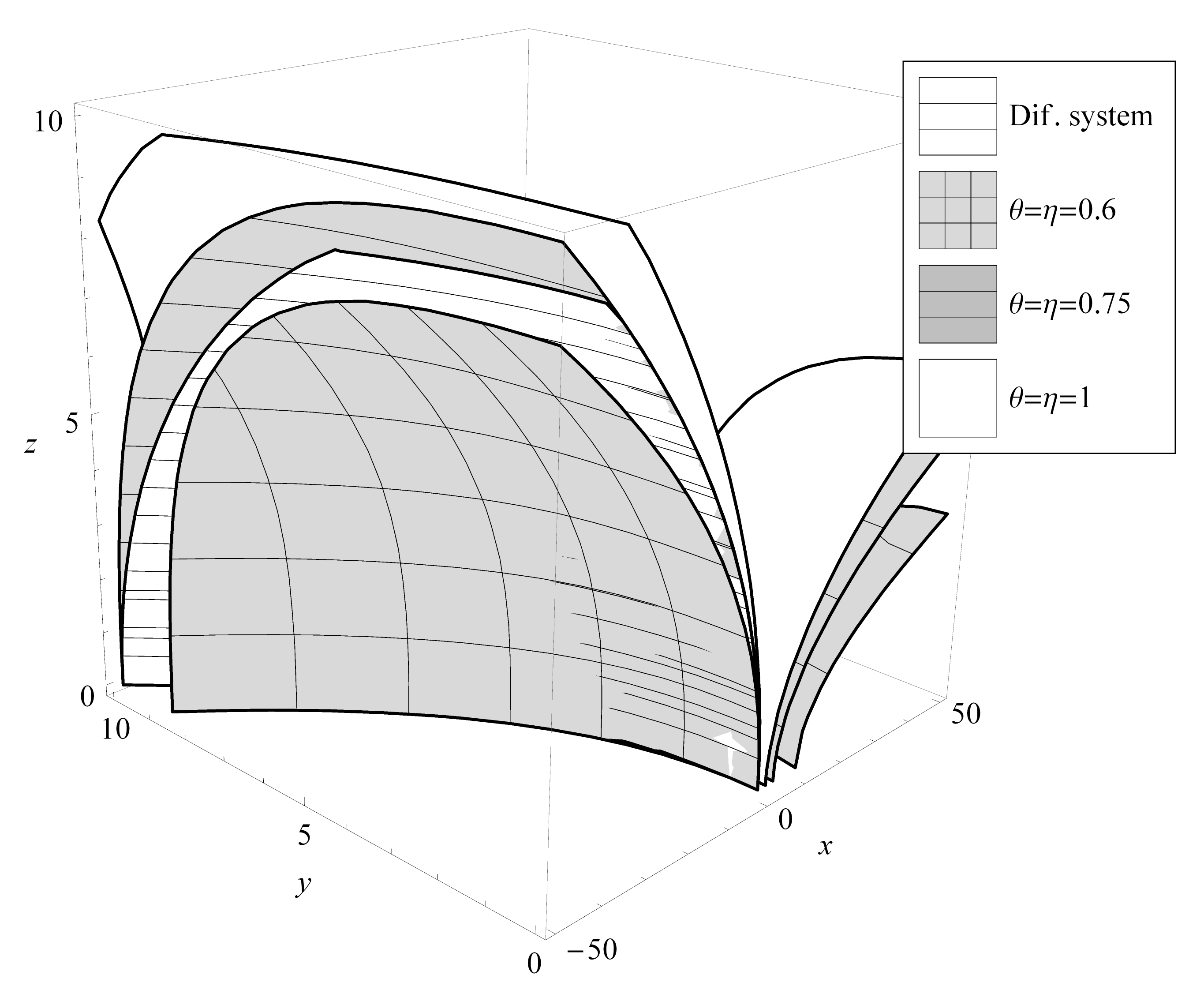}
                \caption{$\theta=\eta$}
        \end{subfigure}
	\qquad
		\begin{subfigure}[t]{0.45\textwidth}
                \centering
                \includegraphics[width=\textwidth]{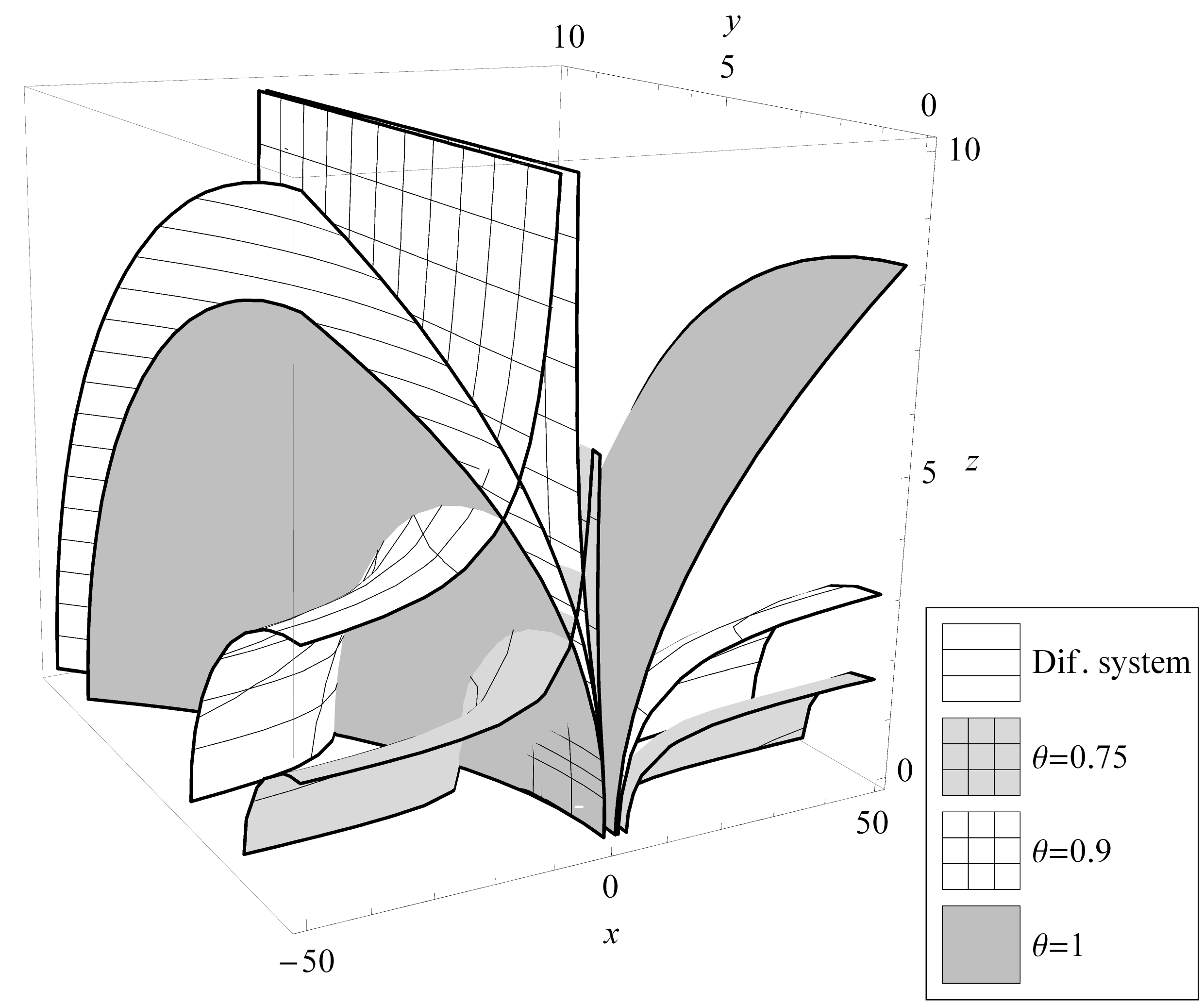}
                \caption{$\eta=1$}
        \end{subfigure}
	\qquad
		\begin{subfigure}[t]{0.45\textwidth}
                \centering
                \includegraphics[width=\textwidth]{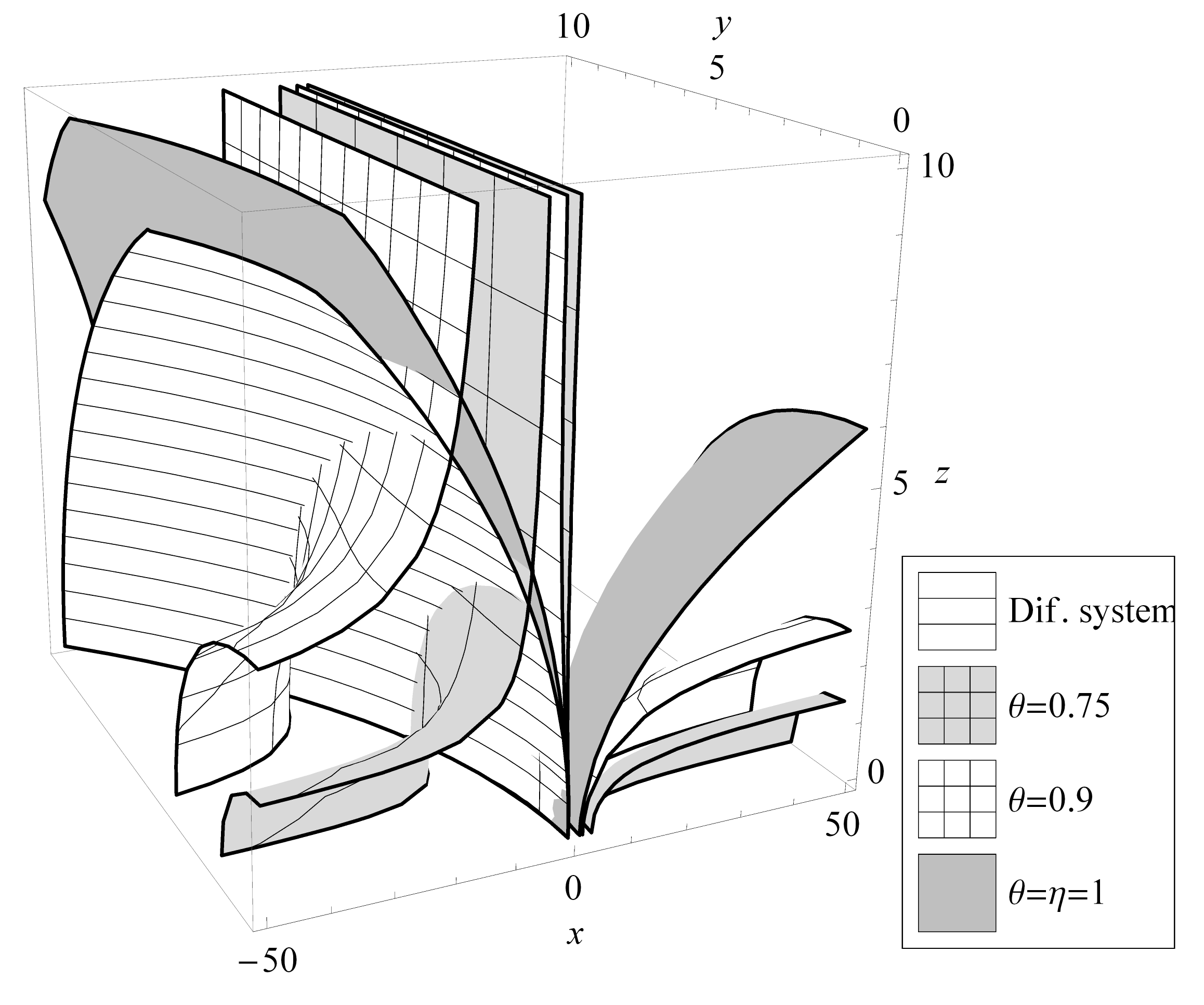}
                \caption{$\eta=1$}
        \end{subfigure}
	\qquad
		\begin{subfigure}[t]{0.45\textwidth}
                \centering
                \includegraphics[width=\textwidth]{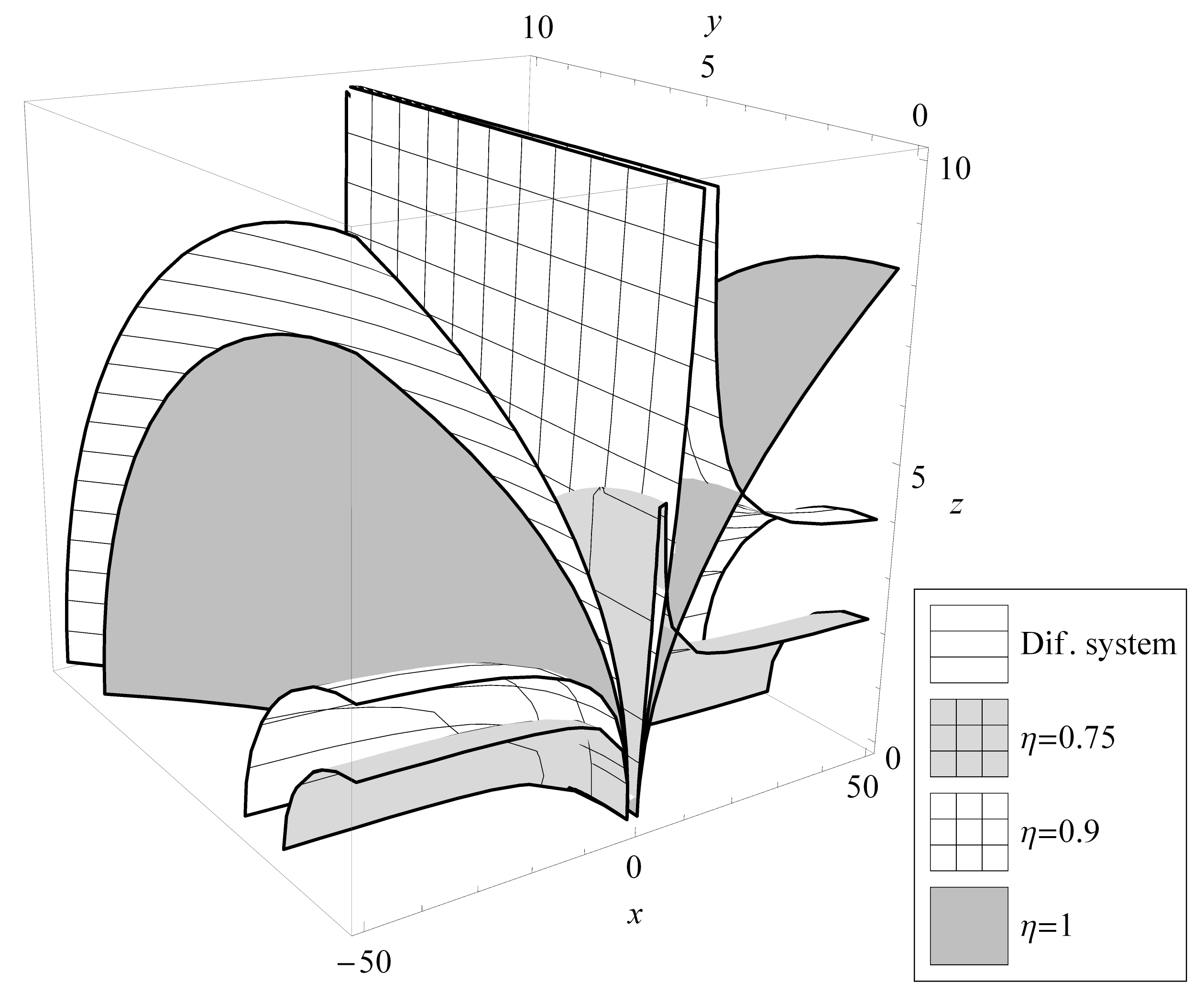}
                \caption{$\theta=1$}
        \end{subfigure}
	\qquad
		\begin{subfigure}[t]{0.45\textwidth}
                \centering
                \includegraphics[width=\textwidth]{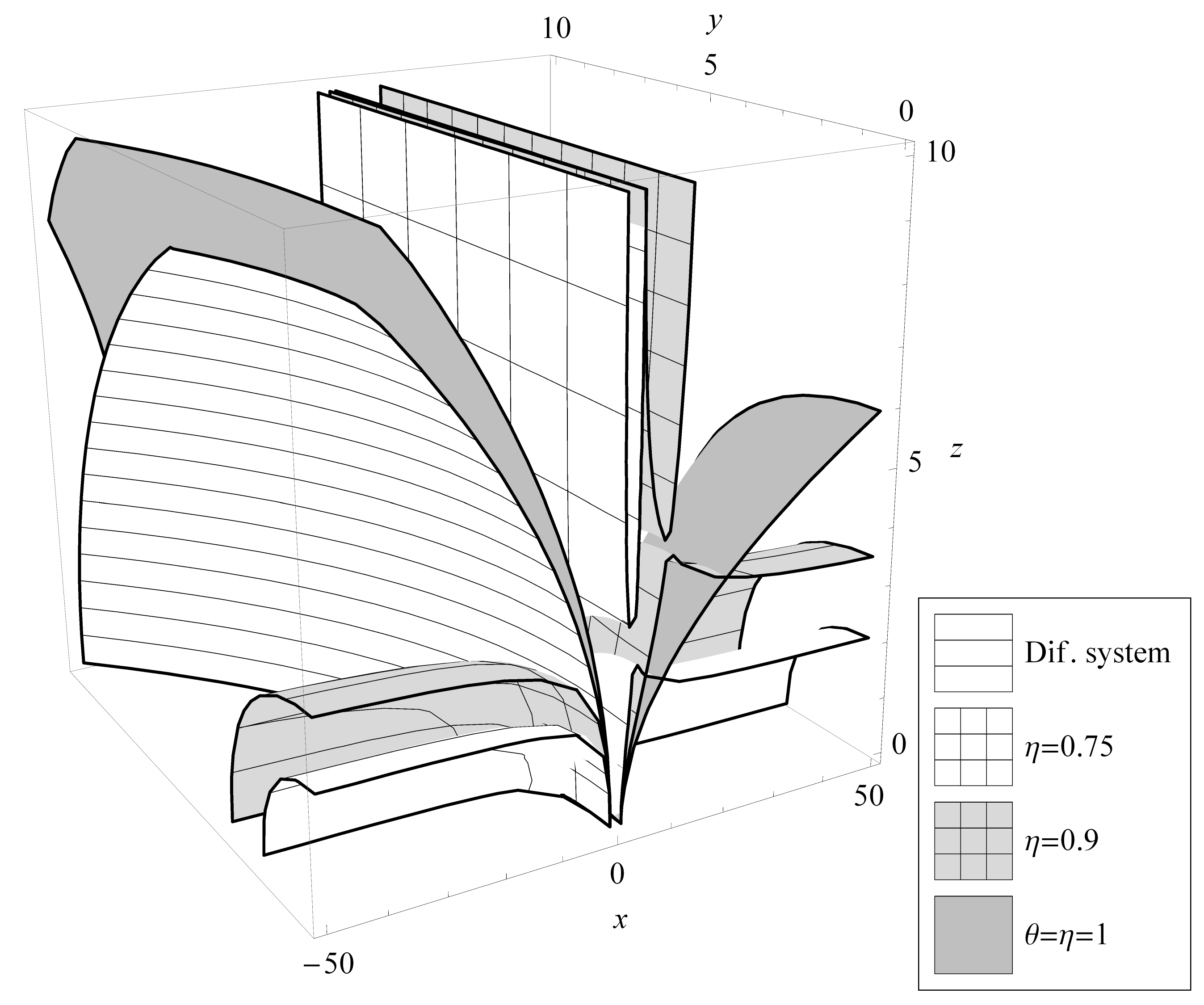}
                \caption{$\theta=1$}
        \end{subfigure}
	\caption{Portions of the mean-square stability regions for the SSCTM  (a, c, e) and MSSCTM (b, d, f) method for different values of parameters $\theta$ and $\eta$ applied to the test systems \eqref{eq:47} }
	\label{fig:1}
\end{figure}

\begin{figure}[p]
	\centering
		\begin{subfigure}[t]{0.45\textwidth}
                \centering
                \includegraphics[width=\textwidth]{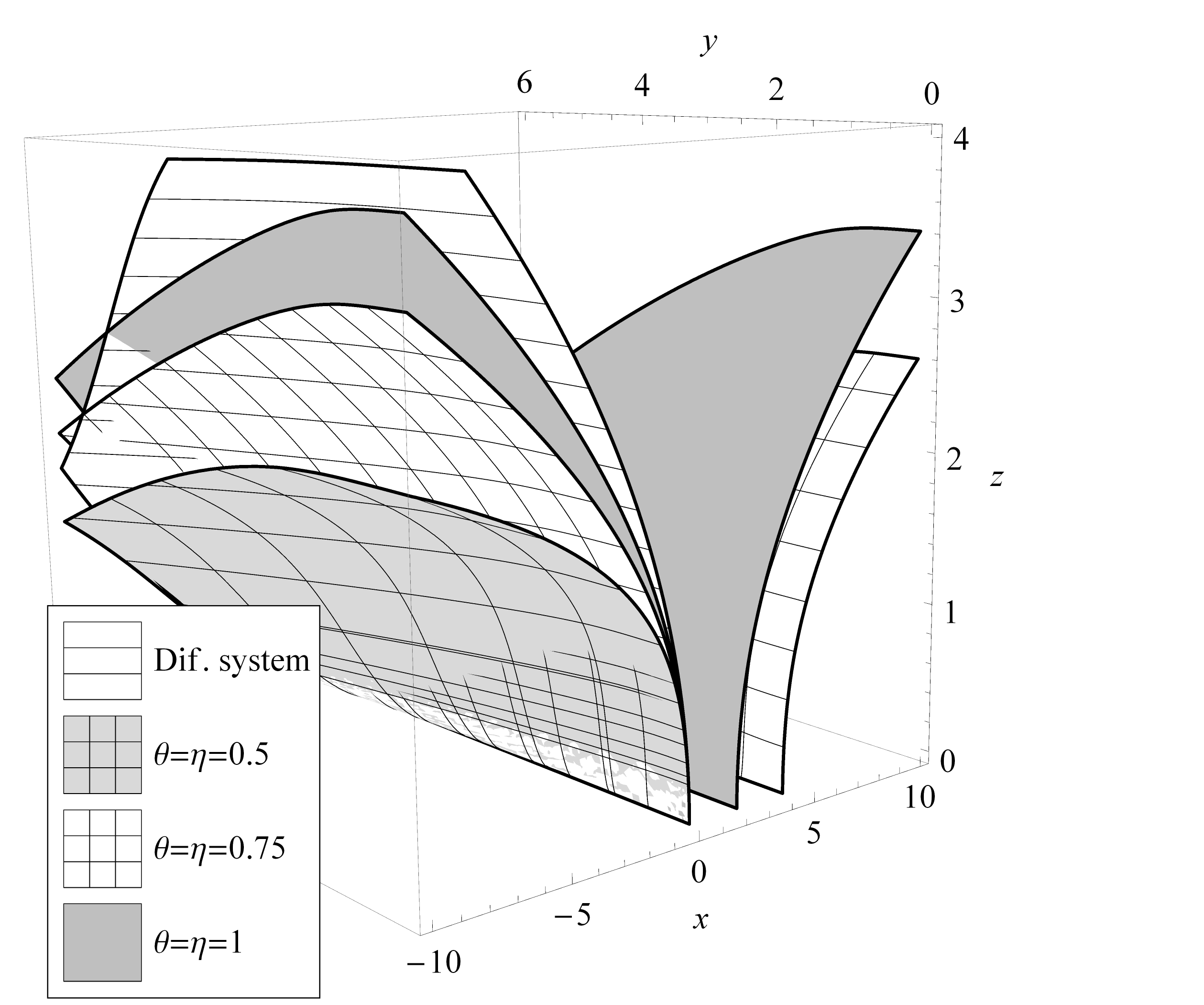}
                \caption{$\theta=\eta$}
        \end{subfigure}
	\qquad
		\begin{subfigure}[t]{0.45\textwidth}
                \centering
                \includegraphics[width=\textwidth]{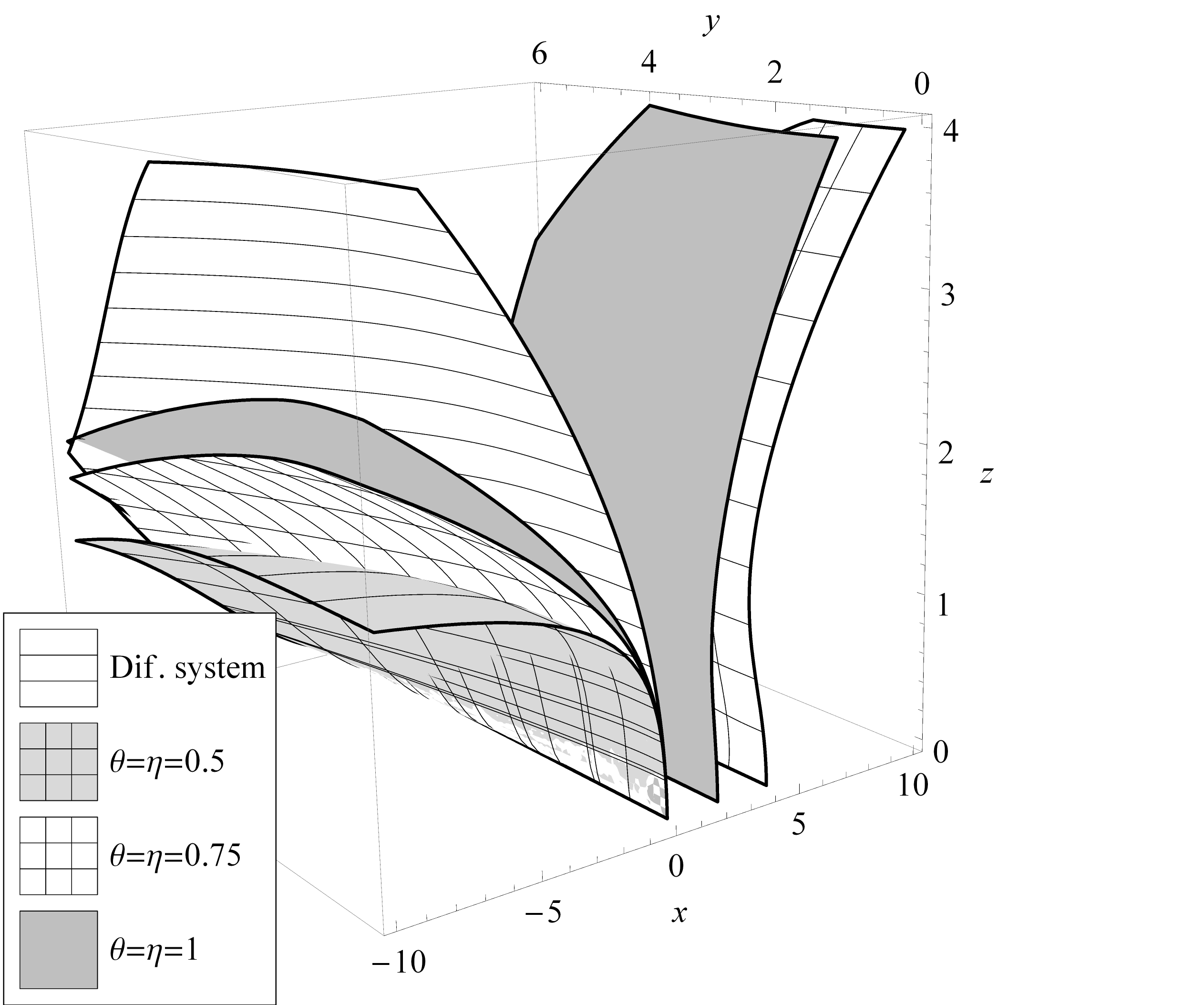}
                \caption{$\theta=\eta$}
        \end{subfigure}
	\qquad
		\begin{subfigure}[t]{0.45\textwidth}
                \centering
                \includegraphics[width=\textwidth]{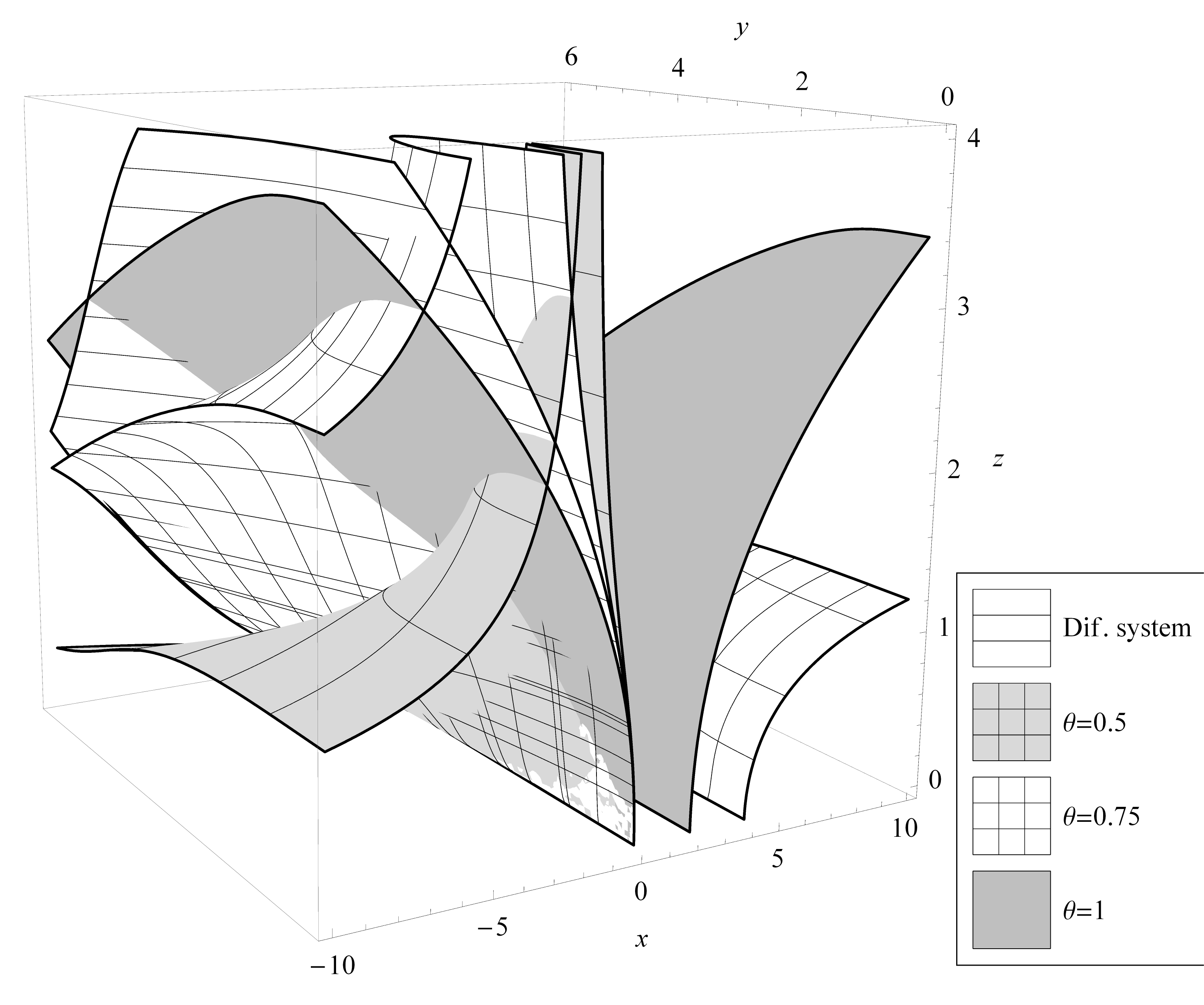}
                \caption{$\eta=1$}
        \end{subfigure}
	\qquad
		\begin{subfigure}[t]{0.45\textwidth}
                \centering
                \includegraphics[width=\textwidth]{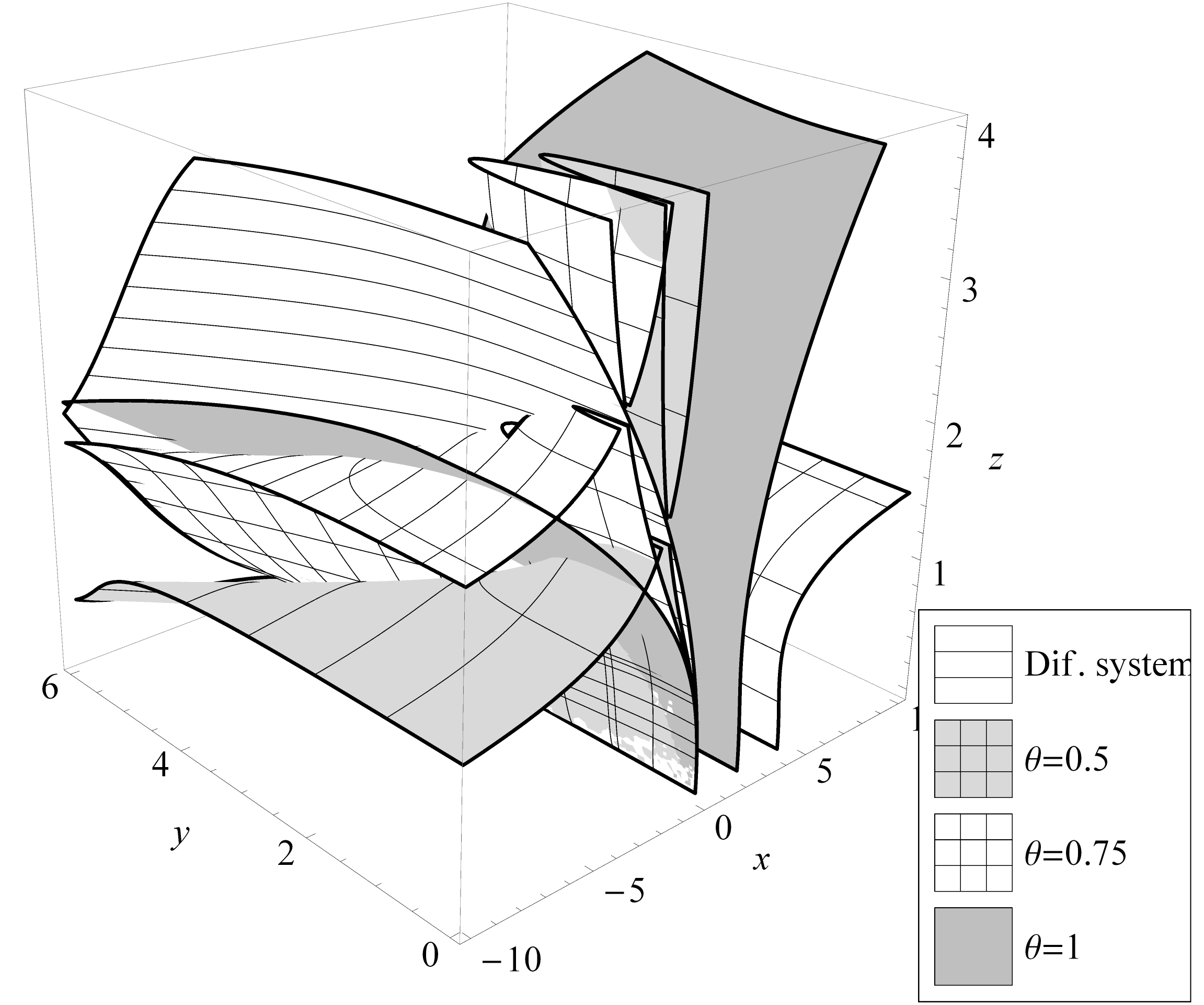}
                \caption{$\eta=1$}
        \end{subfigure}
	\qquad
		\begin{subfigure}[t]{0.45\textwidth}
                \centering
                \includegraphics[width=\textwidth]{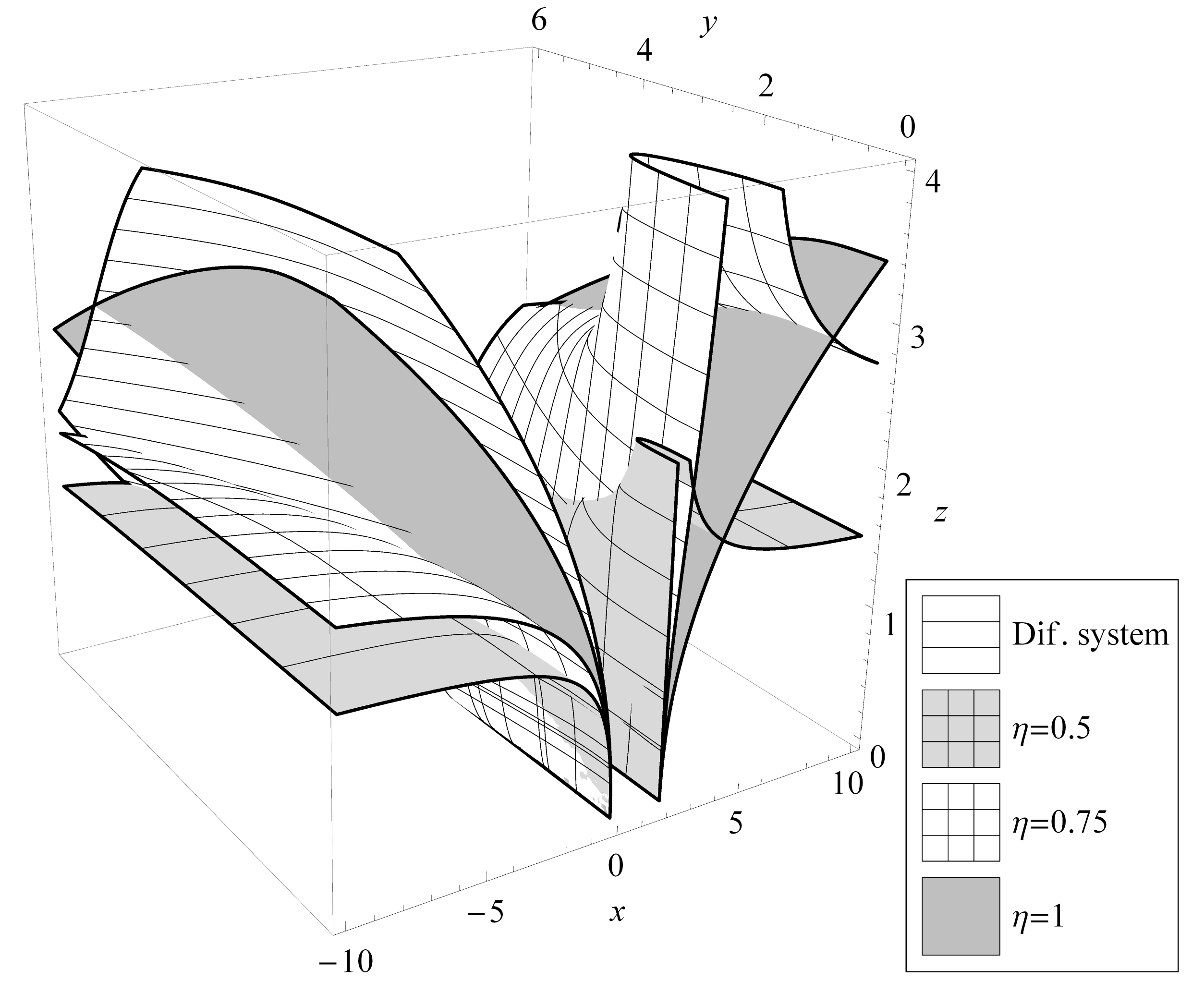}
                \caption{$\theta=1$}
        \end{subfigure}
	\qquad
		\begin{subfigure}[t]{0.45\textwidth}
                \centering
                \includegraphics[width=\textwidth]{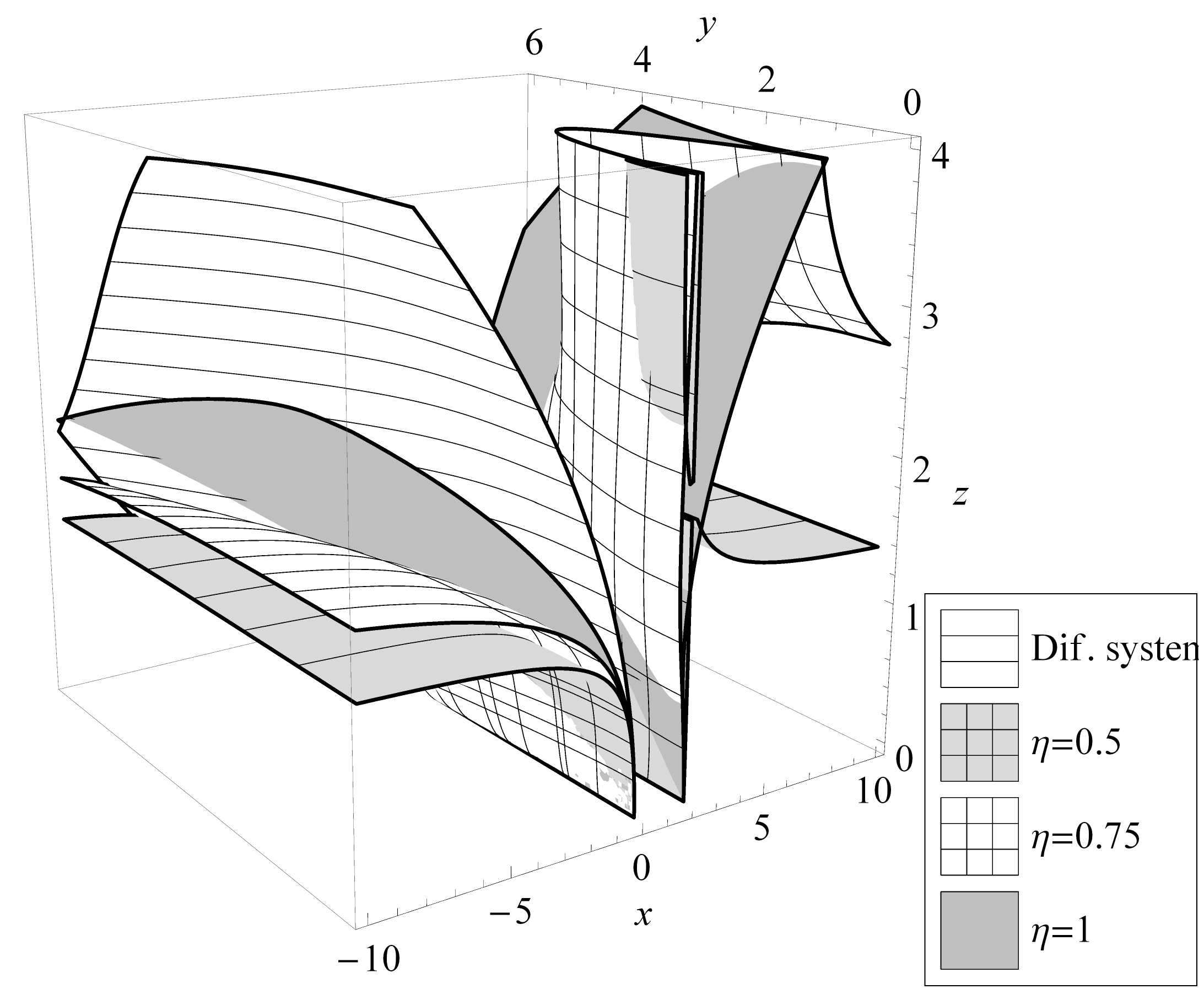}
                \caption{$\theta=1$}
        \end{subfigure}
	\caption{Portions of the mean-square stability regions for the SSCTM  (a, c, e) and MSSCTM (b, d, f) method for different values of parameters $\theta$ and $\eta$ applied to the test systems \eqref{eq:48} }
	\label{fig:2}
\end{figure}

\newblue{
SSCTM and MSSCTM methods contain two parameters, $\theta$ and $\eta$, which determine the degree of implicitness of these schemes.
At a first glance it is reasonable to expect better stability properties of the numerical solution for bigger values of $\theta$ and $\eta$. Figures \ref{fig:1} and \ref{fig:2} show that this is not true in general. One can see that for certain values of the parameters $\lambda$, $b$, $\epsilon$ and $\sigma$ in the test systems \eqref{eq:47} and \eqref{eq:48} SSCTM / MSSCTM methods possess bigger stability regions for different values of $\theta$ and $\eta$. A similar result was obtained in \cite{Guo2013} for the scalar test equation. 
On the other hand it is easy to see that stability properties of the original differential system in the whole stability domain are best recovered for the parameter values $\theta=\eta \in [0.75,1]$.

For the general differential system, the problem of finding the optimal parameters $\theta$ and $\eta$ is very difficult by itself.
Therefore it is reasonable to set $\theta=\eta=1$ unless one knows explicitly how to choose the best parameters.
These values correspond to DSSBM and MSSBM methods which were discussed in Section 2. Therefore in the following we provide stability conditions for these methods only. 

}

\begin{Lemma}
For the test equation \eqref{eq:47} the zero solution of stochastic difference equation is asymptotically stable if and only if it satisfies the following conditions:

a) for DSSBM method

\begin{align}\label{eq:52}
%	S_{DSSBM} = 
	1 - 2 (1-x)^2+\left(y^2+z^2+1\right)^2 <0
\end{align}

b) for MSSBM method

\begin{align}\label{eq:53}
%	S_{MSSBM} = 
%	S_{DSSBM} + 2 x \left(y^2+z^2\right) - y^2 z^2 < 0
	\left(y^2+z^2+1\right)^2 + 2 x \left(y^2+z^2-x+2\right)-\left(y^2 z^2+1\right) < 0
%	-\left(-2 x+y^2-z^2+2\right)^2+3 \left(y^4+z^4\right)+4 y^2 z^2+8 y^2+4 <0
\end{align}

c) for SSAMM method

\begin{align}\label{eq:54}
	\left(\left(4 \theta ^2+4 \theta -1\right) x^2+8 \theta  x+4\right)^2 \left(\left(y^2+z^2+1\right)^2+1\right)-2 ((2 \theta -1) x+2)^4 <0
\end{align}

d) for MSSAMM method

\begin{align}\label{eq:54a}
	\nonumber
	&\left(\left(4 \theta ^2+4 \theta -1\right) x^2+8 \theta  x+4\right)^2 
	\left( 3(y^2+z^2)^2 + 8(y^2+z^2) - 2 y^2 z^2 + 4 \right)
	\\
	&-4 \Big( (2 \theta -1) x+2 \Big)^2 \left((2 \theta -1) + 2 x+y^2+z^2\right)^2 < 0
\end{align}
where $x$, $y$ and $z$ are defined as in \eqref{eq:49}.

\end{Lemma}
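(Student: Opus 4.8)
The plan is to specialise the general stability matrix of Theorem~4.2 to the test system \eqref{eq:47} and then apply the spectral radius criterion of Lemma~4.2. The decisive feature of \eqref{eq:47} is that $F=\lambda I$ while $G_1^2=\epsilon^2 I$ and $G_2^2=\sigma^2 I$ are scalar multiples of the identity; consequently every deterministic stability matrix in \eqref{eq:43}--\eqref{eq:46}, evaluated at $F=\lambda I$, collapses to a scalar multiple $pI$ of the identity, and --- taking $\eta=1$ as motivated in the discussion preceding the Lemma --- the matrix $Q=\eta I+(1-\eta)P^{-1}$ equals $I$, so that $QP\otimes QP=p^{2}I_{4}$. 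A short computation gives $p_{\mathrm{DSSBM}}=(1-x)^{-1}$, $p_{\mathrm{MSSBM}}=\bigl(1-x+\tfrac12(y^{2}+z^{2})\bigr)^{-1}$ and, with $\gamma_{1}=\tfrac12-\theta$, $\gamma_{2}=\tfrac12+\theta$,
\[
p_{\mathrm{SSAMM}}=\frac{(4\theta^{2}+4\theta-1)x^{2}+8\theta x+4}{\bigl((2\theta-1)x+2\bigr)^{2}},
\]
with the analogous expression for $p_{\mathrm{MSSAMM}}$ obtained from \eqref{eq:46}.

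The second step is to evaluate the stochastic part $S_{Mil}^{stoch}$ of Theorem~4.1 on \eqref{eq:47}. Since $G_{2}G_{1}=-G_{1}G_{2}$, one has $\sum_{r_{1}\neq r_{2}}G_{r_{1}}G_{r_{2}}=G_{1}G_{2}+G_{2}G_{1}=0$ while $\sum_{r_{1}<r_{2}}G_{r_{1}}G_{r_{2}}-\sum_{r_{1}>r_{2}}G_{r_{1}}G_{r_{2}}=2G_{1}G_{2}$, and the noise is non-commutative so $\delta_{com}=1$. Setting $D=\epsilon^{-2}\,G_{1}\otimes G_{1}=\mathrm{diag}(1,-1,-1,1)$, $J=\sigma^{-2}\,G_{2}\otimes G_{2}$ and $K=(\epsilon\sigma)^{-2}\,(G_{1}G_{2})\otimes(G_{1}G_{2})$, and using $x=h\lambda$, $y^{2}=h\epsilon^{2}$, $z^{2}=h\sigma^{2}$, this reduces to
\[
I_{4}+S_{Mil}^{stoch}=\Bigl(1+\tfrac12(y^{4}+z^{4})\Bigr)I_{4}+y^{2}D+z^{2}J+y^{2}z^{2}K .
\]
Because $G_{2}G_{1}=-G_{1}G_{2}$, the matrices $G_{1}\otimes G_{1}$ and $G_{2}\otimes G_{2}$ commute and $K=DJ=JD$, so $D$, $J$, $K$ are simultaneously diagonalised in the fixed orthogonal basis $\{e_{1}+e_{4},\,e_{1}-e_{4},\,e_{2}+e_{3},\,e_{2}-e_{3}\}$ of $\mathbb{R}^{4}$, on which they act with eigenvalue quadruples $(1,1,-1,-1)$, $(1,-1,1,-1)$ and $(1,-1,-1,1)$. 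The extra terms of the modified matrix \eqref{eq:41} --- which for $Q=I$, together with the Itô-to-Stratonovich correction, collapse to the scalar matrix $\bigl((y^{2}+z^{2})+\tfrac14(y^{4}+z^{4})\bigr)I_{4}$ --- do not alter this eigenbasis. Reading off eigenvalues, the one attached to $e_{1}+e_{4}$ is $\mu_{\max}=1+(y^{2}+z^{2})+\tfrac12(y^{2}+z^{2})^{2}=\tfrac12\bigl((y^{2}+z^{2}+1)^{2}+1\bigr)$ in the non-modified case, and its shift by $(y^{2}+z^{2})+\tfrac14(y^{4}+z^{4})$ in the modified case; since $\mu_{\max}\pm\mu_{i}\geq0$ for each of the other three eigenvalues $\mu_{i}$ (for all $y^{2},z^{2}\geq0$), $\mu_{\max}$ is the spectral radius of $I_{4}+S_{Mil}^{stoch}$ (resp. of its modified counterpart), and hence $\rho(S)=p^{2}\mu_{\max}$.

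Finally, Lemma~4.2 says the zero solution of the difference scheme is asymptotically mean-square stable if and only if $\rho(S)=p^{2}\mu_{\max}<1$. Substituting each scalar $p$ and clearing the denominator converts this inequality into \eqref{eq:52} for DSSBM, \eqref{eq:53} for MSSBM, \eqref{eq:54} for SSAMM and \eqref{eq:54a} for MSSAMM; for instance $p_{\mathrm{SSAMM}}^{2}\cdot\tfrac12\bigl((y^{2}+z^{2}+1)^{2}+1\bigr)<1$ becomes exactly \eqref{eq:54} after multiplying through by $\bigl((2\theta-1)x+2\bigr)^{4}$. I expect the main obstacle to be the algebra for the two modified methods: correctly combining the extra terms of \eqref{eq:41} with the replacement of Itô by Stratonovich multiple integrals and with the $-\tfrac{h}{2}\sum_{r}G_{r}^{2}$ term carried into the first stage of \eqref{eq:5}, and then simplifying the resulting rational expressions in $x$, $y^{2}$, $z^{2}$ to the stated closed forms.
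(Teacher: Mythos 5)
Your proposal is correct and follows exactly the route the paper intends (the paper omits these proofs, stating only that they follow from the particular form of the stability matrices and Lemmas 4.1--4.2): you specialise $P$, $Q$ and $S_{Mil}^{stoch}$ to \eqref{eq:47}, use that $F$, $G_1^2$, $G_2^2$ are scalar so $QP\otimes QP=p^2I_4$, simultaneously diagonalise $G_1\otimes G_1$, $G_2\otimes G_2$ and $(G_1G_2)\otimes(G_1G_2)$ on $\{e_1\pm e_4,\,e_2\pm e_3\}$, identify $\mu_{\max}=\tfrac12\bigl((y^2+z^2+1)^2+1\bigr)$ (shifted by $(y^2+z^2)+\tfrac14(y^4+z^4)$ in the modified case) as the spectral radius, and reduce $\rho(S)=p^2\mu_{\max}<1$ to the stated inequalities --- and I have checked that this reproduces \eqref{eq:52}, \eqref{eq:53} and \eqref{eq:54} exactly. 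The only caveat concerns part (d), which you leave as ``analogous'': carrying your computation through gives the factor $\bigl((2\theta-1)x+2+y^2+z^2\bigr)^2$ rather than the printed $\bigl((2\theta-1)+2x+y^2+z^2\bigr)^2$, which indicates a typo in \eqref{eq:54a} rather than a flaw in your argument.
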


\begin{figure}[h]
	\centering
        \begin{subfigure}[t]{0.45\textwidth}
                \centering
                \includegraphics[width=\textwidth]{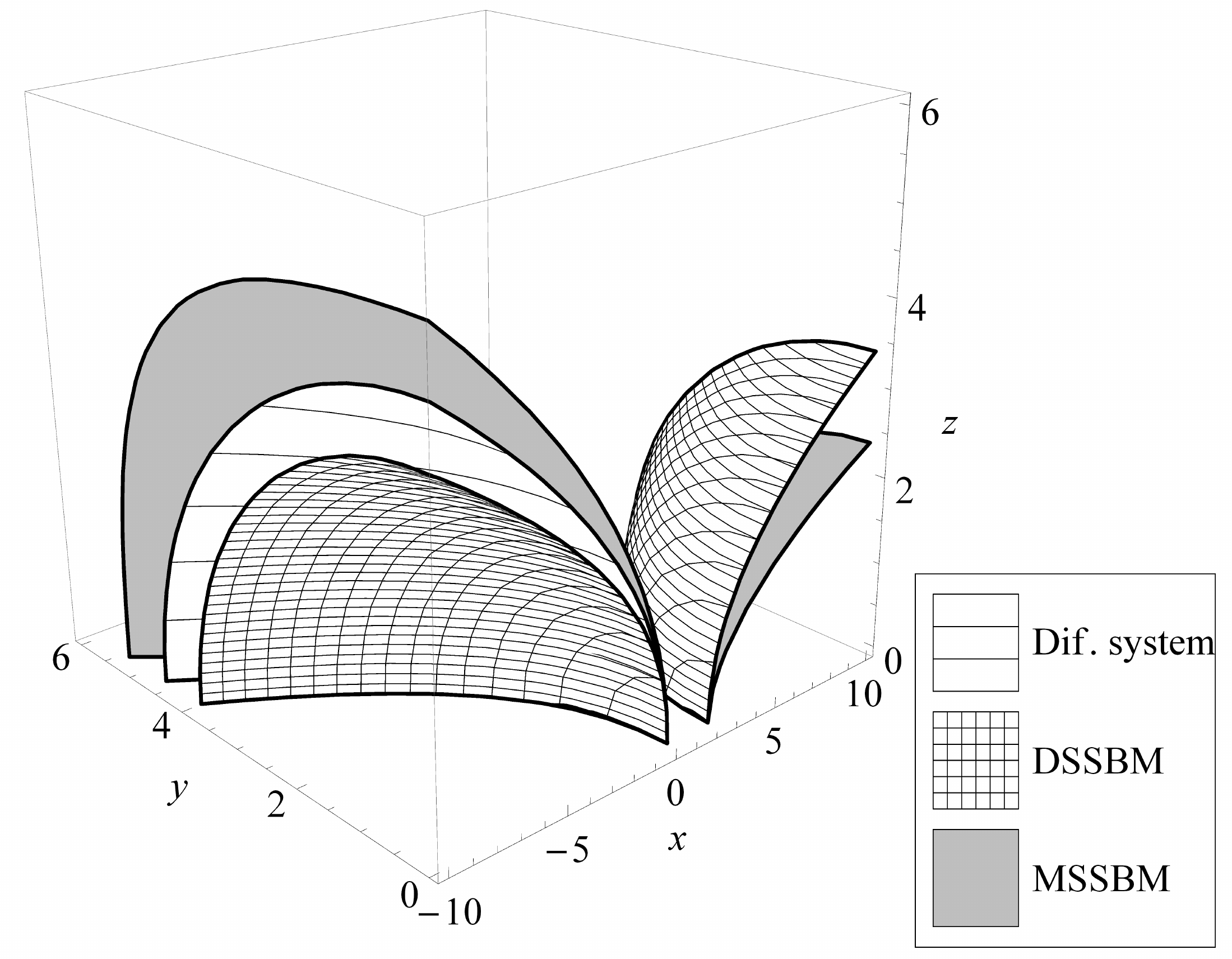}
                \caption{DSSBM and MSSBM}
        \end{subfigure}
	\qquad
		\begin{subfigure}[t]{0.45\textwidth}
                \centering
                \includegraphics[width=\textwidth]{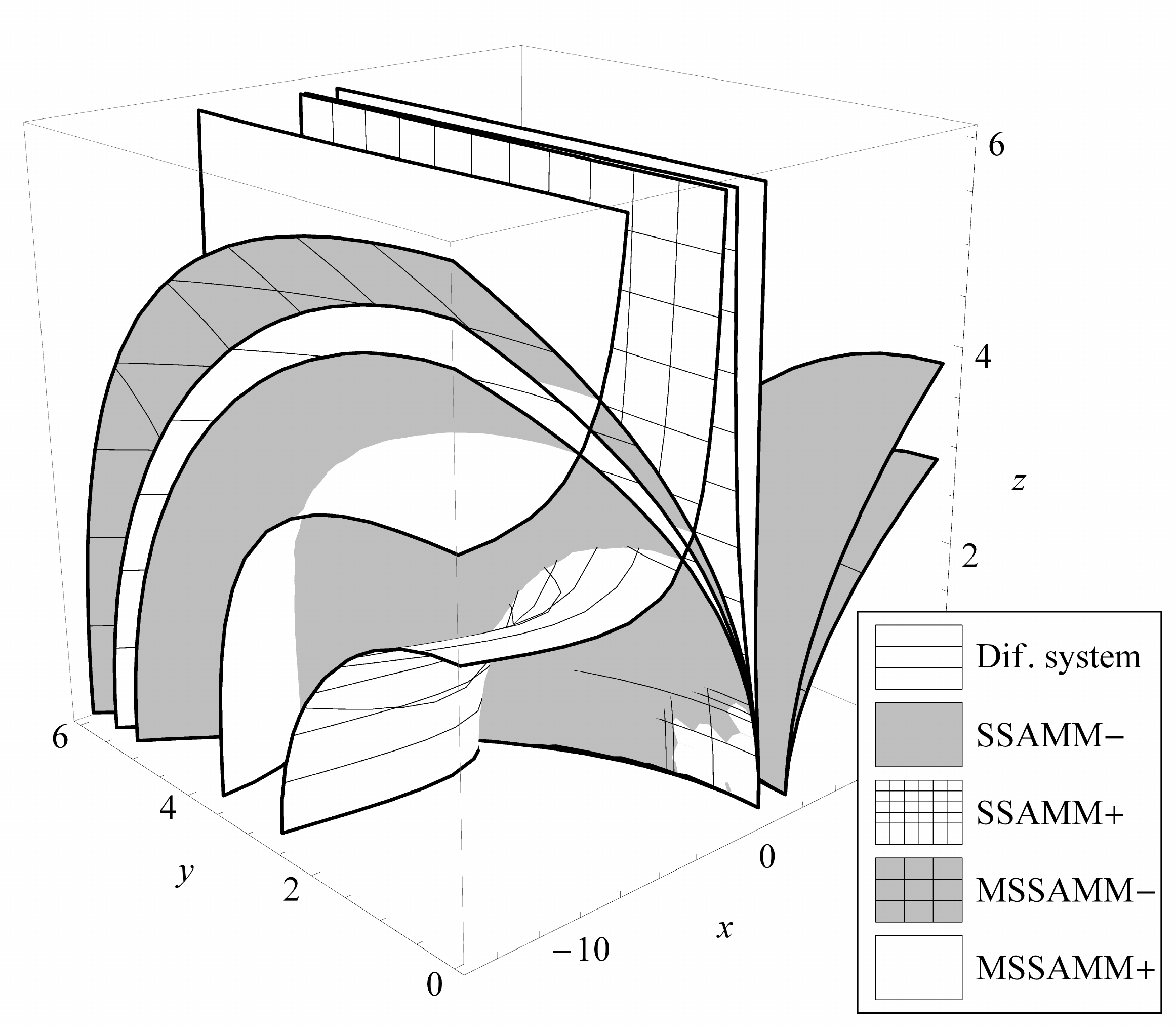}
                \caption{SSAMM}
        \end{subfigure}
	\centering
        \begin{subfigure}[t]{0.45\textwidth}
                \centering
                \includegraphics[width=\textwidth]{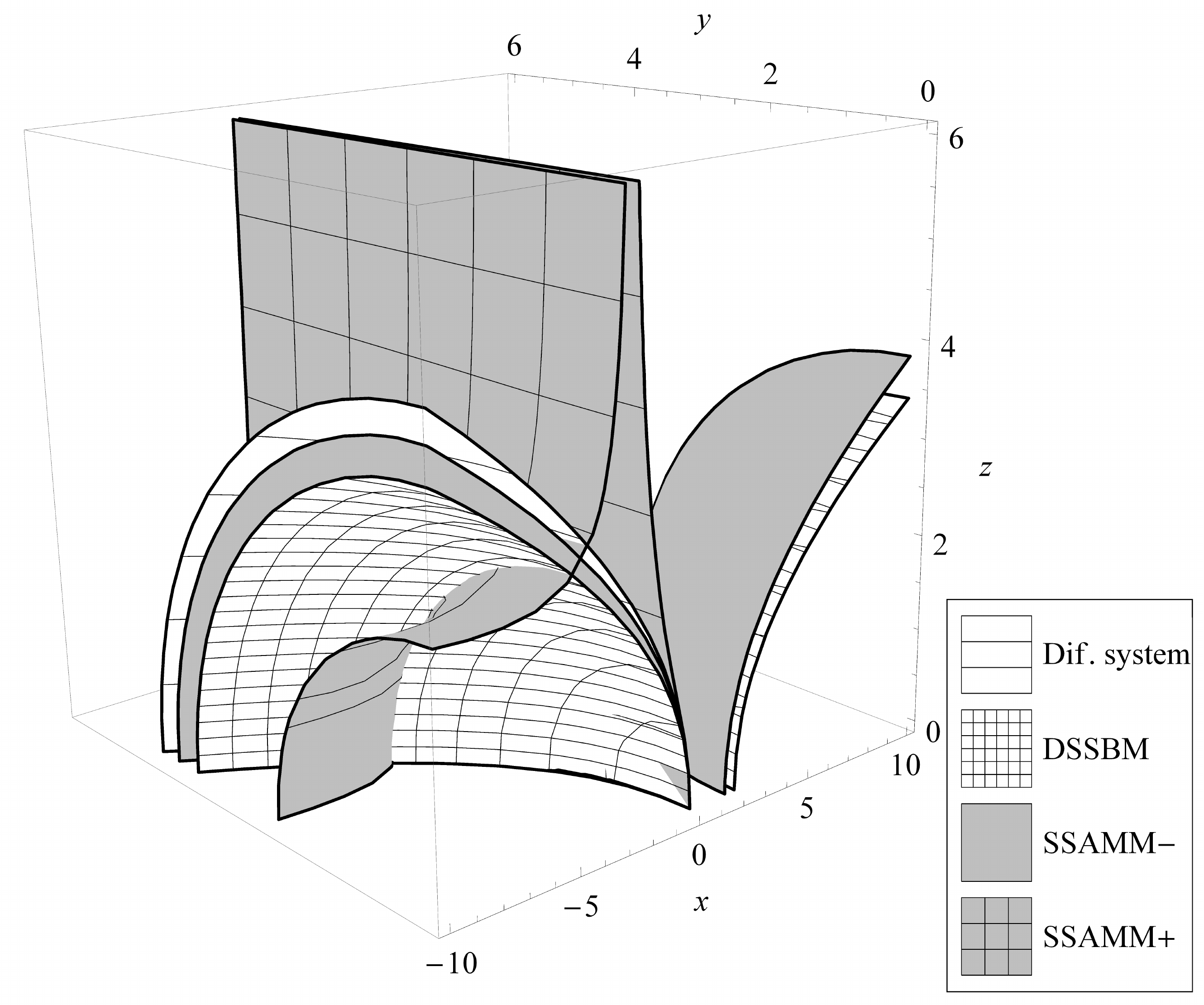}
                \caption{Split-step methods}
        \end{subfigure}
	\qquad
		\begin{subfigure}[t]{0.45\textwidth}
                \centering
                \includegraphics[width=\textwidth]{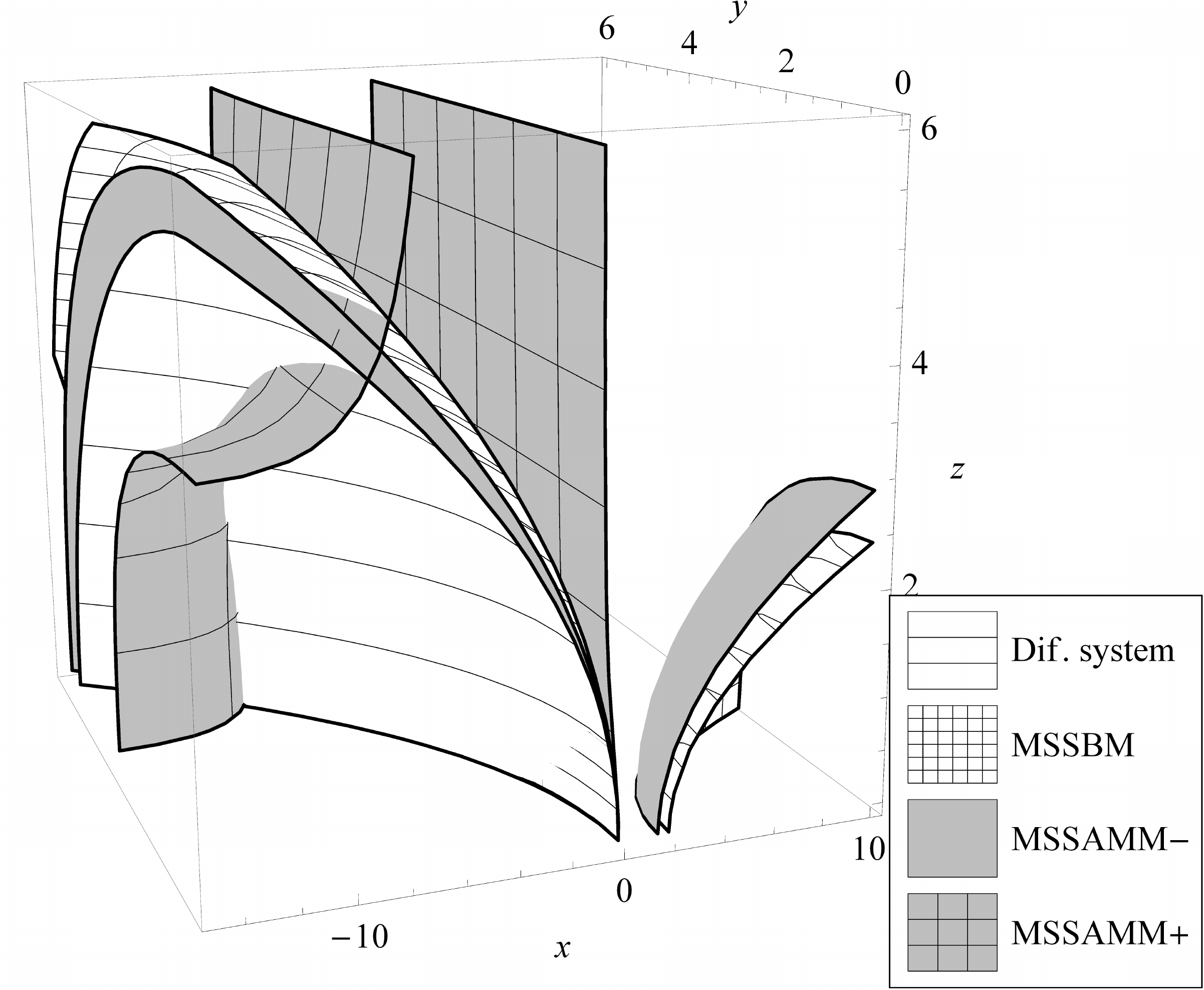}
                \caption{Modified split-step methods}
        \end{subfigure}
    \caption{Portions of the mean-square stability regions for the split-step and modified split-step methods applied to test system \eqref{eq:47}}.
	\label{fig:3}
\end{figure}

{\color{black} Figure \ref{fig:3} displays the MS-stability regions of the split-step methods \eqref{eq:4} and \eqref{eq:5} showing better stability properties of the latter when applied to the test system \eqref{eq:47}. Figure 3 illustrates the impact arising from the choice of the drift integrator. One can see that both regular and modified variants of SSAMM- method are most similar in properties to the original differential system making this method very attractive for practical calculations. On the other hand it is clear that SSAMM+ method is not the best choice for solving systems of this type.}

The following Lemma gives the stability conditions for the stochastic difference equations applied to the system with non-normal drift structure.

\begin{Lemma} For the test system \eqref{eq:48} stability condition cannot be written in the closed form and is determined as a spectral radius of the following stability matrix

\begin{align}\label{eq:56}
\left(
\begin{array}{cccc}
 a_1 & a_2 & a_2 & a_3 + a_5 \\
 0 & a_1 & -a_3 & a_2-a_4 \\
 0 & -a_3 & a_1 & a_2-a_4 \\
 a_3 & a_4 & a_4 & a_1 + a_6 \\
\end{array}
\right)
\end{align}
where
\end{Lemma}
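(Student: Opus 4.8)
The plan is to specialize the general split-step stability matrix of Theorem~4.2 to the two-dimensional system~\eqref{eq:48} and to read off the resulting $4\times4$ matrix. First I would observe that \eqref{eq:48} carries a single noise channel, so the commutativity condition holds trivially and $\delta_{com}=0$ in \eqref{eq:31a}; moreover the antisymmetric diffusion matrix $G=\sigma\left(\begin{smallmatrix}0&1\\-1&0\end{smallmatrix}\right)$ satisfies $G^{2}=-\sigma^{2}I$. Substituting this into $S_{Mil}^{stoch}$ from Theorem~4.1 collapses the stochastic part to $h\,(G\otimes G)+\tfrac{h^{2}\sigma^{4}}{2}\,(I\otimes I)$, and for the modified scheme the extra terms in \eqref{eq:41} reduce to multiples of $I\otimes Q^{-1}$, $Q^{-1}\otimes I$ and $I\otimes I$. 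Because the drift matrix $F$ in \eqref{eq:48} is upper triangular with a repeated eigenvalue $\lambda$, every rational function of $F$ — in particular the deterministic stability matrices \eqref{eq:43}--\eqref{eq:46} — is upper triangular with equal diagonal entries; hence so are $Q$, $Q^{-1}$ and $QP$, and this is what keeps the bookkeeping finite.

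Next I would compute the two ingredients of \eqref{eq:39} (respectively \eqref{eq:41}) explicitly. Writing $P=\left(\begin{smallmatrix}\pi&\pi'\\0&\pi\end{smallmatrix}\right)$, one has $(Q\otimes Q)^{-1}(QP\otimes QP)=(Q^{-1}QP)\otimes(Q^{-1}QP)=P\otimes P$, which is upper triangular with diagonal $\pi^{2}$ and with the pattern $\left\{(1,2),(1,3)\right\}\mapsto\pi\pi'$, $(1,4)\mapsto(\pi')^{2}$, $\left\{(2,4),(3,4)\right\}\mapsto\pi\pi'$. The only non-triangular contribution comes from $h\,(G\otimes G)$, which, since $G$ is a scalar multiple of the rotation generator, has nonzero entries exactly in the anti-block positions $(1,4),(2,3),(3,2),(4,1)$ with values $\pm h\sigma^{2}$; multiplying $G\otimes G$ on the right by the upper-triangular $QP\otimes QP$ preserves this sparsity and merely scatters the entries of $QP\otimes QP$ into those four rows. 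Adding the diagonal pieces $\tfrac{h^{2}\sigma^{4}}{2}(QP\otimes QP)$ (and, for the modified method, the further upper-triangular pieces) one finds that every entry of $S$ is forced, by the repeated diagonal of $P$ and $QP$ and by the single anti-block of $G\otimes G$, to take one of six values; collecting them defines $a_{1},\dots,a_{6}$ as polynomials in $x=h\lambda$, $y^{2}=hb$, $z^{2}=h\sigma^{2}$ and the method parameters, and $S$ acquires exactly the shape displayed in \eqref{eq:56}. The modified variant alters only the diagonal and the entries $a_{5},a_{6}$, so the template \eqref{eq:56} is common to all four methods.

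Finally, Lemma~4.2 gives that the zero solution of the discrete recursion $X_{n+1}=R_{n}QP\,X_{n}$ is asymptotically mean-square stable if and only if $\rho(S)<1$, i.e. if and only if all four roots of the characteristic polynomial of \eqref{eq:56} lie strictly inside the unit disc. For $b\neq0$ this quartic does not factor into polynomials of degree $\le2$ with manageable coefficients, so there is no usable radical expression for the boundary of the stability region — this is precisely the sense in which the stability condition ``cannot be written in closed form''; in the figures that follow one instead evaluates $\rho(S)$ numerically over a grid of $(x,y,z)$. I expect the main obstacle to be purely computational: one must verify carefully that all off-diagonal entries really do collapse into the six values of \eqref{eq:56}, and keep the dependence on $\theta$, $\gamma_{1}=\tfrac12-\theta$, $\gamma_{2}=\tfrac12+\theta$ and $\eta$ straight through the inversions $P^{-1}$ and $(Q\otimes Q)^{-1}$. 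Once the upper-triangular structure of $P$ and the identity $G^{2}=-\sigma^{2}I$ are exploited, no conceptual difficulty remains.
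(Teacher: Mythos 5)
Your proposal is correct and follows essentially the same route as the paper, which deliberately omits the computation and states that the result follows from the form of the stability matrix in Theorem 4.2 together with Lemmas 4.1 and 4.2: you specialize $S=\mathsf{E}(R_n\otimes R_n)(QP\otimes QP)$ to system \eqref{eq:48}, use $G^2=-\sigma^2 I$ and the upper-triangular structure of $P$ and $QP$ to reduce everything to $P\otimes P$ (or $QP\otimes QP$) plus the single anti-block contribution of $h\,(G\otimes G)(QP\otimes QP)$, and the entries collapse to the six quantities $a_1,\dots,a_6$ exactly as in \eqref{eq:56}. The computation checks out against the listed DSSBM coefficients, so no gap remains.
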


\textit{a) for DSSBM method}
\begin{align*}
	a_1 &= \frac{z^4+2}{2 (x-1)^2},
	\qquad
	a_2 = -\frac{y^2 \left(z^4+2\right)}{2 (x-1)^3},
	\qquad
	a_3 = \frac{z^2}{(x-1)^2},
	\\
	a_4 &= -\frac{y^2 z^2}{(x-1)^3},
	\qquad
	a_5 = \frac{ (z^4+2) y^4}{2(x-1)^4},
	\qquad
	a_6 = \frac{ z^2 y^4}{(x-1)^4},
\end{align*}

\textit{b) for MSSBM method}

\begin{align*}
	a_1 &= \frac{3 z^4 - 4z^2 + 4}{\left(-z^2 - 2x + 2 \right)^2},
	\qquad
	a_2 = \frac{2 y^2 \left(3 z^4 - 4z^2 + 4\right)}{\left(-z^2 - 2x + 2\right)^3},
	\qquad
	a_3 = \frac{4 z^2}{\left(-z^2 - 2x + 2\right)^2},
	\\
	a_4 &= \frac{8 y^2 z^2}{\left(-z^2 - 2x + 2\right)^3},
	\qquad
	a_5 = \frac{4 y^4 \left(3 z^4 - 4z^2 + 4\right)}{\left(-z^2 - 2x + 2\right)^4},
	\qquad
	a_6 = \frac{16 z^2 y^4}{\left(-z^2 - 2x + 2\right)^4},
\end{align*}

\textit{c) for SSAMM method}

\begin{align*}
	a_1 &= \frac{\left(z^4+2\right) \left((4 \theta  (\theta +1)-1) x^2+8 \theta  x+4\right)^2}{2 (-2 \theta  x+x-2)^4},
	\\
	a_2 &= \frac{2 y^2 \left(z^4+2\right) ((6 \theta -1) x+2) \left((4 \theta  (\theta +1)-1) x^2+8 \theta  x+4\right)}{((2 \theta -1) x+2)^5},
	\\
	a_3 &= \frac{z^2 \left((4 \theta  (\theta +1)-1) x^2+8 \theta  x+4\right)^2}{(-2 \theta  x+x-2)^4},
	\\
	a_4 &= \frac{4 y^2 z^2 ((6 \theta -1) x+2) \left((4 \theta  (\theta +1)-1) x^2+8 \theta  x+4\right)}{((2 \theta -1) x+2)^5},
	\\
	a_5 &= \frac{8 y^4 \left(z^4+2\right) (-6 \theta  x+x-2)^2}{(-2 \theta  x+x-2)^6},
	\\
	a_6 &= \frac{16 y^4 z^2 (-6 \theta  x+x-2)^2}{(-2 \theta  x+x-2)^6},
\end{align*}
\textit{where} $x$, $y$, $z$ \textit{ are defined as in \eqref{eq:50}}.

\textit{Parameters for MSSAMM method are omitted due to the space constraints.}

\begin{figure}[t]
	\centering
        \begin{subfigure}[t]{0.45\textwidth}
                \centering
                \includegraphics[width=\textwidth]{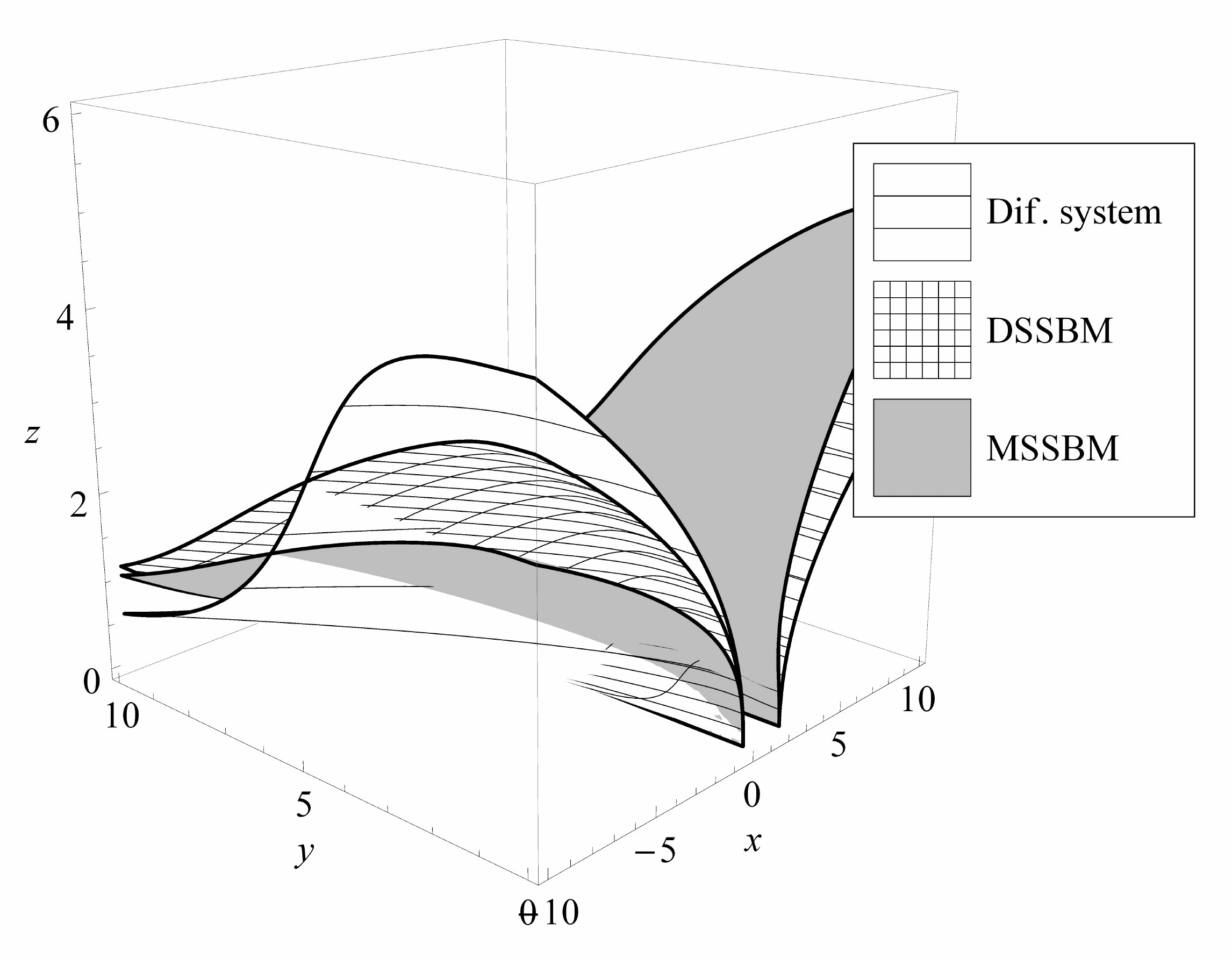}
                \caption{DSSBM and MSSBM}
        \end{subfigure}
	\qquad
		\begin{subfigure}[t]{0.45\textwidth}
                \centering
                \includegraphics[width=\textwidth]{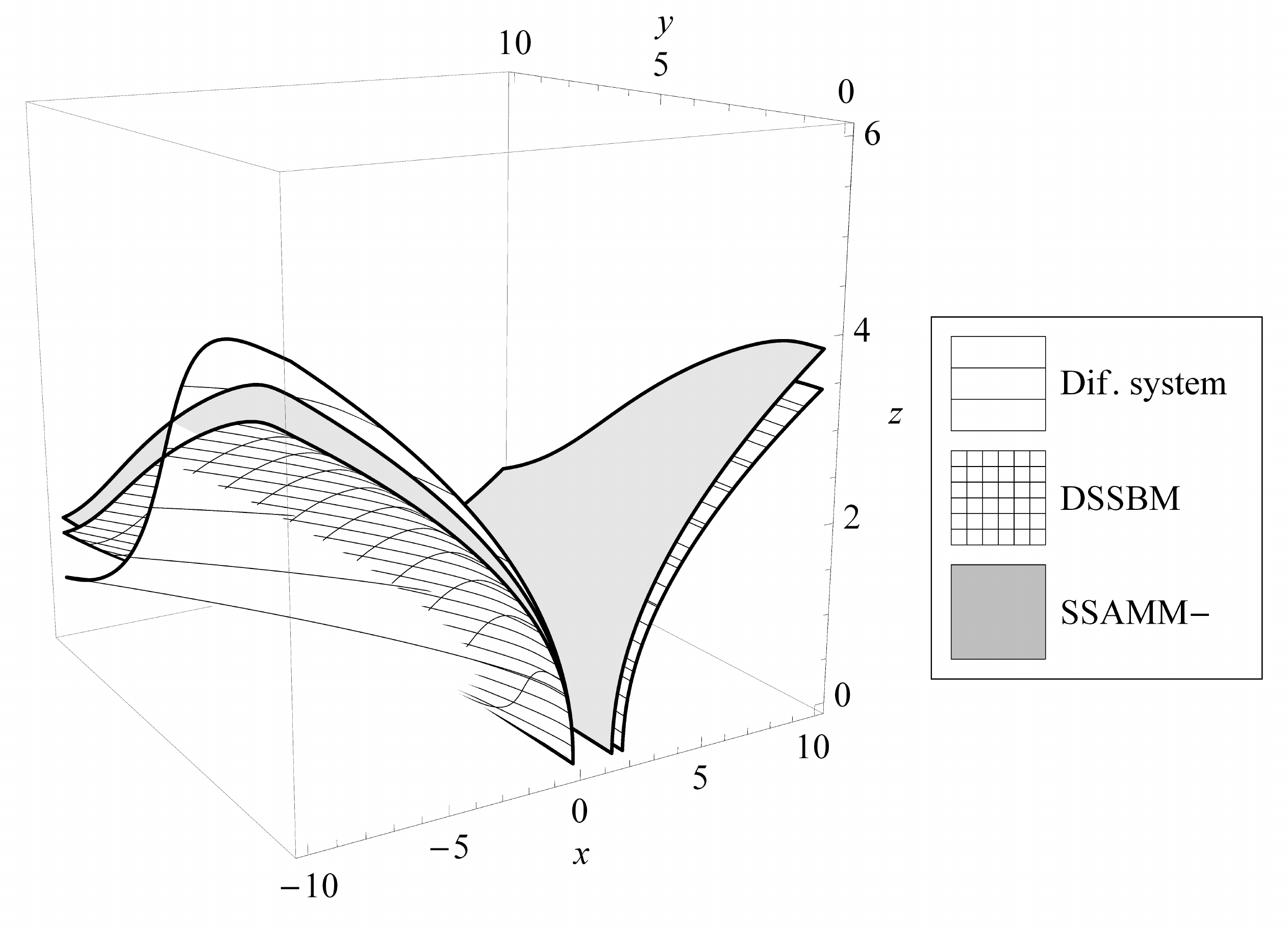}
                \caption{DSSBM and SSAMM-}
        \end{subfigure}
	\caption{Portions of the mean-square stability regions for the split-step and modified split-step methods applied to test system \eqref{eq:48}}.
	\label{fig:4}
\end{figure}

It is impossible to compute exact eigenvalues for these matrices and therefore we use numerical approximation. Stability regions computed in this way are illustrated in the Figure \ref{fig:4}. {\color{black} It is easy to see that in contrast to the system \eqref{eq:47} for the test system \eqref{eq:48} DSSBM method demonstrates better stability properties than MSSBM method. SSAMM and MSSAMM methods behave similarly and hence are not displayed. However it can be seen that SSAMM- still has the best stability properties among considered methods.

The above result is impossible to obtain using the standard stability analysis based on the scalar test equation for which the modified split-step methods always have improved stability properties. This again shows that stability of stochastic differential systems and numerical methods is highly dependent on the structures of the drift and diffusion components and the way they interact with each other.

It is easy to obtain stability conditions for the third test equation \eqref{eq:48a} using Theorem 4.2 and deterministic stability matrices \eqref{eq:43}-\eqref{eq:46} and therefore this trivial step is omitted. Stability regions for this equation are illustrated on Figure \ref{fig:5}. Figure 5a shows the influence of the number of noise sources on the stability of the SDE. By comparing stability regions of DSSBM and MSSBM methods from Figure \ref{fig:5}b one can see that for the increasing number of noise channels modified methods lose their advantages over the regular methods. Other methods behave similarly. One can also see that for systems with multidimensional noise stability of all the proposed methods deteriorates significantly. However Figures \ref{fig:5}c and \ref{fig:5}d show that for certain parameter values SSAMM+ and MSSAMM+ methods demonstrate the best stability properties.

}

\begin{figure}
	\centering
        \begin{subfigure}[t]{0.45\textwidth}
                \centering
                \includegraphics[width=\textwidth]{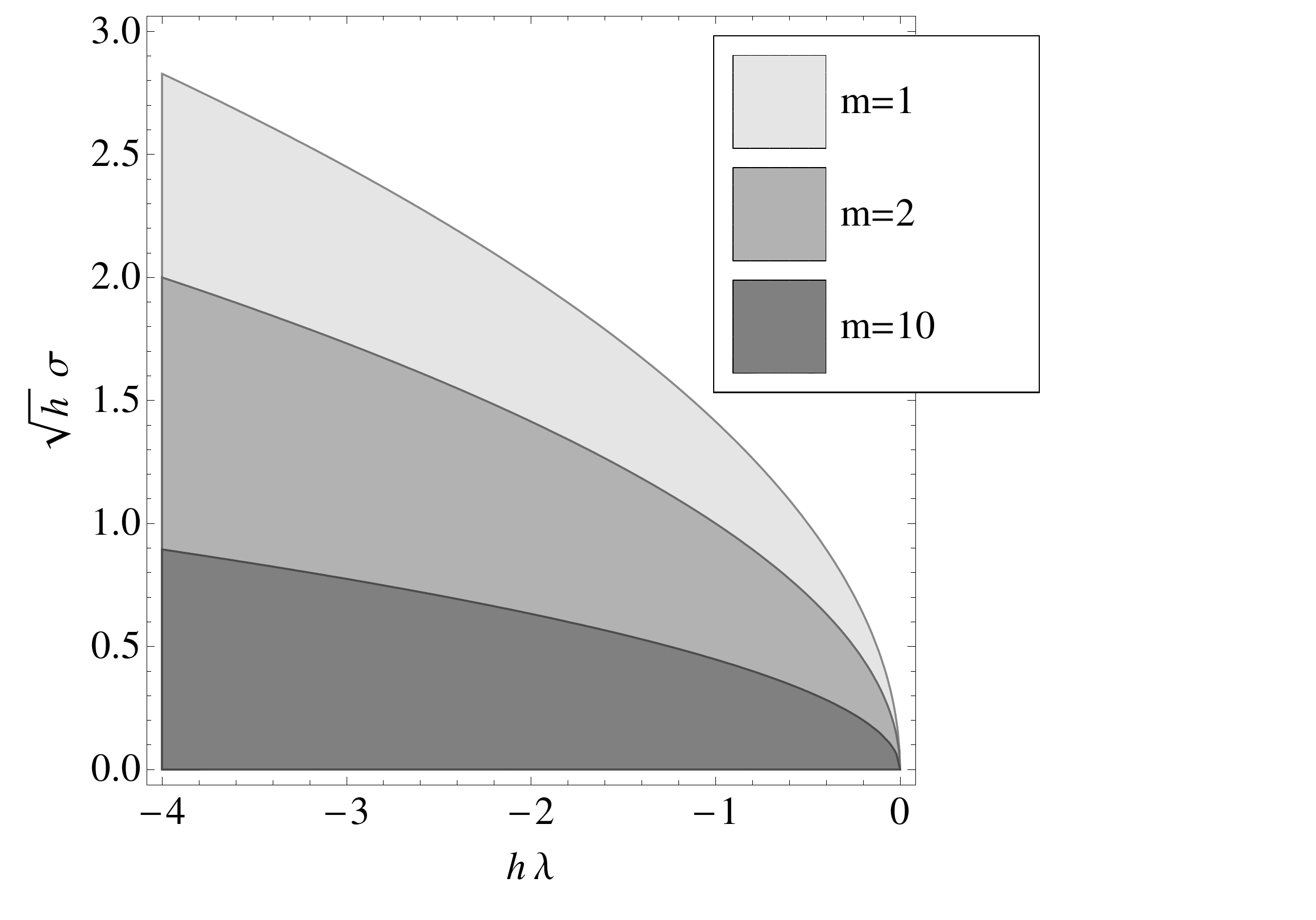}
                \caption{Stability regions of the differential system for different numbers of noise channels}
        \end{subfigure}
	\qquad
		\begin{subfigure}[t]{0.45\textwidth}
                \centering
                \includegraphics[width=\textwidth]{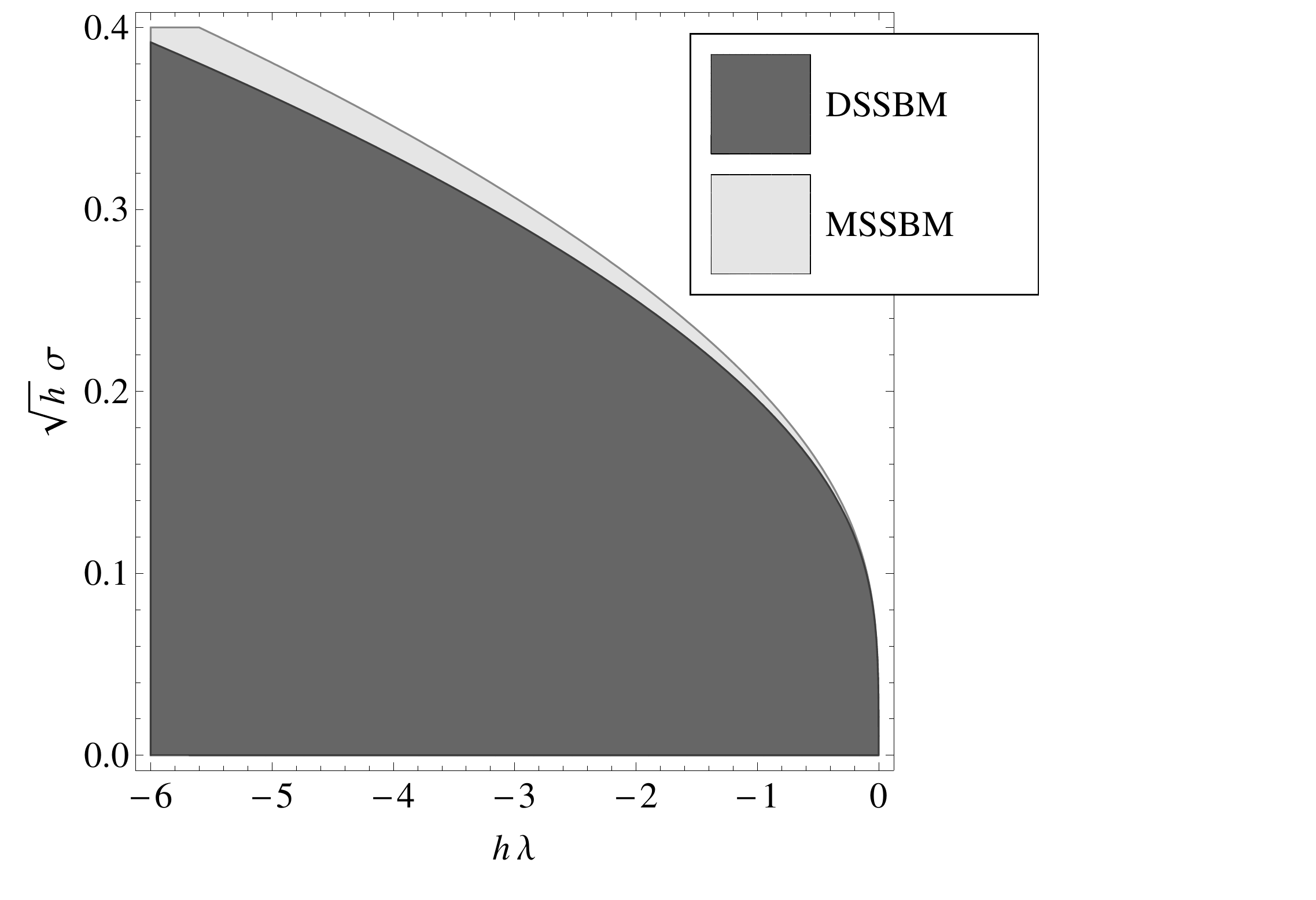}
                \caption{Stability regions of DSSBM and MSSBM methods for the system with 10 noise channels}
        \end{subfigure}
	\centering
        \begin{subfigure}[t]{0.45\textwidth}
                \centering
                \includegraphics[width=\textwidth]{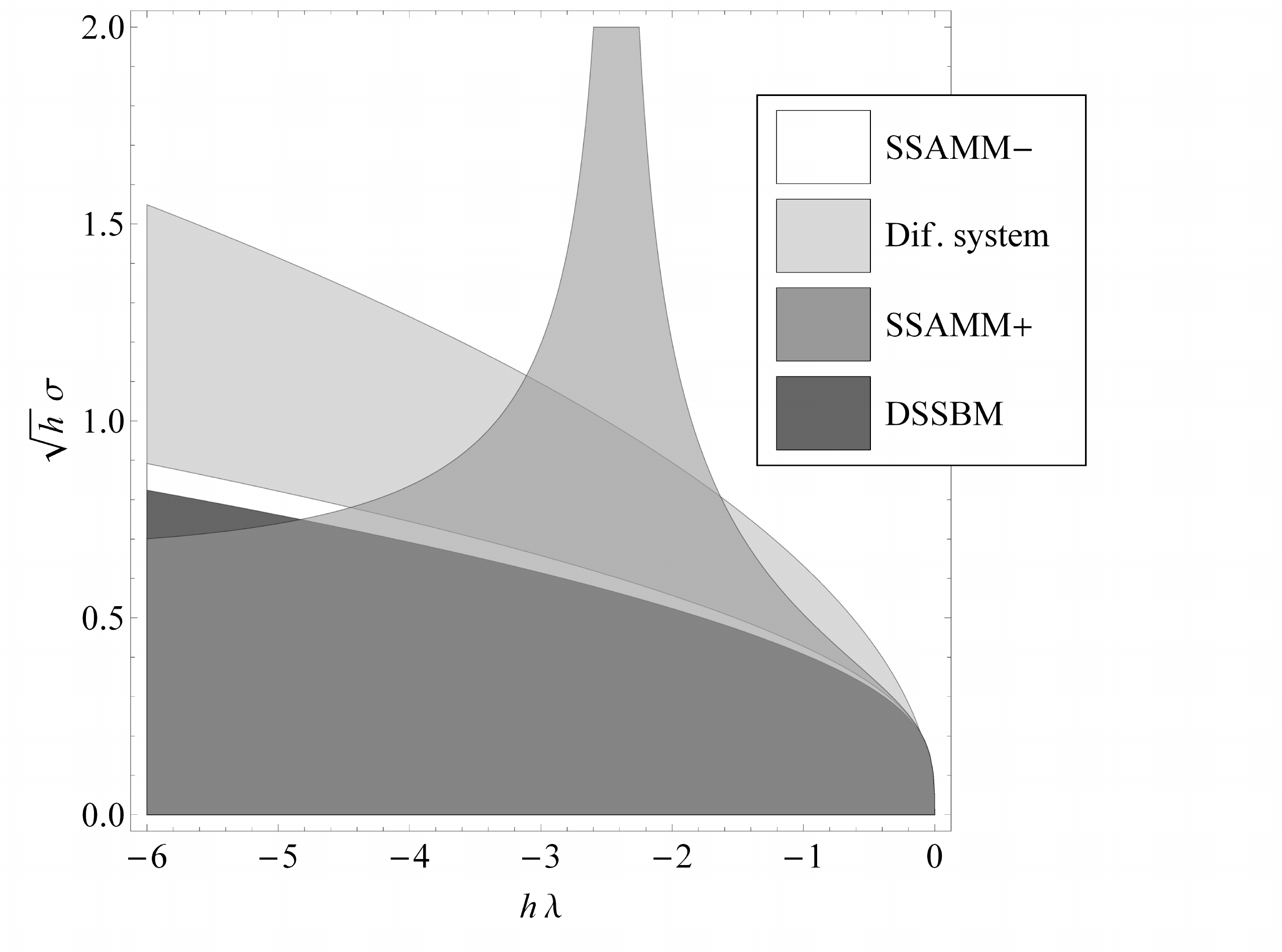}
                \caption{Regular methods}
        \end{subfigure}
	\qquad
		\begin{subfigure}[t]{0.45\textwidth}
                \centering
                \includegraphics[width=\textwidth]{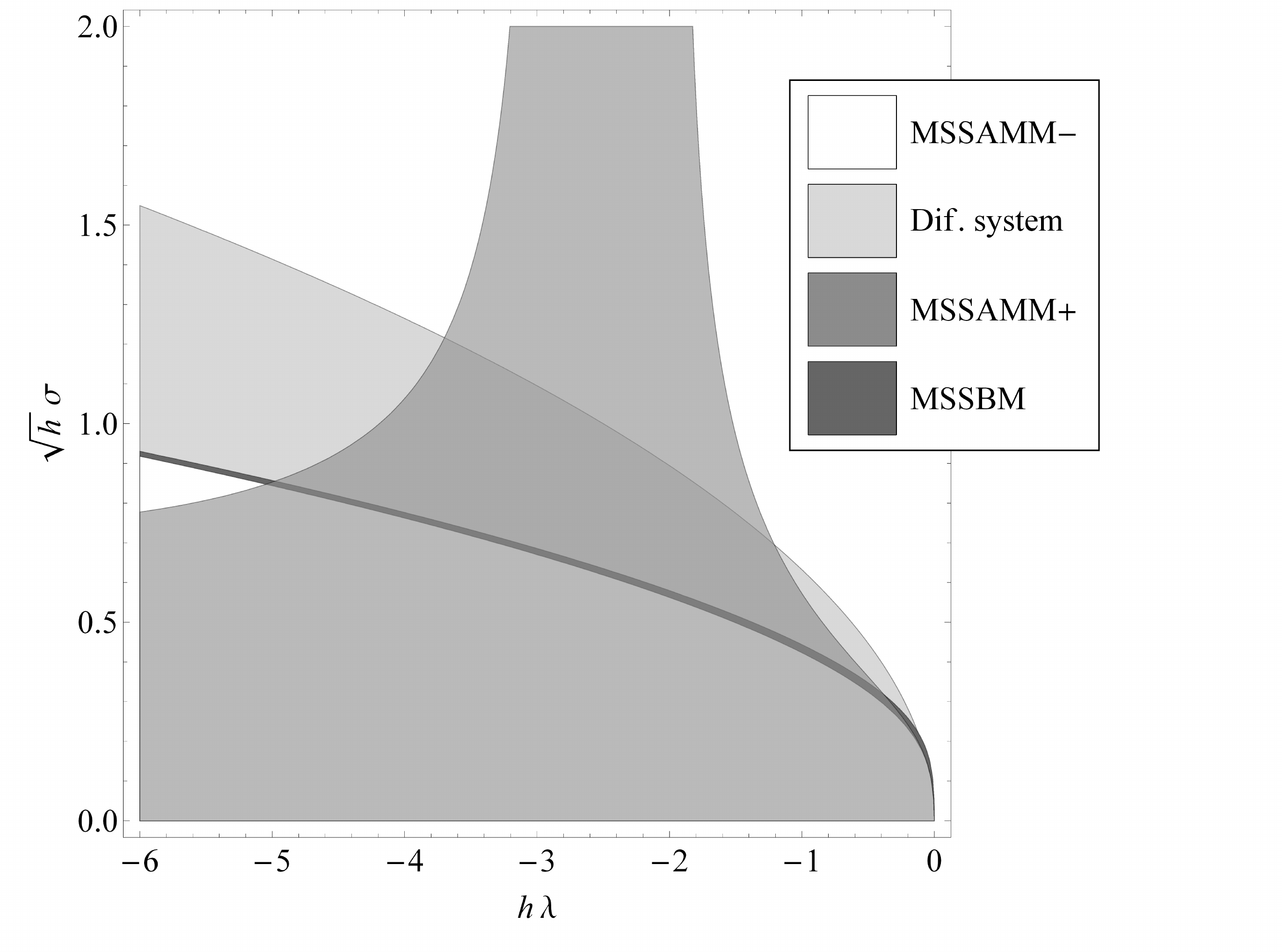}
                \caption{Modified methods}
        \end{subfigure}
	\caption{Mean-square stability regions for the test system \eqref{eq:48a}}.
	\label{fig:5}
\end{figure}

\section{Numerical results}
In this section we give four numerical examples \newblue{ to confirm theoretical results from previous sections and} illustrate effectiveness of the proposed methods in solving stiff stochastic multi-channel systems.

\textbf{Example 1.} To \newblue{confirm} the strong order of convergence of the proposed methods we consider the following multi-dimensional linear stochastic system \cite{Roesler2010}

\begin{align}\label{eq:59}
	dX = A X dt + \sum_{k=1}^m B^k X dW^k, \qquad X_0 = (1,...,1)^T \in \mathbb{R}^d
\end{align}
where $A_{i,j}=\frac{1}{20}$ if $i \neq j$, $A_{ii}=-\frac{3}{2}$ and $B_{i,j}^k=\frac{1}{100}$ if $i \neq j$, $B_{ii}^k=\frac{1}{5}$.

Exact solution of the above system is known and given by the expression
$$
	X = X_0 \exp \left( \left( A-\frac{1}{2} \sum_{k=1}^m \left(B^k\right)^2 \right)t + \sum_{k=1}^m B^k W^k \right).
$$

We computed the numerical solution for the case $d=m=5$ for time steps $h=2^{-1}...2^{-10}$. Results in the Table \ref{tab:1} and in the Figure \ref{fig:6} show the strong order of convergence 1 for all methods which is in accordance to theoretical results. All schemes have similar level of accuracy.

\begin{figure}[h]
	\centering
                \includegraphics[width=0.5\textwidth]{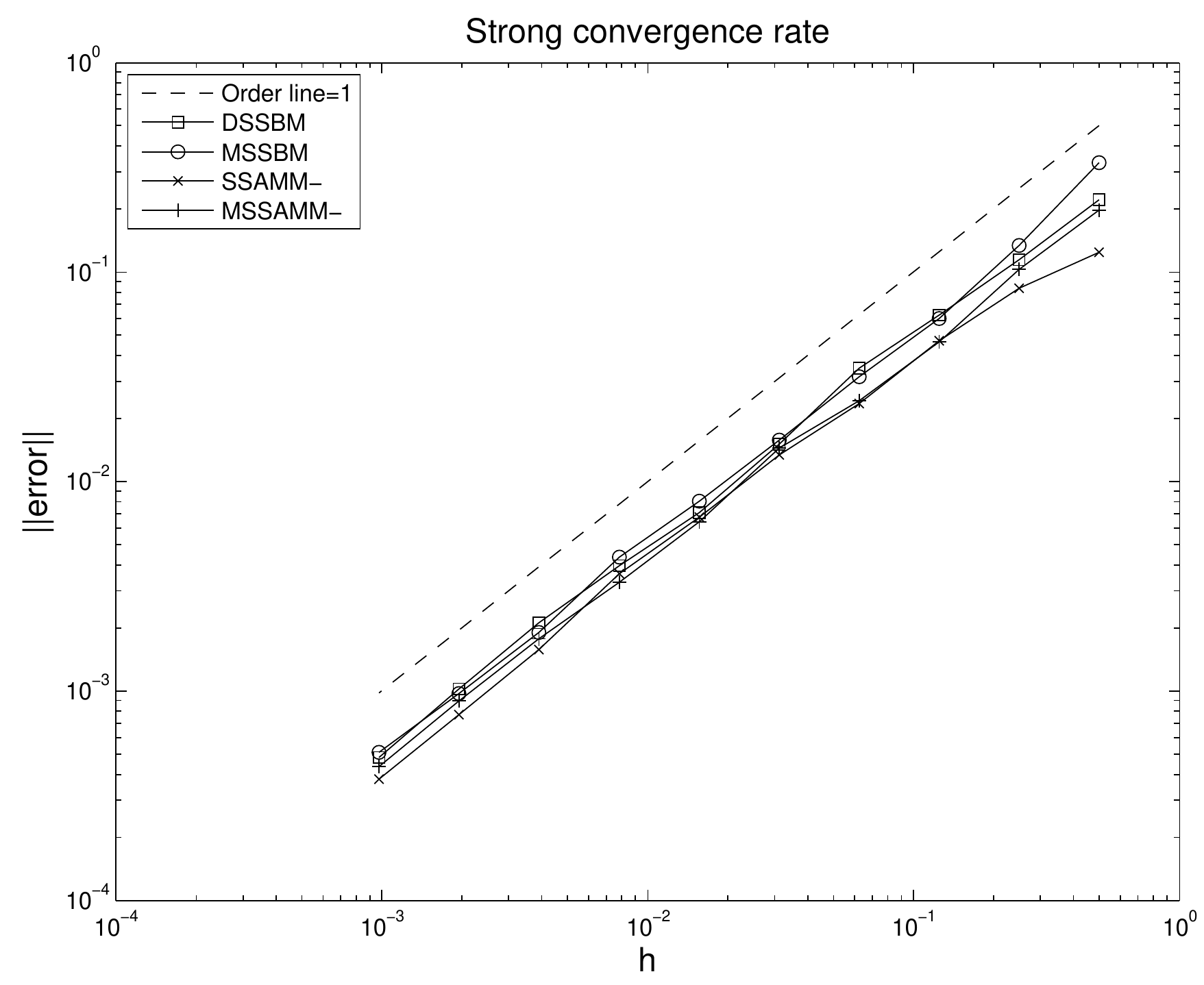}
                \caption{Strong errors for the equation \eqref{eq:59}}
	\label{fig:6}
\end{figure}

\begin{table}[h]
	\begin{center}
	\begin{tabular}{  c  c  c  c  c  c  }
\hline
\hline
   h &      DSSBM      &   MSSBM      &  SSAMM-     &   MSSAMM-   \\
\hline
$2^{- 1}$&$   2.207\times 10^{-1}$  &$   3.335\times 10^{-1}$  &$   1.243\times 10^{-1}$  &$   1.977\times 10^{-1}$  \\
$2^{- 2}$&$   1.148\times 10^{-1}$  &$   1.340\times 10^{-1}$  &$   8.368\times 10^{-2}$  &$   1.029\times 10^{-1}$  \\
$2^{- 3}$&$   6.241\times 10^{-2}$  &$   5.988\times 10^{-2}$  &$   4.683\times 10^{-2}$  &$   4.641\times 10^{-2}$  \\
$2^{- 4}$&$   3.480\times 10^{-2}$  &$   3.159\times 10^{-2}$  &$   2.361\times 10^{-2}$  &$   2.425\times 10^{-2}$  \\
$2^{- 5}$&$   1.511\times 10^{-2}$  &$   1.572\times 10^{-2}$  &$   1.339\times 10^{-2}$  &$   1.439\times 10^{-2}$  \\
$2^{- 6}$&$   7.090\times 10^{-3}$  &$   8.037\times 10^{-3}$  &$   6.760\times 10^{-3}$  &$   6.411\times 10^{-3}$  \\
$2^{- 7}$&$   3.960\times 10^{-3}$  &$   4.354\times 10^{-3}$  &$   3.637\times 10^{-3}$  &$   3.294\times 10^{-3}$  \\
$2^{- 8}$&$   2.112\times 10^{-3}$  &$   1.901\times 10^{-3}$  &$   1.575\times 10^{-3}$  &$   1.774\times 10^{-3}$  \\
%$2^{- 9}$&$   1.022\times 10^{-3}$  &$   9.744\times 10^{-4}$  &$   7.700\times 10^{-4}$  &$   8.937\times 10^{-4}$  \\
%$2^{-10}$&$   4.818\times 10^{-4}$  &$   5.104\times 10^{-4}$  &$   3.800\times 10^{-4}$  &$   4.351\times 10^{-4}$  \\
\hline
\hline
	\end{tabular}
	\end{center}
	\caption{Strong errors for the equation \eqref{eq:59}}
	\label{tab:1}
\end{table}

{\color{black} To study the impact on the stability of the numerical methods arising from the dimension of the noise, we numerically computed spectral radius of the stability matrices for different number of noise channels. We note that the differential system is stable for the considered parameter values. Results from Tables \ref{tab:2}-\ref{tab:4} clearly illustrate that even for simple linear systems with commutative noise the stability condition can become very restrictive for the increasing number of noise channels. We note that the best result was obtained by the SSAMM+ and MSSAMM+ schemes which is in accordance to the stability analysis performed in the previous section. 
}

\begin{table}
	\begin{center}
	\begin{tabular}{  c  c  c  c  c  c  c  c }
\hline
\hline
   h &        DSSBM      &    SSAMM+     &   SSAMM-  &  MSSBM   &  MSSAMM+     &   MSSAMM-   \\
\hline
1.000 & 1.10  unst. & 0.24  \textbf{stab.}  & 4.48  unst. & 0.97   \textbf{stab.}  & 0.19   \textbf{stab.}  & 4.18  unst.  \\ 
  0.900 & 1.05  unst. & 0.04  \textbf{stab.}  & 3.77  unst. & 0.94   \textbf{stab.}  & 0.04   \textbf{stab.}  & 3.55  unst.  \\ 
  0.800 & 1.00  unst. & 0.07  \textbf{stab.}  & 3.14  unst. & 0.91   \textbf{stab.}  & 0.07   \textbf{stab.}  & 2.99  unst.  \\ 
  0.700 & 0.95  \textbf{stab.}  & 0.13  \textbf{stab.}  & 2.58  unst. & 0.88   \textbf{stab.}  & 0.12   \textbf{stab.}  & 2.49  unst.  \\ 
  0.600 & 0.90  \textbf{stab.}  & 0.19  \textbf{stab.}  & 2.11  unst. & 0.85   \textbf{stab.}  & 0.18   \textbf{stab.}  & 2.05  unst.  \\ 
  0.500 & 0.86  \textbf{stab.}  & 0.31  \textbf{stab.}  & 1.71  unst. & 0.82   \textbf{stab.}  & 0.28   \textbf{stab.}  & 1.69  unst.  \\ 
  0.400 & 0.82  \textbf{stab.}  & 0.45  \textbf{stab.}  & 1.39  unst. & 0.80   \textbf{stab.}  & 0.42   \textbf{stab.}  & 1.38  unst.  \\ 
  0.300 & 0.80  \textbf{stab.}  & 0.58  \textbf{stab.}  & 1.14  unst. & 0.79   \textbf{stab.}  & 0.56   \textbf{stab.}  & 1.15  unst.  \\ 
  0.200 & 0.80  \textbf{stab.}  & 0.70  \textbf{stab.}  & 0.98  \textbf{stab.}  & 0.80   \textbf{stab.}  & 0.69   \textbf{stab.}  & 0.99   \textbf{stab.}   \\ 
  0.100 & 0.86  \textbf{stab.}  & 0.83  \textbf{stab.}  & 0.91  \textbf{stab.}  & 0.86   \textbf{stab.}  & 0.83   \textbf{stab.}  & 0.92   \textbf{stab.}   \\ 
\hline
\hline
	\end{tabular}
	\end{center}
	\caption{Values of the spectral radius of the stability matrices for DSSBM, MSSBM, SSAMM and MSSAMM methods for the system \eqref{eq:59} with $d=5$, $m=9$.}
	\label{tab:2}
\end{table}

\begin{table}
	\begin{center}
	\begin{tabular}{  c  c  c  c  c  c  c  c }
\hline
\hline
   h &      DSSBM      &    SSAMM+     &   SSAMM-  &  MSSBM   &  MSSAMM+     &   MSSAMM-   \\
\hline
 1.000 & 2.21  unst. & 0.48  \textbf{stab.}  & 8.98  unst. & 1.80  unst. & 0.34   \textbf{stab.}  & 7.85  unst.  \\ 
  0.900 & 2.06  unst. & 0.09  \textbf{stab.}  & 7.38  unst. & 1.71  unst. & 0.06   \textbf{stab.}  & 6.52  unst.  \\ 
  0.800 & 1.91  unst. & 0.12  \textbf{stab.}  & 5.96  unst. & 1.61  unst. & 0.10   \textbf{stab.}  & 5.33  unst.  \\ 
  0.700 & 1.75  unst. & 0.20  \textbf{stab.}  & 4.73  unst. & 1.50  unst. & 0.17   \textbf{stab.}  & 4.29  unst.  \\ 
  0.600 & 1.58  unst. & 0.30  \textbf{stab.}  & 3.68  unst. & 1.39  unst. & 0.26   \textbf{stab.}  & 3.39  unst.  \\ 
  0.500 & 1.41  unst. & 0.51  \textbf{stab.}  & 2.80  unst. & 1.27  unst. & 0.43   \textbf{stab.}  & 2.62  unst.  \\ 
  0.400 & 1.24  unst. & 0.68  \textbf{stab.}  & 2.09  unst. & 1.15  unst. & 0.60   \textbf{stab.}  & 2.00  unst.  \\ 
  0.300 & 1.08  unst. & 0.79  \textbf{stab.}  & 1.55  unst. & 1.04  unst. & 0.73   \textbf{stab.}  & 1.51  unst.  \\ 
  0.200 & 0.96  \textbf{stab.}  & 0.84  \textbf{stab.}  & 1.17  unst. & 0.95   \textbf{stab.}  & 0.81   \textbf{stab.}  & 1.17  unst.  \\ 
  0.100 & 0.91  \textbf{stab.}  & 0.88  \textbf{stab.}  & 0.97  \textbf{stab.}  & 0.91   \textbf{stab.}  & 0.88   \textbf{stab.}  & 0.97   \textbf{stab.}   \\ 
\hline
\hline
	\end{tabular}
	\end{center}
	\caption{Values of the spectral radius of the stability matrices for DSSBM, MSSBM, SSAMM and MSSAMM methods for the system \eqref{eq:59} with $d=5$, $m=11$.}
	\label{tab:3}
\end{table}

\begin{table}
	\begin{center}
	\begin{tabular}{  c  c  c  c  c  c  c  c }
\hline
\hline
   h &         DSSBM      &    SSAMM+     &   SSAMM-  &  MSSBM   &  MSSAMM+     &   MSSAMM-   \\
\hline
  1.000 & 4.15  unst. & 0.90  \textbf{stab.}  & 16.86  unst. & 3.18  unst. & 0.58   \textbf{stab.}  & 14.00  unst.  \\ 
  0.900 & 3.83  unst. & 0.16  \textbf{stab.}  & 13.70  unst. & 2.98  unst. & 0.11   \textbf{stab.}  & 11.49  unst.  \\ 
  0.800 & 3.49  unst. & 0.20  \textbf{stab.}  & 10.91  unst. & 2.77  unst. & 0.16   \textbf{stab.}  & 9.26  unst.  \\ 
  0.700 & 3.13  unst. & 0.32  \textbf{stab.}  & 8.49  unst. & 2.53  unst. & 0.27   \textbf{stab.}  & 7.30  unst.  \\ 
  0.600 & 2.75  unst. & 0.53  \textbf{stab.}  & 6.42  unst. & 2.28  unst. & 0.40   \textbf{stab.}  & 5.62  unst.  \\ 
  0.500 & 2.36  unst. & 0.86  \textbf{stab.}  & 4.70  unst. & 2.02  unst. & 0.68   \textbf{stab.}  & 4.20  unst.  \\ 
  0.400 & 1.96  unst. & 1.08  unst. & 3.31  unst. & 1.73  unst. & 0.90   \textbf{stab.}  & 3.03  unst.  \\ 
  0.300 & 1.58  unst. & 1.15  unst. & 2.25  unst. & 1.45  unst. & 1.02  unst. & 2.12  unst.  \\ 
  0.200 & 1.23  unst. & 1.08  unst. & 1.50  unst. & 1.18  unst. & 1.01  unst. & 1.46  unst.  \\ 
  0.100 & 1.00  unst. & 0.97  \textbf{stab.}  & 1.06  unst. & 0.99   \textbf{stab.}  & 0.95   \textbf{stab.}  & 1.06  unst.  \\ 
\hline
\hline
	\end{tabular}
	\end{center}
	\caption{Values of the spectral radius of the stability matrices for DSSBM, MSSBM, SSAMM and MSSAMM methods for the system \eqref{eq:59} with $d=5$, $m=13$.}
	\label{tab:4}
\end{table}

\vspace{5mm}
\newblue{
\textbf{Example 2. Stiff problem.}
Second example is a two-dimensional stiff stochastic system \cite{Milstein1998}

\begin{align}\label{eq:ex3a_1}
	d
	X
	&=
	\beta
	\begin{pmatrix}
		0  & 1   \\[0.5em]
		-1 & 0
	\end{pmatrix}
	X
	+
	\frac{\sigma}{2}
	\begin{pmatrix}
		1  & 1   \\[0.5em]
		1 & 1
	\end{pmatrix}
	X
	dW_1
	+
	\frac{\rho}{2}
	\begin{pmatrix}
		1  & -1   \\[0.5em]
		-1 & 1
	\end{pmatrix}
	X
	dW_2.
\end{align}

This system was simulated with  $\beta = 5$, $\sigma=4$ and $\rho=0.5$ on the interval $T=[0,20]$ with initial position at $(X_1(0), X_2(0)) = (1, 0)$. As it can be seen from Figure \ref{fig:7}, all methods have similar dynamics. Their trajectories stay close to the origin which replicates the exact solution. However the methods respond differently to the transients which appear in the long time simulation. These transients are due to the stiffness of the stochastic system in \eqref{eq:ex3a_1}. We see that DSSBM and SSAMM methods generate stable solutions while the Milstein method blows up in the vicinity of the transient which is an undesirable behavior for asymptotically stable system.

It is also interesting to compare the behavior of the SSAMM- and the SSCTM ($\theta=0.5$) methods since their deterministic parts are geometric integrators of the corresponding rotating deterministic system. Figures \ref{fig:7} e-h show that for the system in \eqref{eq:ex3a_1} SSAMM- method is more stable than SSCTM ($\theta=0.5$) method.

}

\begin{figure}[p]
	\centering
        \begin{subfigure}[t]{0.45\textwidth}
                \centering
                \includegraphics[width=\textwidth]{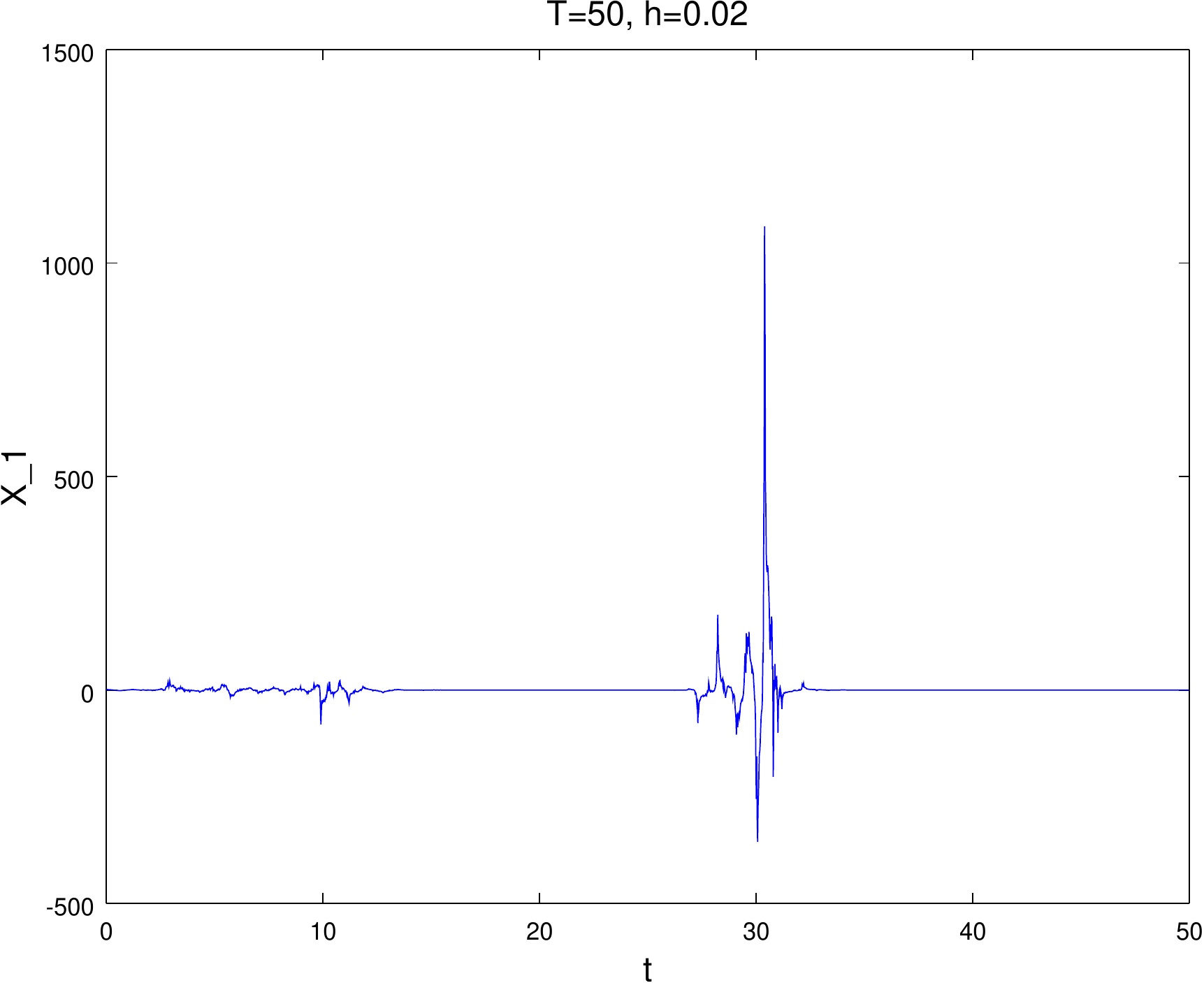}
                \caption{Milstein method}
        \end{subfigure}
	\qquad
		\begin{subfigure}[t]{0.45\textwidth}
                \centering
                \includegraphics[width=\textwidth]{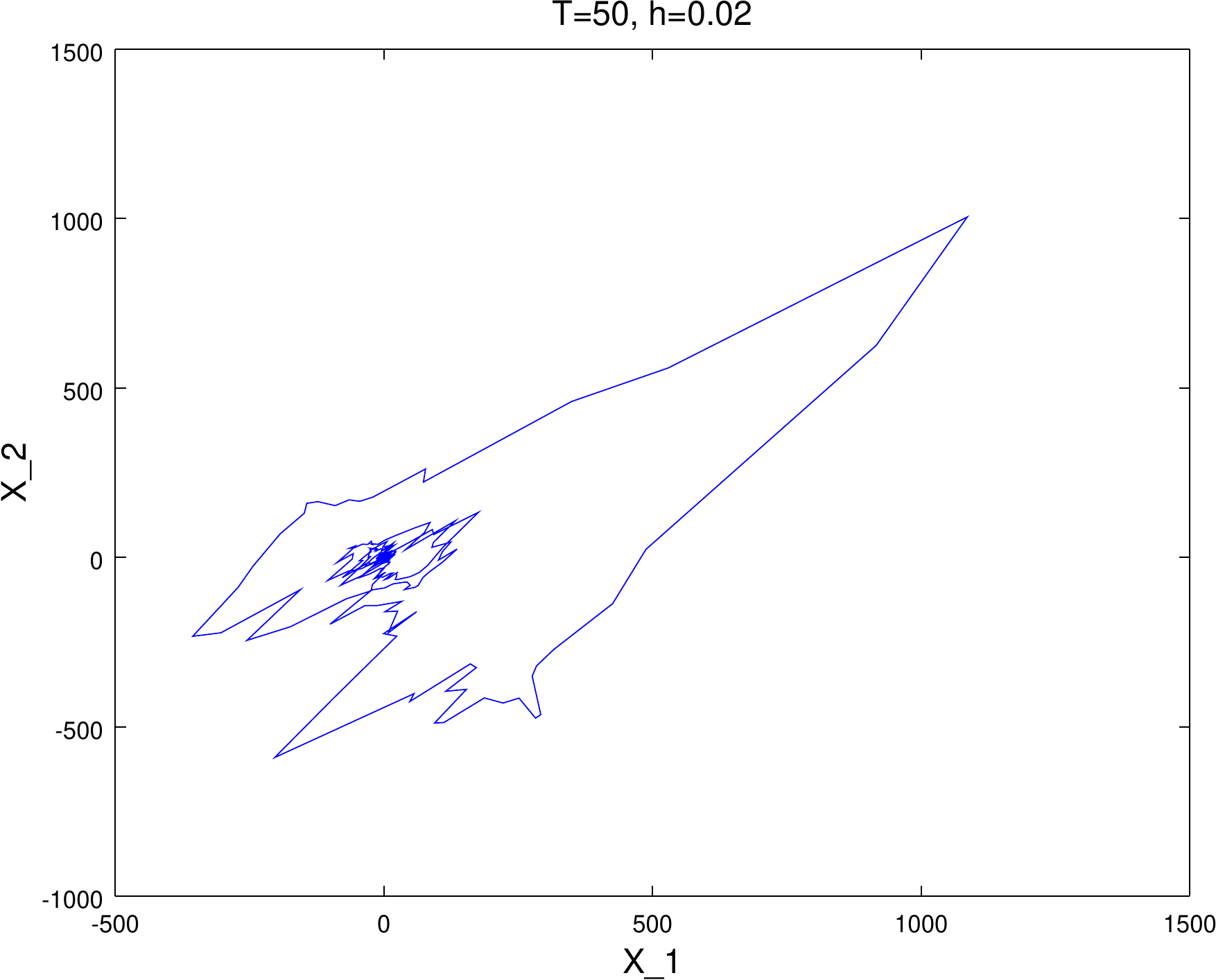}
                \caption{Milstein method}
        \end{subfigure}
        \begin{subfigure}[t]{0.45\textwidth}
                \centering
                \includegraphics[width=\textwidth]{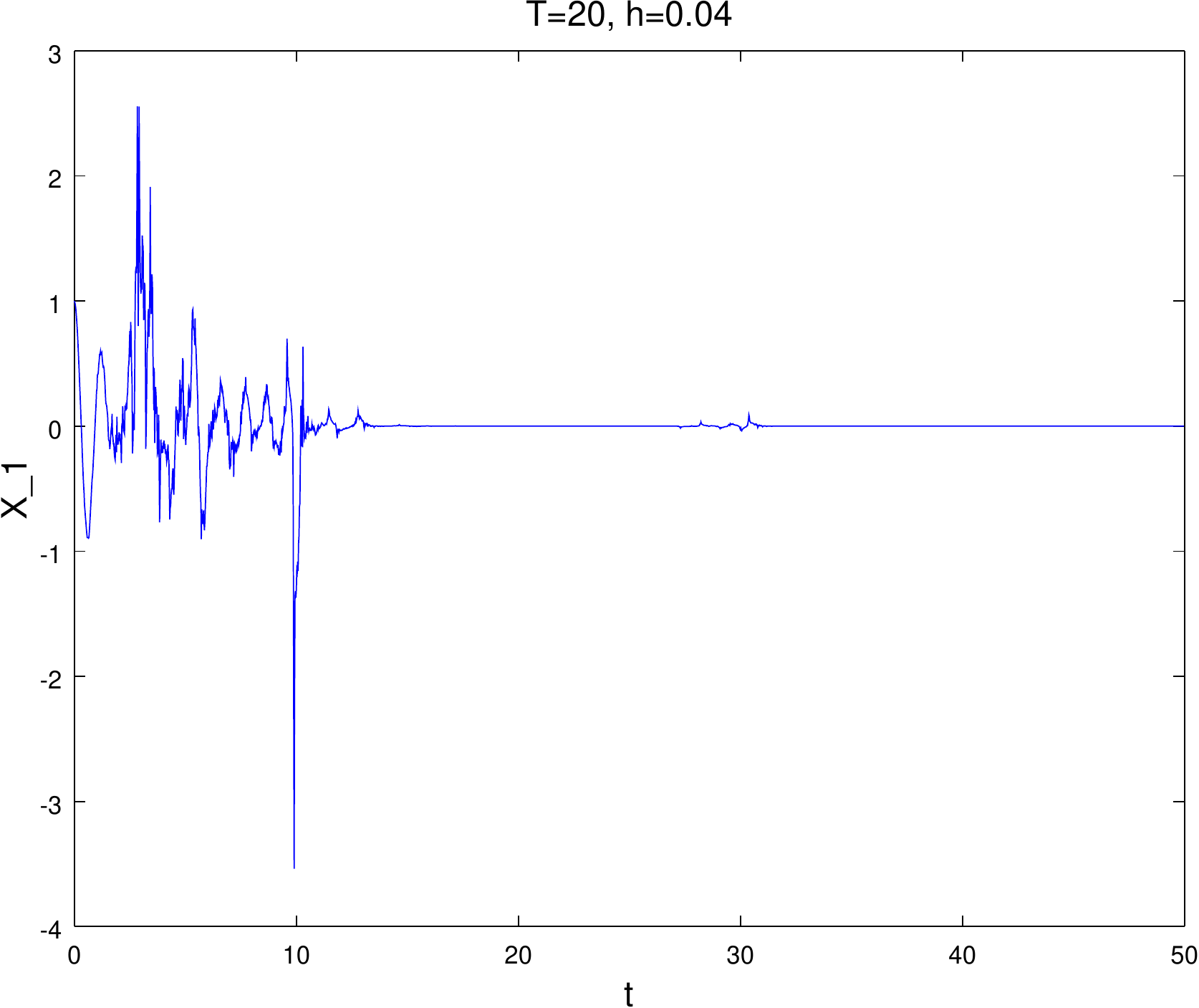}
                \caption{ DSSBM method}
        \end{subfigure}
	\qquad
		\begin{subfigure}[t]{0.45\textwidth}
                \centering
                \includegraphics[width=\textwidth]{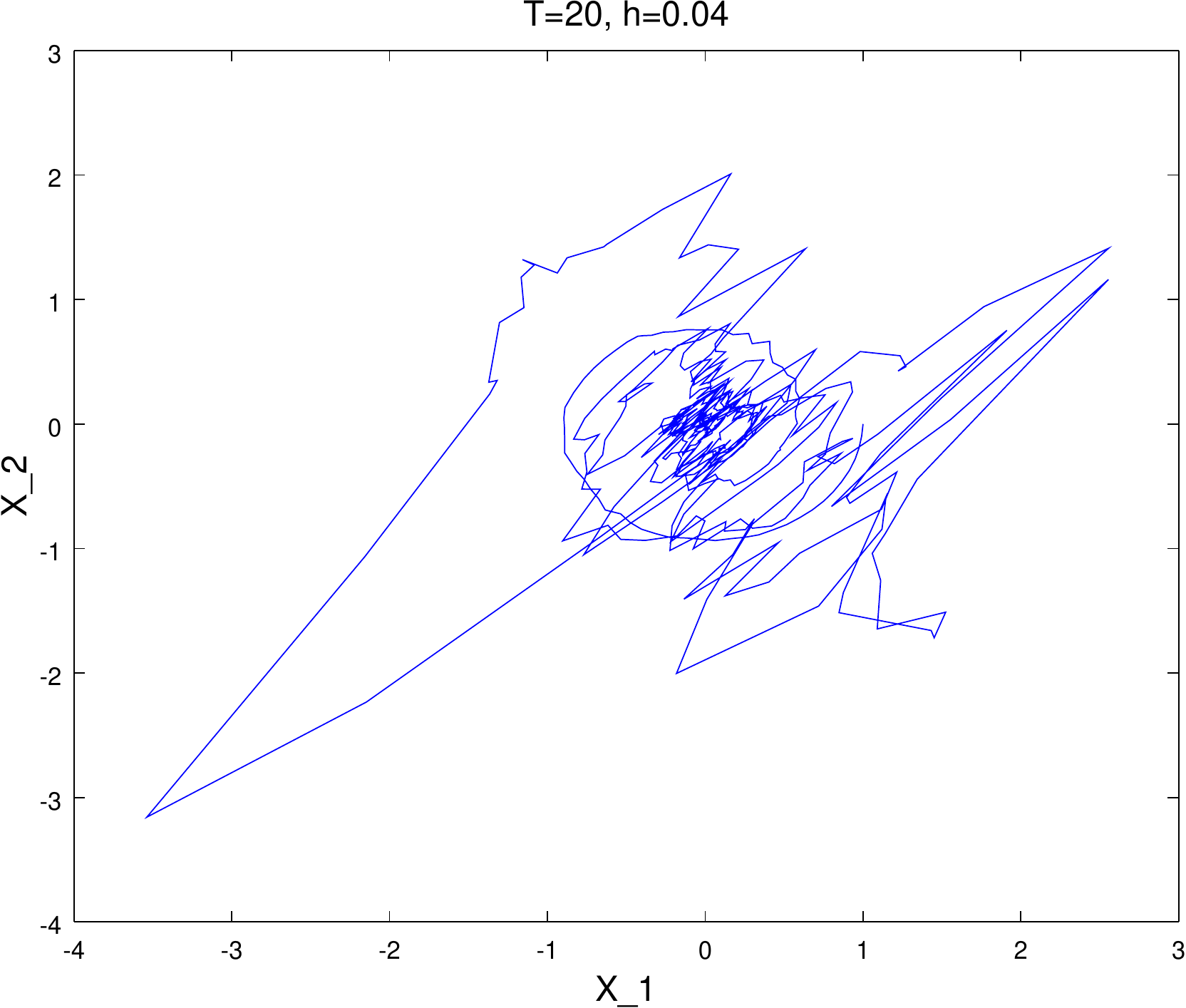}
                \caption{ DSSBM method}
        \end{subfigure}
        \begin{subfigure}[t]{0.45\textwidth}
                \centering
                \includegraphics[width=\textwidth]{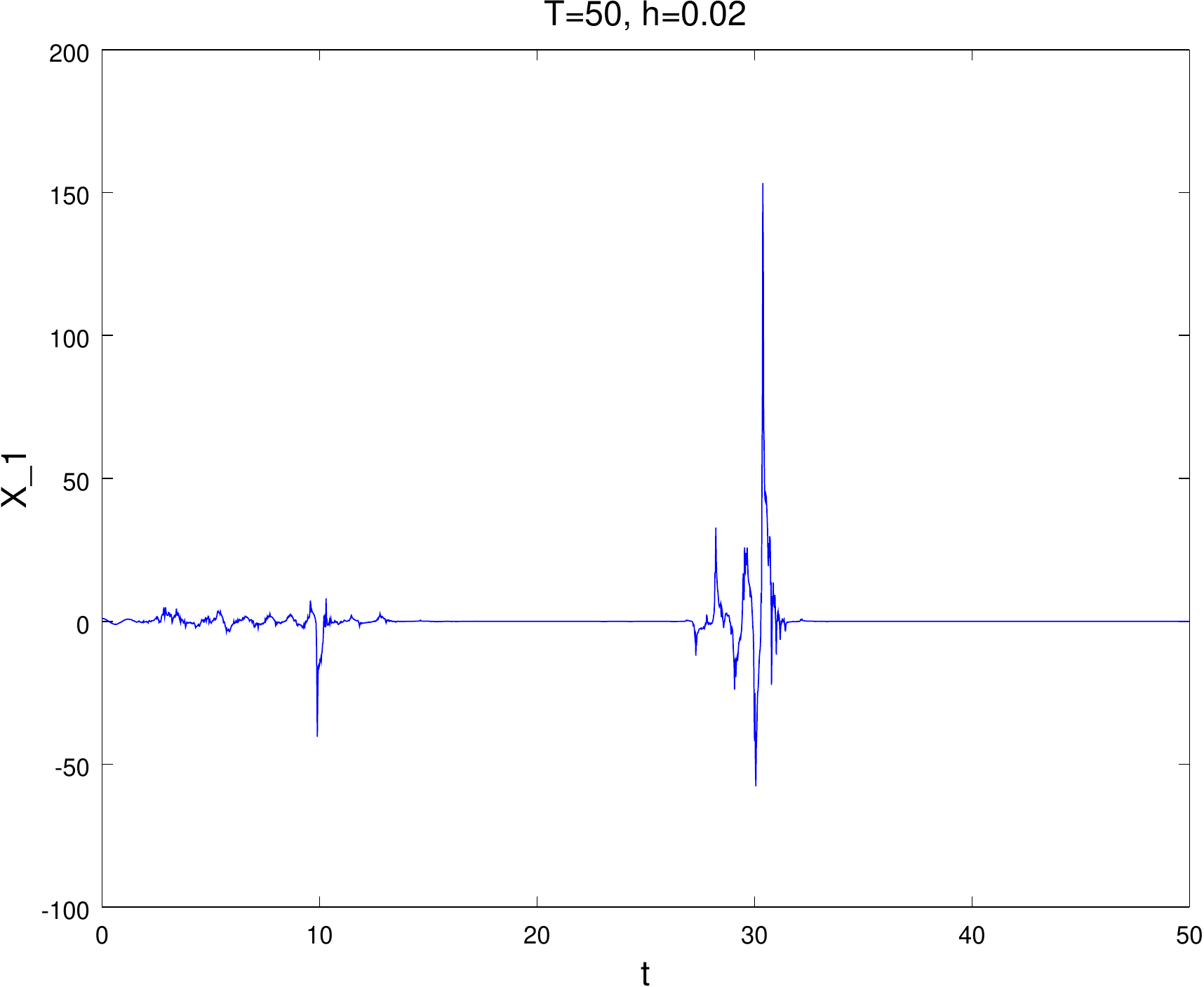}
                \caption{SSCTM method, $\theta=0.5$}
        \end{subfigure}
	\qquad
		\begin{subfigure}[t]{0.45\textwidth}
                \centering
                \includegraphics[width=\textwidth]{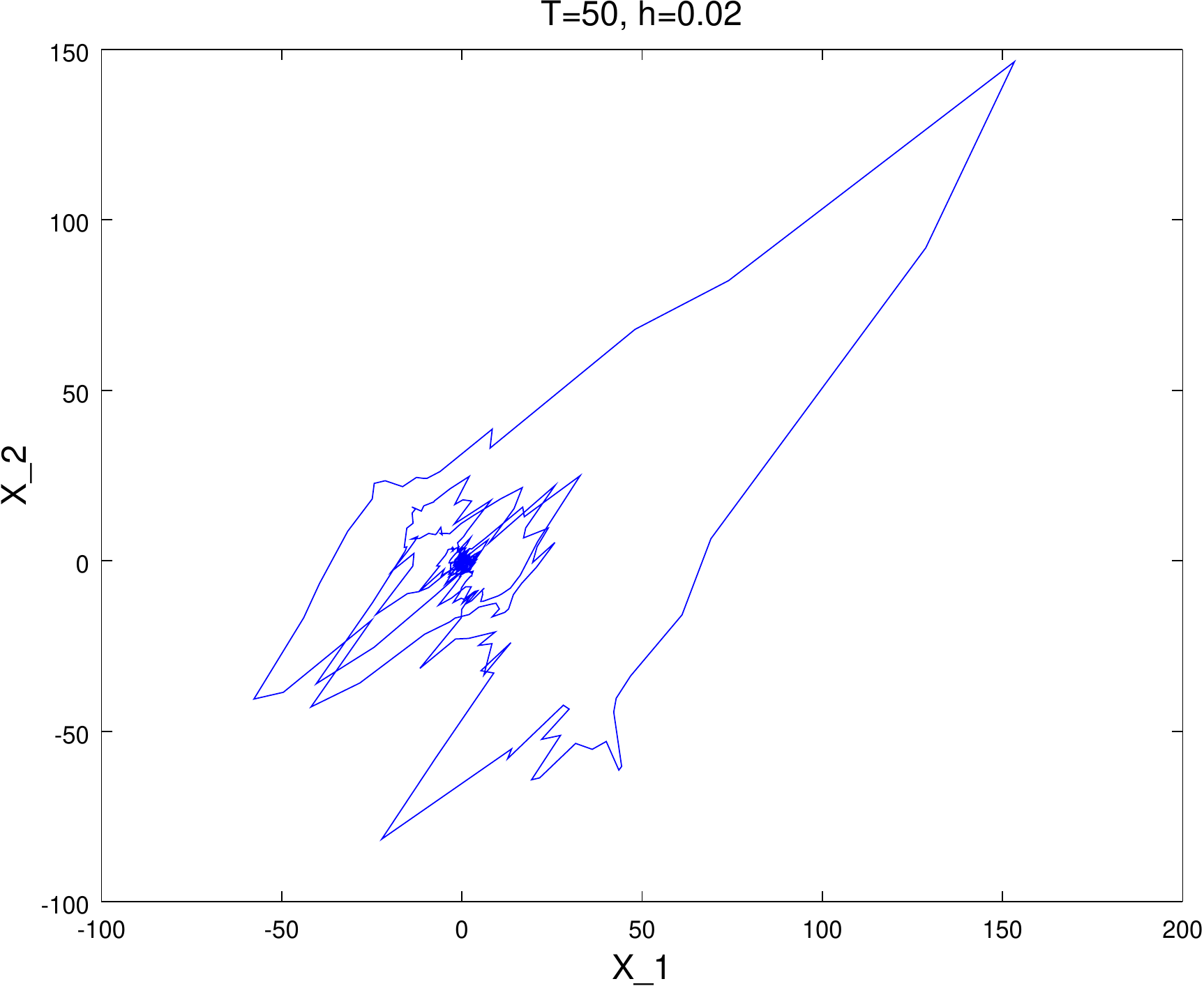}
                \caption{SSCTM method, $\theta=0.5$}
        \end{subfigure}
        \begin{subfigure}[t]{0.45\textwidth}
                \centering
                \includegraphics[width=\textwidth]{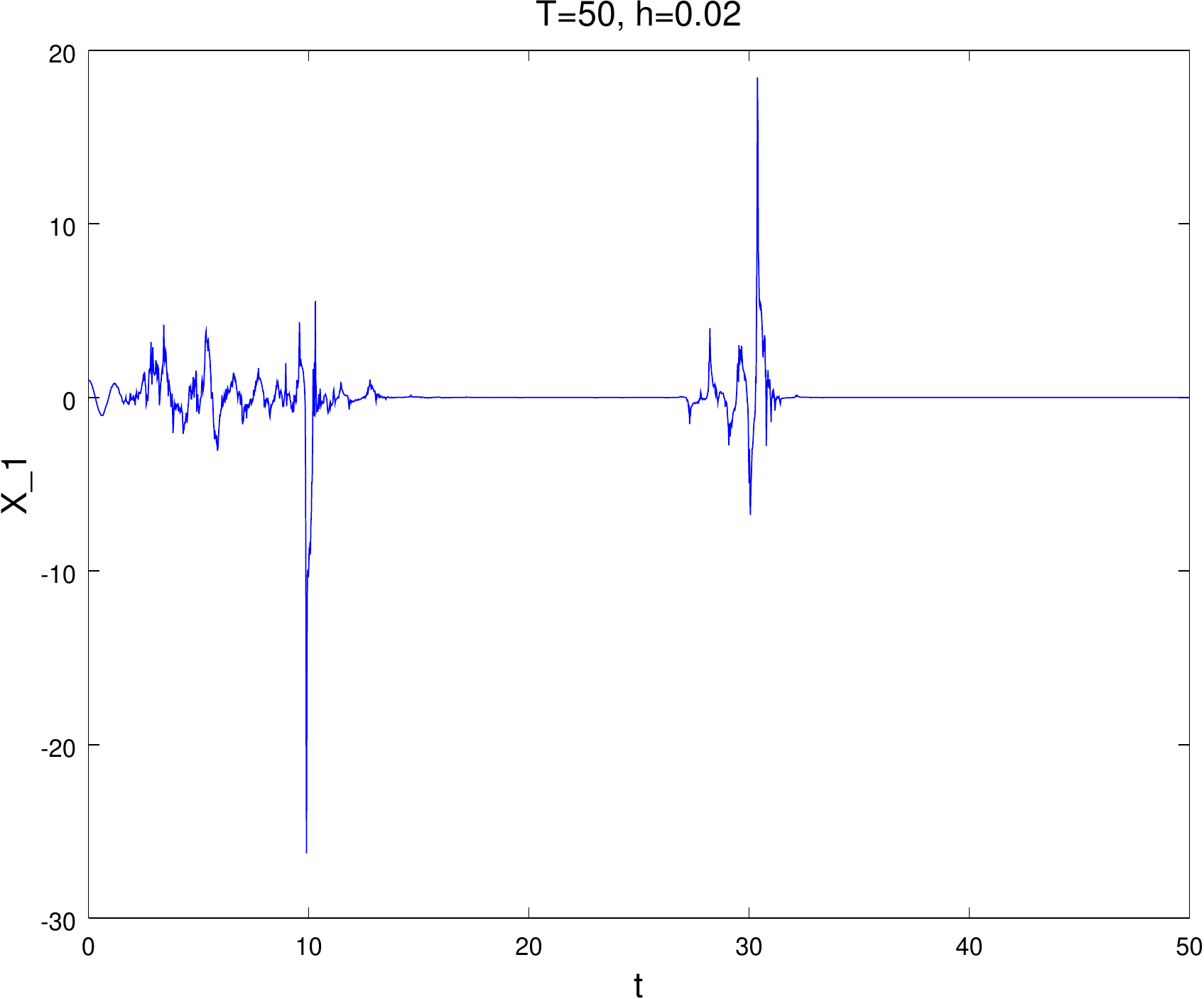}
                \caption{SSAMM method}
        \end{subfigure}
	\qquad
		\begin{subfigure}[t]{0.45\textwidth}
                \centering
                \includegraphics[width=\textwidth]{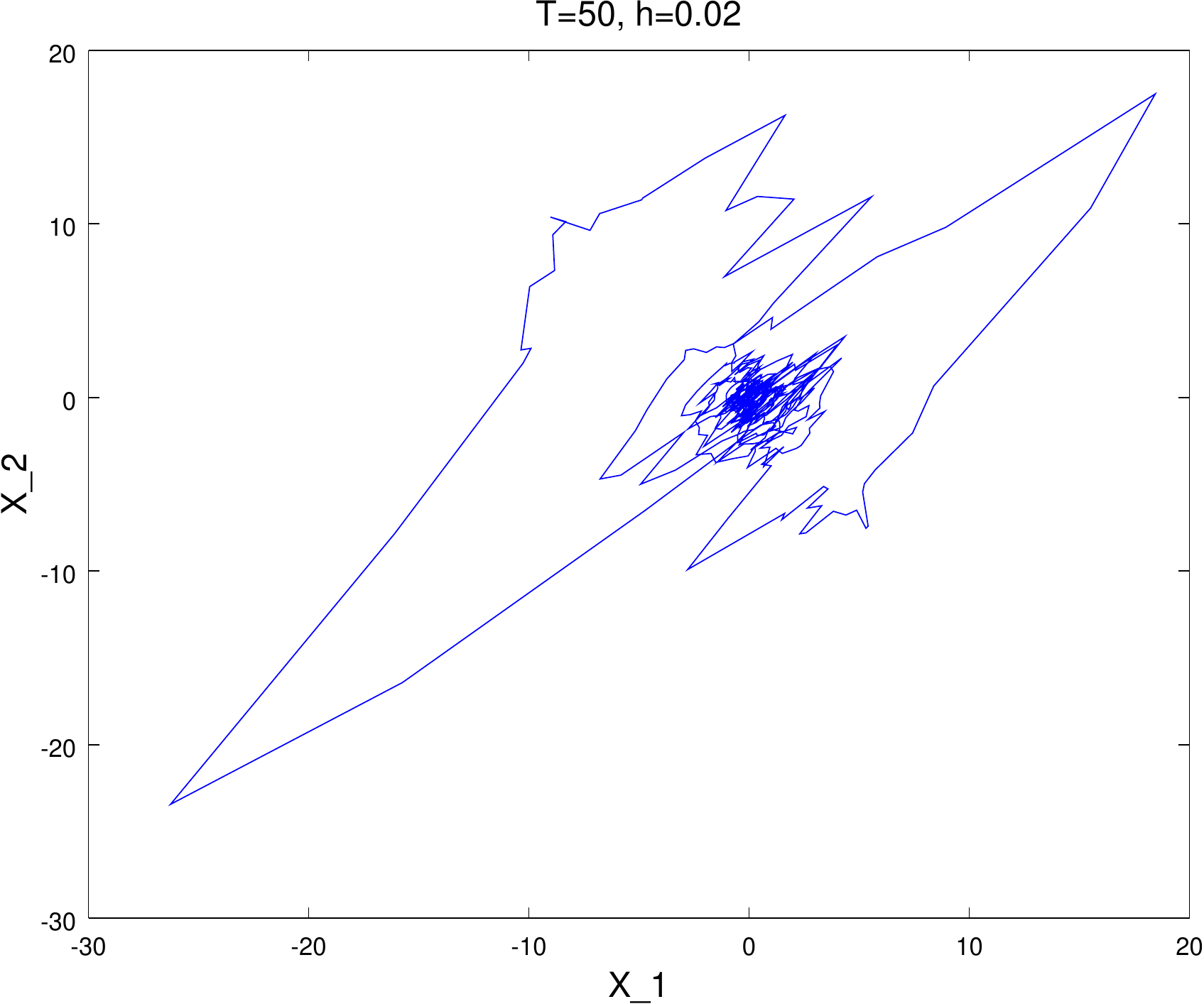}
                \caption{SSAMM method}
        \end{subfigure}
        \caption{Numerical simulation of SDE \eqref{eq:ex3a_1}}
        \label{fig:7}
\end{figure}

\vspace{5mm}
\textbf{Example 3. Application problem: chemical {L}angevin equations.}

When in the well-stirred system of $d$ biochemical species $S_1,...,S_d$ interact at a constant temperature through $m$ chemical reactions $R_1,...,R_m$ inside some fixed volume $\Omega$ in the regime which is far from thermodynamic limit, the state of the system can be modeled by chemical Langevin equations (CLE) \cite{Gillespie2000}.

We consider the following 3-species 6-reaction system \cite{Sotiropoulos2008}

\begin{align*}
	S_1 + S_2 \overset{c_1}{\rightarrow} S_3; \qquad S_1 + S_2 \overset{c_2}{\leftarrow} S_3;
	\\
	S_1 + S_3 \overset{c_3}{\rightarrow} S_2; \qquad S_1 + S_3 \overset{c_4}{\leftarrow} S_2;
	\\
	S_2 + S_3 \overset{c_5}{\rightarrow} S_1; \qquad S_2 + S_3 \overset{c_6}{\leftarrow} S_1
\end{align*}
with reaction rate constants $c_1 = 10^3$, $c_2 = 10^3$, $c_3 = 10^{-5}$, $c_4=10$, $c_5=1$ and $c_6=10^6$. The initial conditions are $X_1(0) = 10^3$, $X_2(0) = 10^3$ and $X_3(0) = 10^6$.

This reaction network can be described by the following system of CLEs
\begin{align}\label{eq:ex3_1}
	d X_i(t)
	=
	\sum_{j=1}^6 \nu_{ij} \alpha_j(X(t)) dt
	+
	\sum_{j=1}^6 \nu_{ij} \sqrt{\alpha_j(X(t))} dW_j
	\qquad i = 1,2,3.
\end{align}

In the above system $X(t)=\Big( X_1(t), X_2(t), X_3(t) \Big)^T$ is a state vector and $X_i(t)$ denotes the number of molecules of species $S_i$ in the system at time $t$. $\nu_{ij}$ is a stoichiometric matrix with state-change vectors as columns

\begin{align*}
	\nu = 
	\begin{pmatrix}
		-1 & 1 & -1 & 1 & 1 & -1 \\[0.5em]
		-1 & 1 & 1 & -1 & -1 & 1 \\[0.5em]
		1 & -1 & -1 & 1 & -1 & 1 \\
	\end{pmatrix}.
\end{align*}
Each state-change vector shows the change in the vector of molecular population induced by a single occurrence of a particular reaction.

The propensities $\alpha_j$ of the reaction channels are, respectively,

\begin{alignat*}{2}
	&\alpha_1(X(t)) = c_1 X_1(t) X_2(t), \quad & \alpha_2(X(t)) = c_2 X_3(t), & \\
	&\alpha_3(X(t)) = c_3 X_1(t) X_3(t), \quad & \alpha_4(X(t)) = c_4 X_2(t), & \\
	&\alpha_5(X(t)) = c_5 X_2(t) X_3(t), \quad & \alpha_6(X(t)) = c_6 X_1(t). & 
\end{alignat*}
The product of a propensity function with $dt$ gives the probability that a particular reaction will occur in the next infinitesimal time $dt$.
This system represents an inherently stiff nonlinear system of CLEs, since difference in the scales between propensities has several orders of magnitude. 

The system in \eqref{eq:ex3_1} was simulated over the time interval $[0,0.01]$. The tolerance of Newton iterations during the simulation was set to $10^{-6}$. 
Results of the simulation for different number of sample paths can be seen in Figure \ref{fig:8}. We notice that solution oscillates around the initial state of the system which replicates correct behavior of the considered chemical network with given initial state and reaction rate parameters.

The step size has to be chosen carefully since it can destabilize equilibrium of the numerical solution. For instance, for the considered methods it must be smaller than $10^{-4.5} -- 10^{-5}$. It has to be noted that the explicit Milstein method cannot generate stable solutions of the system \eqref{eq:ex3_1} for $h>1.2 \cdot 10^{-6}$.

Figure \ref{fig:9} depicts error versus CPU time. It is seen that the computational effort of algorithms grows linearly with respect to the desired tolerance of the solution. Also one can see that modified methods are more expensive than regular methods. Moreover, in the case of the Langevian system in \eqref{eq:ex3_1} we cannot benefit from the better stability properties of the modified methods because bigger step sizes can destabilize equilibrium of the system.

Additional attention must be paid to positivity preserving of the numerical scheme applied to the system \eqref{eq:ex3_1} since the number of species cannot be negative. In fact this requirement introduces an even more severe restriction on the time step than it does stiffness. Results reported in \cite{Sotiropoulos2008} show that in order to preserve positivity of the solution using the explicit Milstein scheme time step cannot exceed $5 \times 10^{-7}$. 
To guarantee positivity of the method we took absolute absolute values in the square roots, which is one of the standard tricks in simulating CLEs. A brief look at the averaged solution (Figure \ref{fig:8}) exhibits the acceptability of this approach.

\begin{figure}[p]
	\centering
        \begin{subfigure}[t]{0.45\textwidth}
                \centering
                \includegraphics[width=\textwidth]{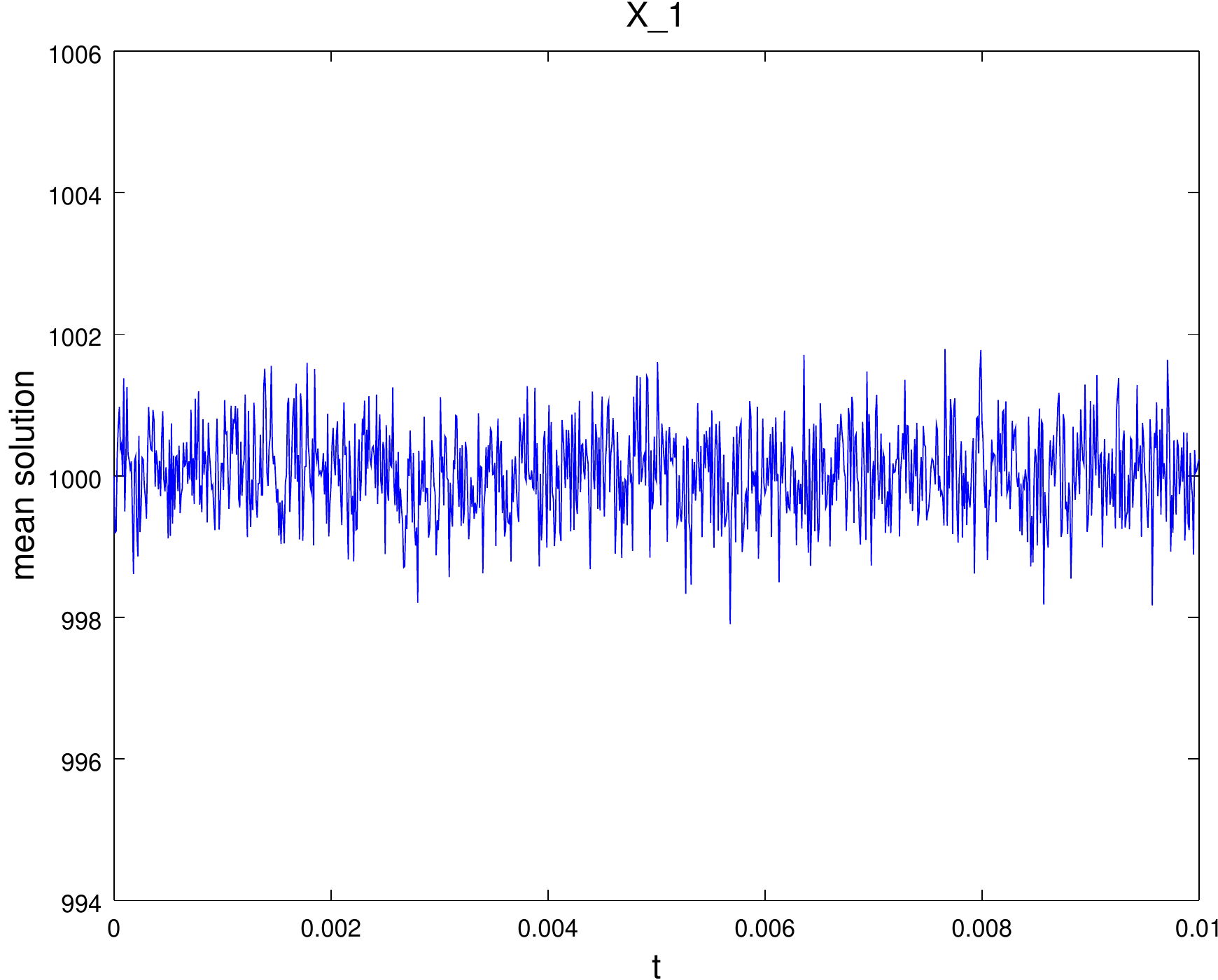}
                \caption{Mean solution for $X_1$}
        \end{subfigure}
	\qquad
		\begin{subfigure}[t]{0.45\textwidth}
                \centering
                \includegraphics[width=\textwidth]{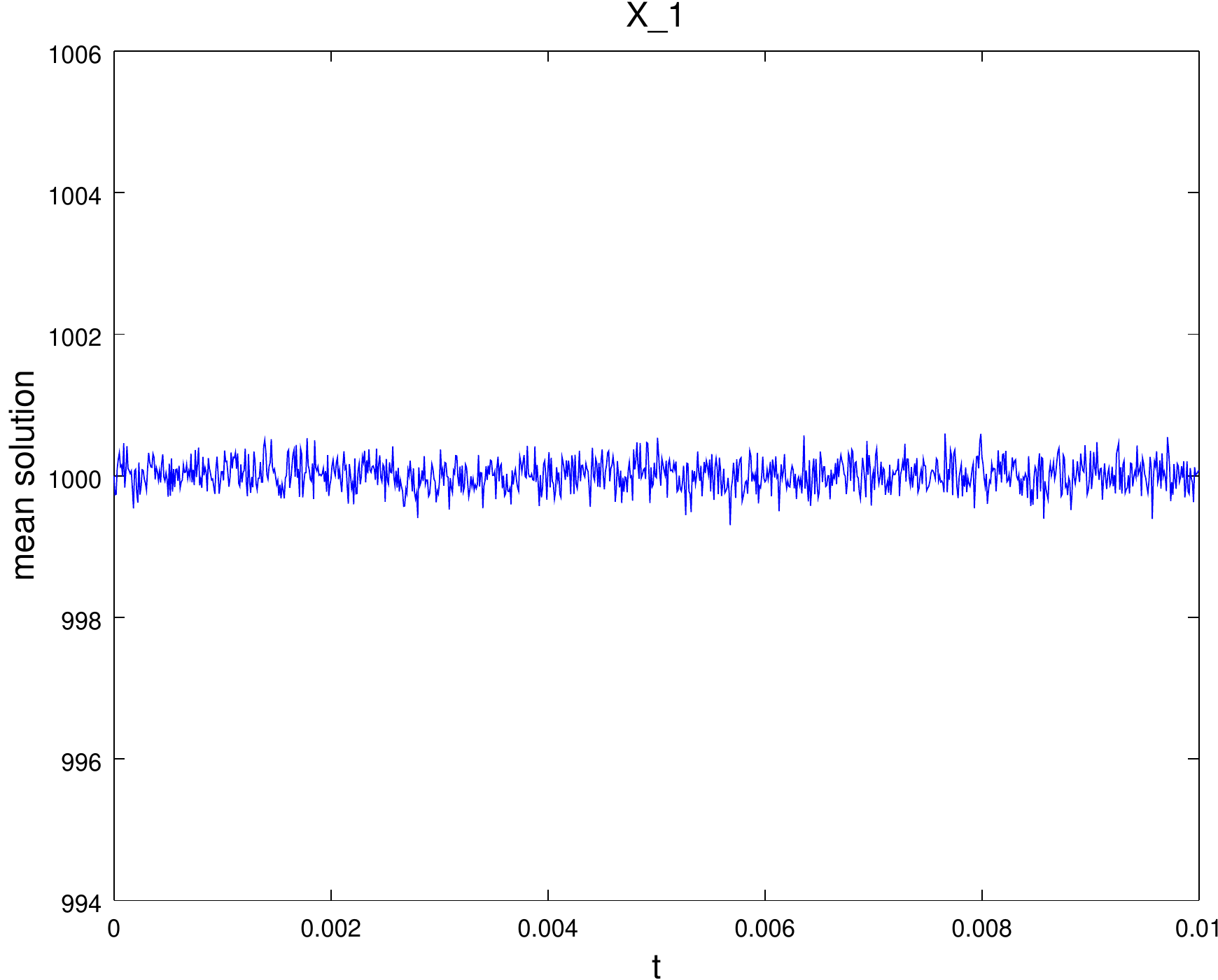}
                \caption{Mean solution for $X_2$}
        \end{subfigure}
	\qquad
		\begin{subfigure}[t]{0.45\textwidth}
                \centering
                \includegraphics[width=\textwidth]{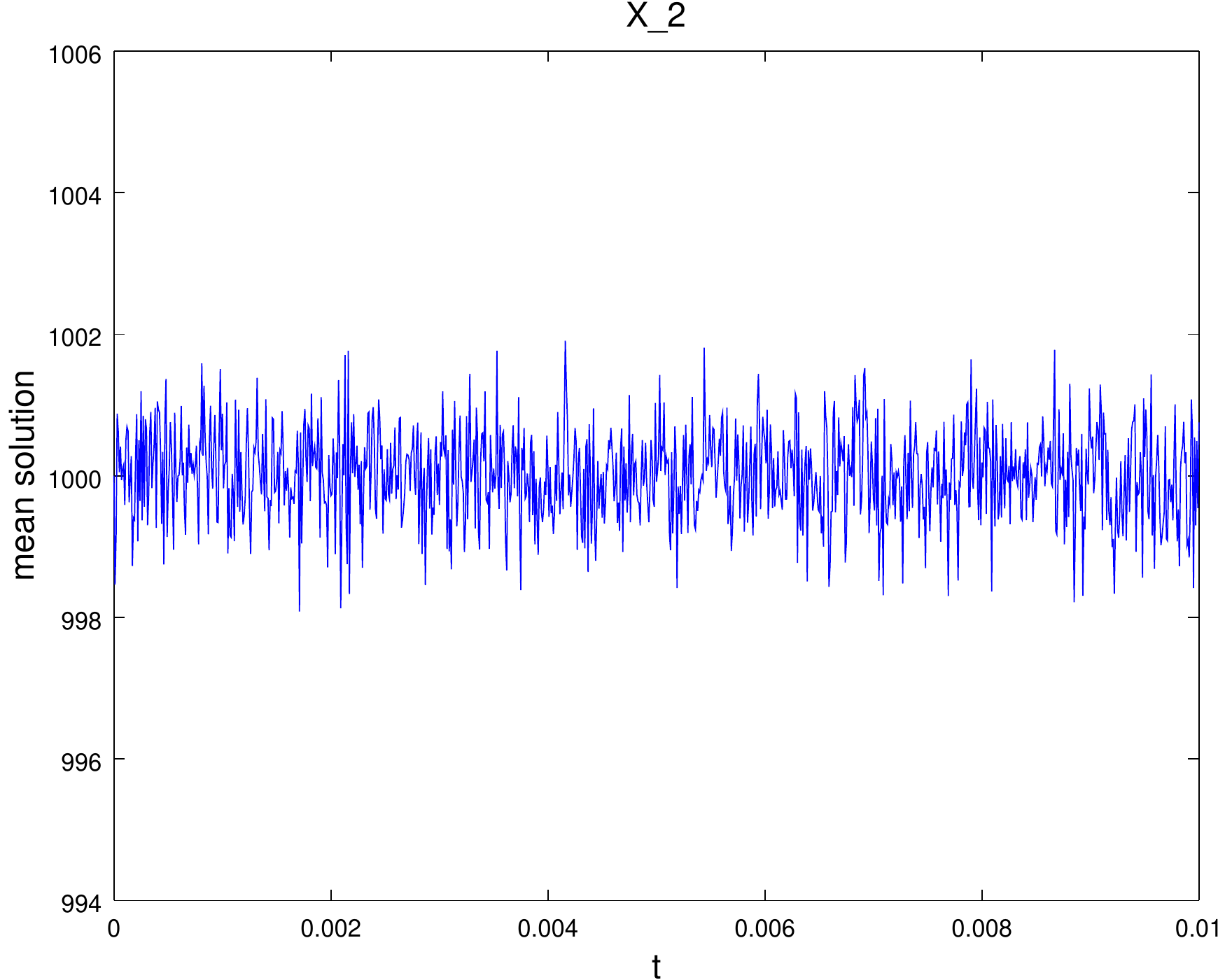}
                \caption{Mean solution for $X_2$}
        \end{subfigure}
	\qquad
		\begin{subfigure}[t]{0.45\textwidth}
                \centering
                \includegraphics[width=\textwidth]{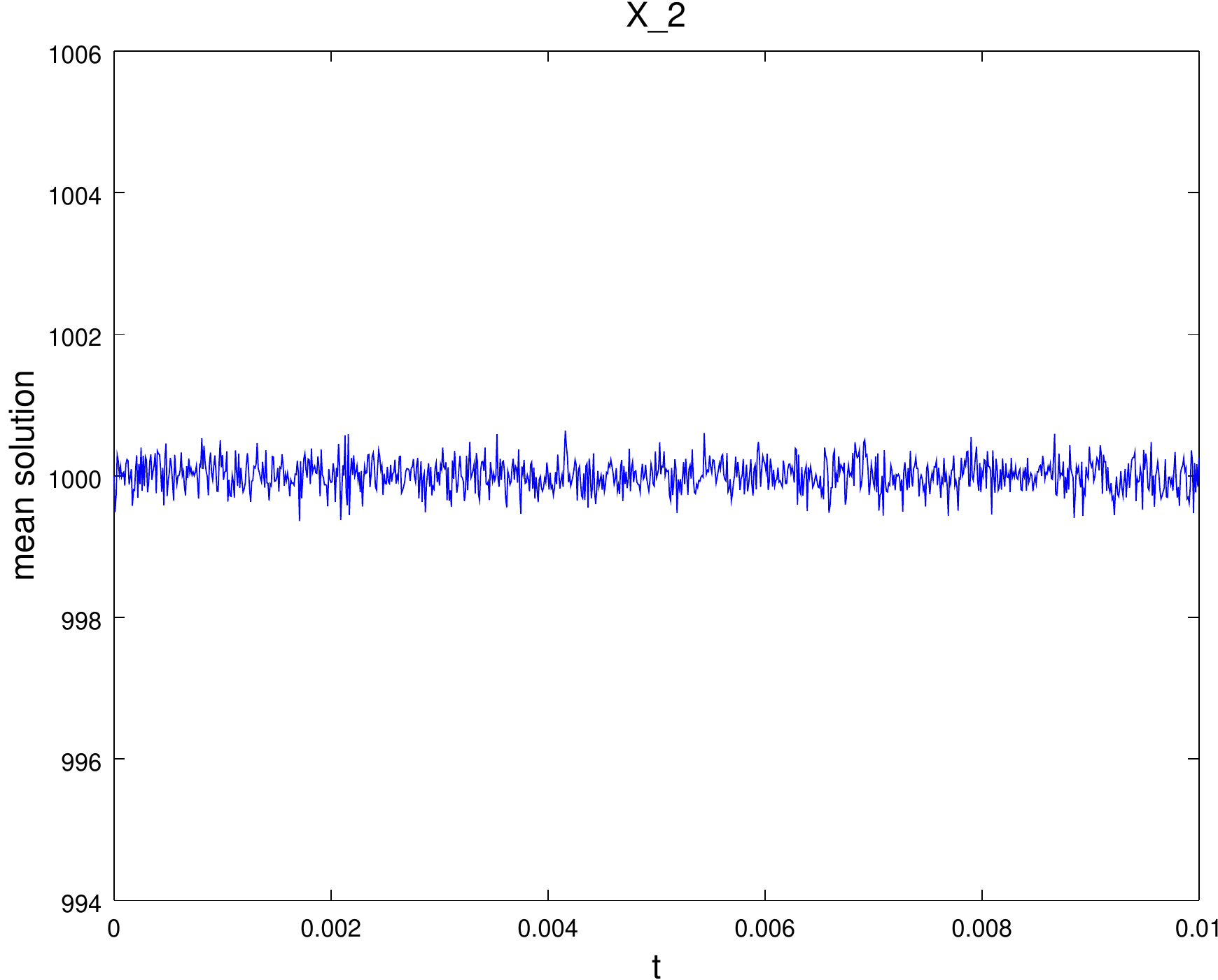}
                \caption{Mean solution for $X_2$}
        \end{subfigure}
	\qquad
		\begin{subfigure}[t]{0.45\textwidth}
                \centering
                \includegraphics[width=\textwidth]{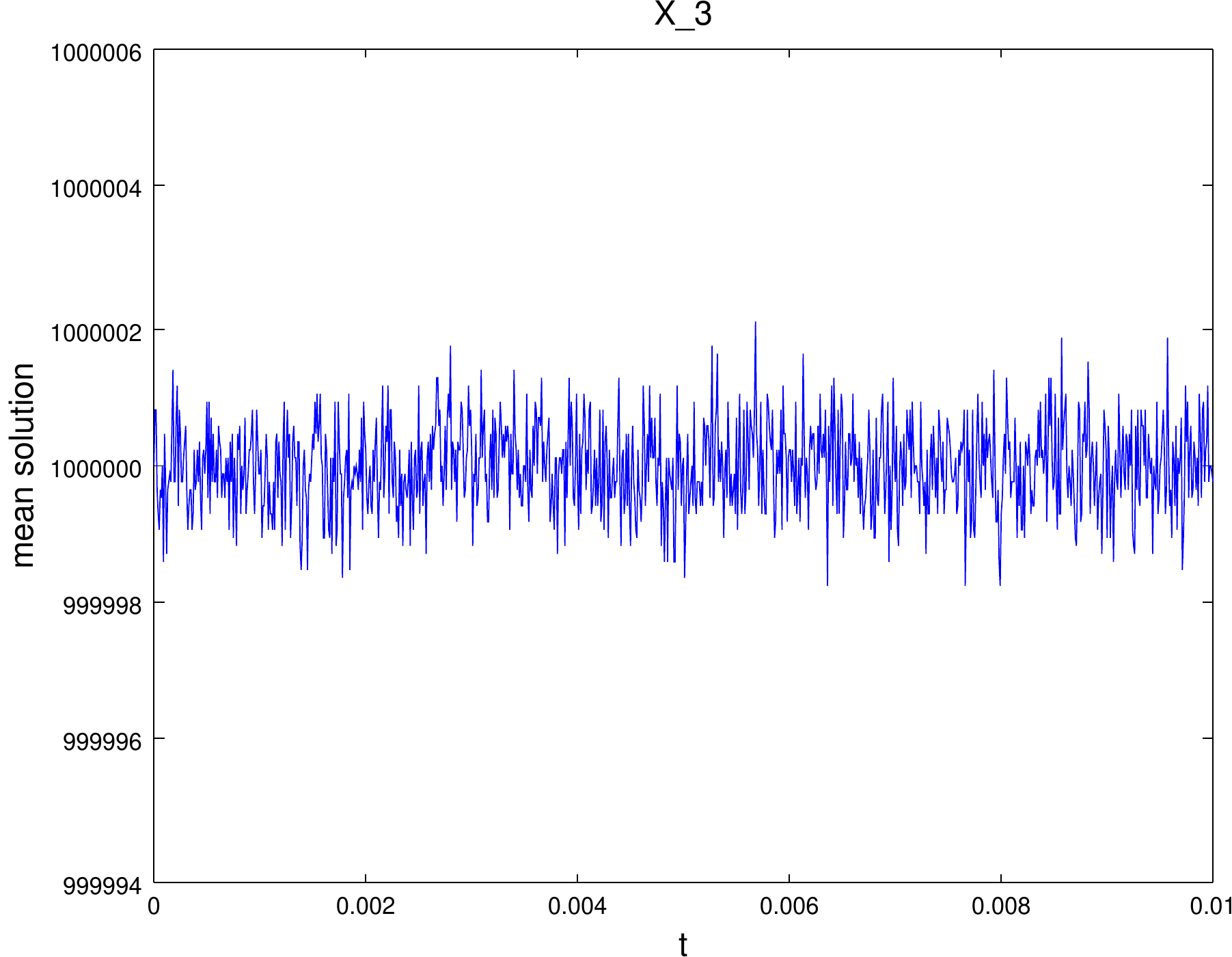}
                \caption{Mean solution for $X_3$}
        \end{subfigure}
	\qquad
		\begin{subfigure}[t]{0.45\textwidth}
                \centering
                \includegraphics[width=\textwidth]{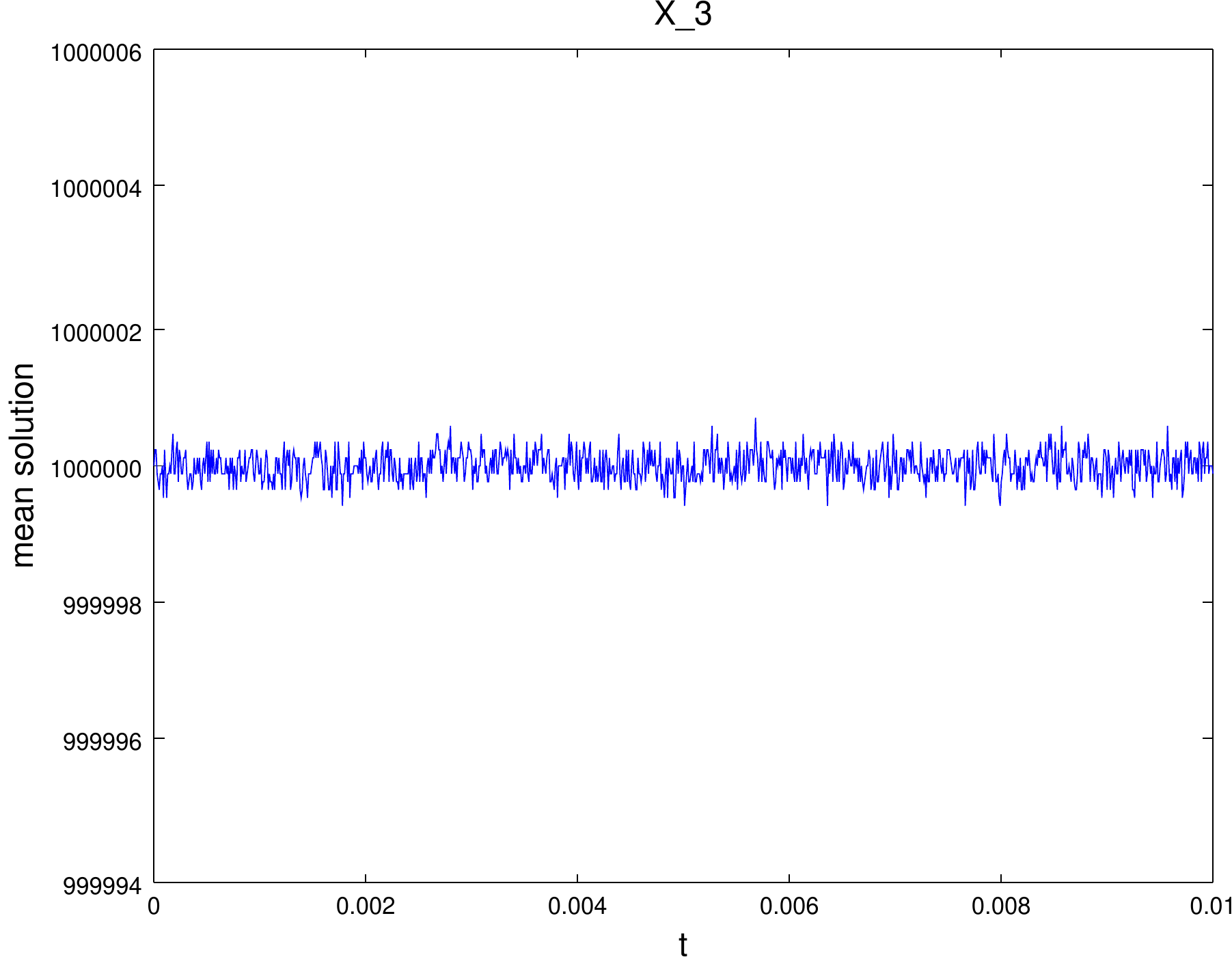}
                \caption{Mean solution for $X_3$}
        \end{subfigure}
        \caption{Mean solutions of the equation \eqref{eq:ex3_1} for $10^5$ (on the left) and $10^6$ (on the right) computed samples}
        \label{fig:8}
\end{figure}

\begin{figure}[h]
	\centering
    \includegraphics[width=0.5\textwidth]{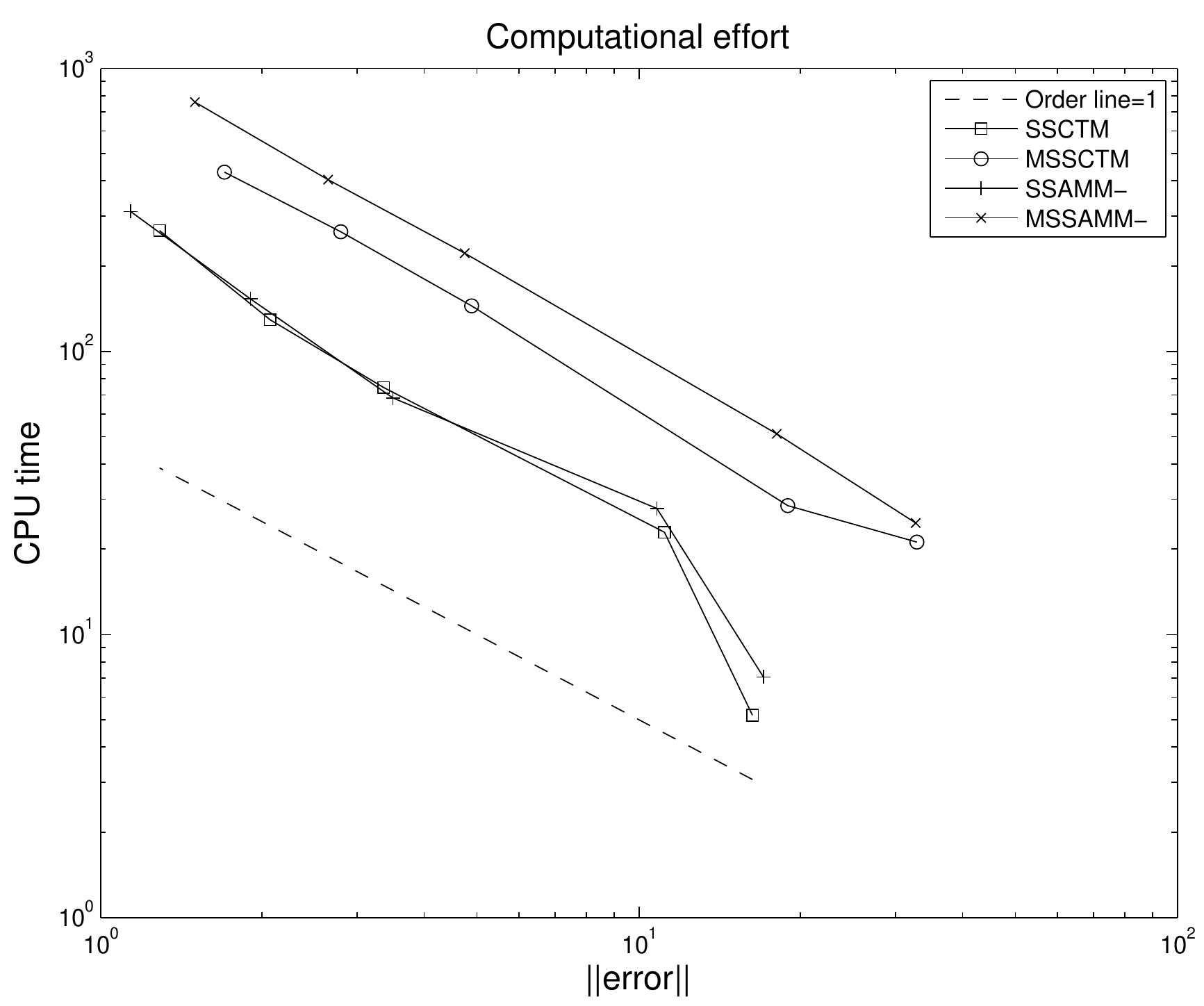}
    \caption{Computational cost of the equation \eqref{eq:ex3_1}} 
    \label{fig:9}
\end{figure}

\vspace{10mm}

\section{Conclusions.} 
 In this paper, we have investigated split-step Milstein methods for the solution of stiff stochastic differential equations driven by multi-channel noise. We focused on some of the features which are prominent in dealing with systems of SDEs and cannot be captured by scalar stability analysis. The dimension of the noise and geometry of interaction between drift and diffusion components are among the most important factors which influence the numerical stability of stochastic systems. The proposed numerical methods were tested on several problems discussed in the literature, each of these problems has a different nature of stochastic challenges. A linear problem with commutative noise was used as a benchmark for testing the convergence of the methods. From that problem, we also found that the dimension of the noise is a very strong constraint on the stability of the methods. We can make a conjecture that it may be even a stronger constraint  on the mean-square stability of the methods than noise geometry. 
 %When split-step methods were tested on the stochastic system of SDEs, SSAMM outperformed the rest of the methods.  
 We have also tested the effectiveness of the methods on a stiff stochastic system of chemical Langevian equations. In addition, we found that for a large number of channels, the computational cost in implementing the modified spit-step methods is  higher than the gain in the stability.

\section{Acknowledgment.}
We appreciate the valuable comments of referees. This helped to improve the quality of the paper.

%
%We have provided explicit structure of stability matrices of split-step Milstein methods for general form of linear systems of SDEs and thoroughly investigated stability properties of these methods.
%
%
% which are inherent to systems rather then scalar equations. of equations and do not      which have led us to investigate the stability analysis of the system.

%% The Appendices part is started with the command \appendix;
%% appendix sections are then done as normal sections
\appendix

\section{ }
\label{appendix-sec1}

In the general multi-dimensional case the Milstein scheme takes the form \cite{Kloeden1992}

\begin{align}\label{app:1}
	\begin{bmatrix}
		y^1_{n+1} \\
		\vdots \\
		y^d_{n+1}
	\end{bmatrix}
	=
	\begin{bmatrix}
		y^1_{n} \\
		\vdots \\
		y^d_{n}
	\end{bmatrix}
    + 
    h 
    \begin{bmatrix}
		f^1 \\
		\vdots \\
		f^d
	\end{bmatrix}
	+ 
	\sum_{j=1}^m 
	\begin{bmatrix}
		g^1_j \\
		\vdots \\
		g^d_j
	\end{bmatrix}	
	\Delta W_j 
	+ 
	\sum_{j_1=1}^m \sum_{j_2=1}^m \sum_{k=1}^d 
	\frac{\partial}{\partial y^k}
	\begin{bmatrix}
		g^1_{j_2} \\
		\vdots \\
		g^d_{j_2}
	\end{bmatrix}	
	g_{j_1}^k  I_{(j_1,j_2)}
\end{align}

Using vector-matrix notation the above expression can be rewritten in more compact and computationally convenient form as

\begin{align}\label{app:2}
    X_{n+1} &= X_n + h f + g \Delta W + \sum_{j=1}^d \frac{\partial g}{\partial y^k} (B^j)^T
\end{align}
where $y, f \in \mathbb{R}^d$ are solution and drift vectors respectively, $g \in \mathbb{R}^{d \times m}$ is a diffusion matrix, $\Delta W \in \mathbb{R}^m$ is an $m$-dimensional vector of Wiener increments and $B = g I \in \mathbb{R}^{d \times m}$ is a product of $d \times m$ diffusion matrix $g$ and $m \times m$ matrix of double stochastic integrals $I$. $B^j$ denotes the $j$-th row of matrix $B$.

In expanded form this reads as 

\begin{align}\label{app:3}
	\begin{bmatrix}
		y^1_{n+1} \\
		\vdots \\
		y^d_{n+1}
	\end{bmatrix}
	=
	\begin{bmatrix}
		y^1_{n} \\
		\vdots \\
		y^d_{n}
	\end{bmatrix}
    + 
    h 
    \begin{bmatrix}
		f^1 \\
		\vdots \\
		f^d
	\end{bmatrix}
	+ 
	\begin{bmatrix}
		g^1_1 & \hdots & g^1_m \\
		\vdots & \ddots & \vdots \\
		g^d_1 & \hdots & g^d_m \\
	\end{bmatrix}	
    \begin{bmatrix}
		\Delta W_1  \\
		\vdots \\
		\Delta W_m
	\end{bmatrix}
	+ 
	\sum_{j=1}^d 
	\begin{bmatrix}
		\frac{\partial g^1_1}{\partial y^j} & \hdots & \frac{\partial g^1_m}{\partial y^j} \\
		\vdots & \ddots & \vdots \\
		\frac{\partial g^d_1}{\partial y^j} & \hdots & \frac{\partial g^d_m}{\partial y^j} \\
	\end{bmatrix}	
	\begin{bmatrix}
		b^j_{1} \\
		\vdots \\
		b^j_{m}
	\end{bmatrix}	
\end{align}

To derive representation in \eqref{app:3} let take a look at the last term of equation \eqref{app:1}

\begin{align}
	& \sum_{j_1=1}^m \sum_{j_2=1}^m \sum_{k=1}^d g_{j_1}^k \frac{\partial g_{j_2}^i(t_n,X_n)}{\partial y^k} I_{(j_1,j_2)}
	= 
	\sum_{j_2=1}^m \sum_{k=1}^d \frac{\partial g_{j_2}^i}{\partial y^k} \underbrace{\sum_{j_1=1}^m g_{j_1}^k I_{(j_1,j_2)}}_{B=B_{j_2}^k=g \cdot I}
\end{align}

If $J_{g^i} = (J_{g^i})_k^{j_2}$ is used to denote the Jacobian matrix of $i$-th row of matrix $g$ then the above identity converts to 

\begin{align}\label{app:4}
	\sum_{j_2=1}^m \sum_{k=1}^d \frac{\partial g_{j_2}^i}{\partial y^k} \underbrace{\sum_{j_1=1}^m g_{j_1}^k I_{(j_1,j_2)}}_{B=B_{j_2}^k=g \cdot I}
	=
	\sum_{j_2=1}^m \sum_{k=1}^d (J_{g^i})_k^{j_2} B_{j_2}^k = Tr(J_{g^i} \cdot B)
	=
	\left[ vec(J_{g^i}^T) \right]^T \cdot vec(B)
\end{align}

Then we obtain

\begin{align}
	\sum_{j_1=1}^m \sum_{j_2=1}^m \sum_{k=1}^d 
	\frac{\partial}{\partial y^k}
	\begin{bmatrix}
		g^1_{j_2} \\
		\vdots \\
		g^d_{j_2}
	\end{bmatrix}	
	g_{j_1}^k  I_{(j_1,j_2)}
	=
	\begin{bmatrix}
		\left[ vec(J_{g^1}^T) \right]^T \\
		\vdots \\
		\left[ vec(J_{g^n}^T) \right]^T \\
	\end{bmatrix}	
	\cdot vec(B)
	=
	\sum_{j=1}^m J_{g_j} B_j
	=
	\sum_{j=1}^d \frac{\partial g}{\partial y^k} (B^j)^T
\end{align}
where $J_{g_j}$ denotes the Jacobian matrix of $j$-th column of matrix $g$ and $B_j$ is the $j$-th column of matrix $B$.

For efficient calculation of iterated stochastic integrals $I_{(j_1,j_2)}$ we refer to \cite{wiktorsson2001,Gilsing2007}.

%\begin{align*}
%    X_{n+1} &= X_n + h f(t_n,X_n) + \sum_{j=1}^m g_j(t_n,X_n) \Delta W_j + \sum_{j_1=1}^m \sum_{j_2=1}^m \sum_{k=1}^d g_{j_1}^k \frac{\partial g_{j_2}(t_n,X_n)}{\partial y^k} I_{(j_1,j_2)}
%\end{align*}
%where $y, f \in \mathbb{R}^d$ are solution and drift vectors respectively, $g \in \mathbb{R}^{d \times m}$ is a diffusion matrix, $\Delta W \in \mathbb{R}^m$ is an $m$-dimensional vector of Wiener increments and $I \in \mathbb{R}^{m \times m}$ is a matrix of double stochastic integrals.

%% References
%%
%% Following citation commands can be used in the body text:
%% Usage of \cite is as follows:
%%   \cite{key}         ==>>  [#]
%%   \cite[chap. 2]{key} ==>> [#, chap. 2]
%%

%% References with bibTeX database:

% \bibliographystyle{elsarticle-num}
% \bibliographystyle{elsarticle-harv}
% \bibliographystyle{elsarticle-num-names}
% \bibliographystyle{model1a-num-names}
\bibliographystyle{model1b-num-names}

\bibliography{DV-AK-VR-Split-step-2014}

\end{document}